\theoremstyle{plain}
\newtheorem{theorem}{Theorem}[section]
\newtheorem{corollary}[theorem]{Corollary}
\newtheorem{lemma}[theorem]{Lemma}
\newtheorem{proposition}[theorem]{Proposition}
\theoremstyle{definition}
\newtheorem{remark}[theorem]{Remark}
\numberwithin{equation}{section}
\DeclareMathOperator{\ch}{ch}
\DeclareMathOperator{\GL}{GL}
\newcommand{\GLq}[2][q]{\GL_{{#2}}(\mathbf{F}_{{#1}})}
\DeclareMathOperator{\Sp}{Sp}
\newcommand{\Spq}[2][q]{\Sp_{{#2}}(\mathbf{F}_{{#1}})}
\DeclareMathOperator{\U}{U}
\DeclareMathOperator{\Tr}{Tr}
\DeclareMathOperator{\sgn}{sgn}
\DeclareMathOperator{\Ind}{Ind}
\DeclareMathOperator{\Res}{Res}
\DeclareMathOperator{\Av}{Av}
\title[A characteristic map for $\GLq{2n}/\Spq{2n}$]{	
A characteristic map for the symmetric space of symplectic forms over a finite field
}
\author{Jimmy He}
\address{Department of Mathematics, Stanford University, Stanford, CA  94305}
\email{jimmyhe@stanford.edu}
\begin{document}
\begin{abstract}
The characteristic map for the symmetric group is an isomorphism relating the representation theory of the symmetric group to symmetric functions. An analogous isomorphism is constructed for the symmetric space of symplectic forms over a finite field, with the spherical functions being sent to Macdonald polynomials with parameters $(q,q^2)$. An analogue of parabolic induction is interpreted as a certain multiplication of symmetric functions. Applications are given to Schur-positivity of skew Macdonald polynomials with parameters $(q,q^2)$ as well as combinatorial formulas for spherical function values.
\end{abstract}
\maketitle
\section{Introduction}
\subsection{Motivation}
There is a close connection between the representation theory of certain groups and symmetric functions, of which the most well-known and classical is for the symmetric group. In particular, an isomorphism between the class functions on the symmetric groups and the ring of symmetric functions can be constructed. Here the graded multiplication is given by Young induction, and the irreducible characters are sent to the Schur functions.

In Macdonald's classic book on symmetric functions \cite{M95}, two extensions of this are given, one for $\GLq{n}$ (originally due to Green \cite{G55}), and one for the Gelfand pair $S_{2n}/B_n$, where $B_n$ denotes the hyperoctahedral group (originally due to Stembridge \cite{S92}, although a similar connection to the symmetric space $\GL(n,\mathbf{R})/\operatorname{O}(n,\mathbf{R})$ was noticed by James \cite{J61}). A characteristic map can be constructed in both cases and they have applications to computing character and spherical function values.

This paper develops an analogous theory for $\GLq{2n}/\Spq{2n}$ and some applications to Macdonald polynomials are given. The symmetric space $\GLq{2n}/\Spq{2n}$ is a natural $q$-analogue of the Gelfand pair $S_{2n}/B_n$; the former can be seen as the Weyl group version of the latter. Work of Bannai, Kawanaka and Song \cite{BKS90} gave a formula for the spherical functions in terms of so-called basic functions, which are the analogues of the Deligne-Lusztig characters in this setting. Already, coefficients related to the Macdonald polynomials with parameters $(q,q^2)$ appeared, and so it is natural to seek an analogue of the characteristic map for $\GLq{2n}/\Spq{2n}$.

The original motivation to seek this construction was to analyze a random walk on the symmetric space by seeking a combinatorial formula for the spherical functions. A more probabilistic proof was subsequently found and the analysis of the Markov chain can be found in \cite{H19}.

\subsection{Main results}
The main contribution is to construct a characteristic map
\begin{equation*}
    \ch:\bigoplus_n \mathbf{C}[\Spq{2n}\backslash \GLq{2n}/\Spq{2n}]\to \bigotimes \Lambda
\end{equation*}
from the space of bi-invariant functions to a ring of symmetric functions and establish some basic properties. In particular, the spherical functions are shown to map to Macdonald polynomials with parameters $(q,q^2)$. Unfortunately, the multiplication on the ring of symmetric functions does not seem to have an easy representation-theoretic interpretation, and in particular is not given by parabolic induction in the sense defined by Grojnowski \cite{G92} (a previous version of this paper claimed to show this). Nevertheless, a "mixed product" can be defined using parabolic induction which takes a bi-invariant function on $\GLq{2n}$ and a class function on $\GLq{m}$ and produces a bi-invariant function on $\GLq{2(n+m)}$ and this (almost) corresponds to the product of symmetric functions under the characteristic map. It would be interesting to find a representation-theoretic interpretation of symmetric function multiplication under the characteristic map.

This result should be seen as an application of the results in \cite{BKS90} and \cite{H01t} to develop a framework to translate between the representation theory of $\GLq{2n}/\Spq{2n}$ and the theory of symmetric functions. As examples of this, two applications are given.

The first is an application to Schur-positivity of certain (unmodified) Macdonald polynomials. If $P_{\lambda/\mu}(x;q,t)$ denotes the skew Macdonald polynomials, let $C_{\lambda/\mu}^\nu(q,t)$ be defined by
\begin{equation*}
    C_{\lambda/\mu}^\nu(q,t)=\langle P_{\lambda/\mu}(q,t), s_\nu\rangle
\end{equation*}
where the inner product is the standard Hall inner product. These are the coefficients of the expansion of $P_{\lambda/\mu}$ in terms of Schur functions. They can be considered a deformed Littlewood-Richardson coefficient, which can be defined as $\langle s_{\lambda/\mu},s_\nu\rangle$.

The following theorem is proven using the characteristic map.
\begin{theorem}
\label{thm: positivity}
Let $q=p^e$ denote an odd prime power. Then for any partitions $\mu,\nu,\lambda$
\begin{equation*}
    C_{\lambda/\mu}^\nu(q,q^2)\geq 0.
\end{equation*}
\end{theorem}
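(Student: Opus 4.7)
The plan is to interpret $C^\nu_{\lambda/\mu}(q,q^2)$ as a positive multiple of the multiplicity of an irreducible unipotent character in a genuine class function of $\GLq{|\lambda|-|\mu|}$, which is automatically nonnegative. By the characteristic map of this paper, $P_\lambda(q,q^2)$ is the image of the spherical function $\omega^\lambda$; by Green's characteristic map, $s_\nu$ is the image of the irreducible unipotent character $\chi^\nu$ of $\GLq{|\nu|}$, and this map is an isometry identifying the Hall inner product with the standard character inner product. It thus suffices to realize $P_{\lambda/\mu}(q,q^2)$ as the Green-characteristic image of a class function $\xi^\lambda_\mu$ on $\GLq{|\lambda|-|\mu|}$ whose unipotent-character expansion has nonnegative coefficients.

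To produce $\xi^\lambda_\mu$, I would invoke Frobenius reciprocity for the mixed product. Since the mixed product is a form of parabolic induction, its adjoint $R$ is a restriction operator on bi-invariant functions, and $R(\omega^\lambda)$ should decompose as $\sum_\mu \omega^\mu \otimes \xi^\lambda_\mu$. Under the characteristic maps this must match the symmetric-function coproduct $\Delta P_\lambda(q,q^2) = \sum_\mu P_\mu(q,q^2) \otimes P_{\lambda/\mu}(q,q^2)$, forcing $\ch_{\text{Green}}(\xi^\lambda_\mu) = P_{\lambda/\mu}(q,q^2)$ up to a positive normalization. Since the mixed product is parabolic induction from a genuine representation, the adjoint restriction yields $\xi^\lambda_\mu$ as a positive multiple of the character of a genuine $\GLq{|\lambda|-|\mu|}$-representation --- specifically, a multiplicity space of $V_\mu$ inside a parabolic restriction of the $\GLq{2(n+m)}$-representation $V_\lambda$, pulled back along the Levi embedding $\GLq{|\lambda|-|\mu|} \hookrightarrow \Spq{2(|\lambda|-|\mu|)}$ underlying the mixed product. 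Multiplicities of irreducibles in $\xi^\lambda_\mu$ are therefore nonnegative rationals, giving $C^\nu_{\lambda/\mu}(q,q^2) \geq 0$.

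The main obstacle is the precise construction of the restriction operator $R$ and the identification $\ch_{\text{Green}}(\xi^\lambda_\mu) = P_{\lambda/\mu}(q,q^2)$ with correct tracking of normalization constants (which must remain positive throughout for the argument to yield nonnegativity rather than merely rationality). The paper cautions that the mixed product only ``almost'' corresponds to multiplication of symmetric functions under the characteristic map, so care with normalizations is essential. The asymmetric setup --- bi-invariant function on $\GLq{2n}$ paired with a class function on $\GLq{m}$ rather than $\GLq{2m}$ --- makes $R$ nonstandard, requiring a careful combination of parabolic restriction and pushforward along $\GLq{m} \hookrightarrow \Spq{2m}$.
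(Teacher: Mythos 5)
Your reduction of $C^\nu_{\lambda/\mu}(q,q^2)$ to a pairing on the symmetric-space side is sound and, by adjointness of the mixed product and its restriction, is equivalent to the paper's starting point $\langle \phi_\lambda,\phi_\mu\ast\chi_\nu\rangle$ (obtained from Proposition \ref{prop: ch of spherical func}, Theorem \ref{thm: multiplicativity} and the skewing identity $\langle P_{\lambda/\mu},f\rangle=\langle P_\lambda,Q_\mu f\rangle$). The genuine gap is in your positivity mechanism. You assert that $\xi^\lambda_\mu$ is a positive multiple of the character of an actual representation, realized as ``a multiplicity space of $V_\mu$ inside a parabolic restriction of $V_\lambda$.'' But $\phi_\lambda$ and $\phi_\mu$ are spherical functions, i.e.\ averages of $\chi_{\lambda\cup\lambda}$ and $\chi_{\mu\cup\mu}$ over the symplectic subgroup; they are not characters of $\GLq{2(n+m)}$ or of the Levi, and the bi-invariant restriction $\Res^{G/H}_{L\subseteq P}$ (averaging over $\Spq{2(n+m)}$ as well as summing over the unipotent fibre) is not restriction of representations. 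So there is no representation $V_\lambda$ to branch, and no multiplicity space whose character would be $\xi^\lambda_\mu$; the paper explicitly cautions that the relevant multiplication has no known interpretation via parabolic induction of representations. The claim that the coefficients of $\xi^\lambda_\mu$ on irreducible characters are nonnegative is precisely the content of the theorem, so as written the crucial step is circular.

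The missing idea is positive-definiteness, which is the correct substitute for ``being a character'' here. Spherical functions are positive-definite, and a positive-definite bi-invariant function on a Gelfand pair has nonnegative inner product with every spherical function (Proposition \ref{prop: spherical function positivity}); pullback and pushforward along group homomorphisms preserve positive-definiteness (Lemma \ref{lem: pos-def push-pull}); writing $\Res^{G/H}_{L\subseteq P}=pr_*\,i^*$ shows bi-invariant parabolic restriction preserves it, and hence by adjointness (Proposition \ref{prop: Ind Res adjoint}, noting the constant there is positive) so does induction. Applying this to the positive-definite function $\phi_\mu\times\chi_\nu$ on $L^F$ gives $\langle\phi_\lambda,\Ind^{G/H}_{L\subseteq P}(\phi_\mu\times\chi_\nu)\rangle\geq 0$, which closes the argument. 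Your concern about normalizations is legitimate but secondary: one only needs the overall scalar relating $C^\nu_{\lambda/\mu}(q,q^2)$ to this pairing to be positive, and this is a finite check using the explicit signs $(-1)^{|\lambda|}$, $(-1)^{|\mu|}$ in Proposition \ref{prop: ch of spherical func} against the signs of $c_\lambda(q,q^2)$ and $c'_\mu(q,q^2)$, together with the twist $\omega\omega_{q^2,q}$ in Theorem \ref{thm: multiplicativity} that mediates between $s_\nu$ and $\chi_\nu$.
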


This theorem is related to a conjecture of Haglund \cite{Y12}, which states that
\begin{equation*}
   \frac{\langle J_\lambda(q,q^k),s_\mu\rangle}{(1-q)^{|\lambda|}}\in \mathbf{N}[q]
\end{equation*}
and provides further evidence of this conjecture. In particular, taking $\mu=0$ in Theorem \ref{thm: positivity} shows 
\begin{equation*}
    \frac{\langle J_\lambda(q,q^2),s_\mu\rangle}{(1-q)^{|\lambda|}}\geq 0
\end{equation*}
when $q$ is an odd prime power.

The Schur expansion of integral Macdonald polynomials has been previously studied by Yoo who gave combinatorial formulas for the expansion coefficients in some special cases showing that they were polynomials in $q$ with positive integer coefficients \cite{Y12,Y15}. Some related coefficients in the Jack case when $q\to 1$ case was studied in \cite{AHW18}. The Schur-positivity proven is distinct from the well-known positivity result proven by Haiman \cite{H01b}, which deals with the modified Macdonald polynomials.

The second application is to combinatorial formulas for the values of spherical functions on certain double cosets. Although there are formulas in \cite{BKS90} already for all values of the spherical functions, they are alternating and so are unsuitable for asymptotic analysis. Asymptotic analysis of spherical functions appears in the study of Markov chains on Gelfand pairs, see \cite{D88, CST08} for examples. 

\subsection{Related work}
In addition to the examples already mentioned for $S_n$, $\GLq{n}$ and $S_{2n}/B_n$, there are further examples of characteristic maps appearing in the literature. In particular, Thiem and Vinroot constructed such a map for $\U_n(\mathbf{F}_{q^2})$ \cite{TV07} and in \cite{Metal12} a map is constructed between the supercharacters of the unipotent upper triangular matrices over a finite field and symmetric functions in non-commuting variables.

The connections between the representation theory of $\GLq{2n}/\Spq{2n}$ and Macdonald polynomials are not new and were already noticed in \cite{BKS90}. A further connection was made by Shoji and Sorlin between a related space and the modified Kostka polynomials which are the change of basis from the Hall-Littlewood polynomials to the Schur functions \cite{SS14a}.

As there is a rational parabolic for the Levi subgroup used in the parabolic induction, elementary proofs have been given for any necessary results. The statements and proofs were all inspired by a more general construction of Grojnowski \cite{G92} for parabolic induction of $l$-adic sheaves and subsequently studied by Henderson \cite{H01t}. Some of this work was later extended in the work of Shoji and Sorlin \cite{SS13,SS14a,SS14b} which can also be found in the survey \cite{S16}. The more general construction has the benefit of working even without a rational parabolic subgroup, and in particular the construction of a characteristic map for $\U_{2n}(\mathbf{F}_{q^2})/\Spq{2n}$ should be relatively straightforward although the combinatorial application in Section \ref{sec: LR coeff} would not extend.

The $q\to 1$ case was previously studied by Bergeron and Garsia \cite{BG92} (see also \cite{M95}), where the Gelfand pair $S_{2n}/B_n$ played a similar role. In this case the multiplication on symmetric functions has a representation-theoretic interpretation giving results on the structure coefficients of Jack polynomials for $\alpha=2$. The lack of representation-theoretic interpretation in the $q$-deformed case means that their ideas cannot be directly applied here.

Macdonald polynomials with parameters $(q,q^2)$ also appeared previously in the study of quantum symmetric spaces \cite{N96}, and it would be interesting to see if there is a connection.

\subsection{Outline}
The paper is organized as follows. In Section \ref{section: preliminaries}, notation and preliminary background is reviewed. Section \ref{sec: induction for functions} develops the theory of parabolic induction for bi-invariant functions in an elementary way. In Section \ref{sec:characteristic map}, the characteristic map for $\GLq{2n}/\Spq{2n}$ is constructed. Section \ref{sec: LR coeff} gives an application to positivity and vanishing of the Schur expansion of skew Macdonald polynomials with parameters $(q,q^2)$ and Section \ref{sec:computation of spherical function values} gives an application to computing spherical function values. Section \ref{sec: technical results} explains how parabolic induction for functions is a special case of parabolic induction of sheaves on symmetric spaces.

Section \ref{sec: technical results} is the only section which requires any familiarity with character sheaves and the rest of the paper is independent of it.

\section{Preliminaries}
\label{section: preliminaries}
In this section, some needed background is reviewed and the notation and conventions that are used are explained. The notation and background on character sheaves needed is left for Section \ref{sec: technical results}.

\subsection{Notation}
If $f(q)$ is a rational function in $q$, define $f(q)_{q\mapsto q^2}:=f(q^2)$. For example,
\begin{equation*}
|\GLq{n}|_{q\mapsto q^2}=\prod_{i=0}^{n-1}(q^{2n}-q^{2i}).
\end{equation*}
For a symmetric function $f$ with rational coefficients in $q$ when written in terms of $p_\mu$, write $f_{q\mapsto q^2}$ to denote the symmetric function obtained by replacing $q$ with $q^2$ in each coefficient.

Let $G$ be a finite group. If $S\subseteq G$ is some subset, $I_S$ will denote the indicator function for that set. If $H\subseteq G$ is a subgroup, let $\mathbf{C}[G]^G$ denote the set of class functions on $G$ and let $\mathbf{C}[H\backslash G/H]$ denote the set of $H$ bi-invariant functions on $G$. These carry the usual inner product $\langle f,g\rangle=\sum _{x\in G}f(x)\overline{g(x)}$.

If $S,H$ are subgroups of any group $G$, let $H_S=H\cap S$. Given an element $x$ of some finite field extension of $\mathbf{F}_q$, let $f_x$ denote its minimal polynomial.

\subsection{Macdonald polynomials}
For details on Macdonald polynomials including a construction and proofs, see \cite{M95}. Let $\Lambda$ denote the ring of symmetric functions. Consider the inner product on the ring of symmetric functions defined by
\begin{equation*}
\langle p_\lambda,p_\mu\rangle_{q,t}=\delta_{\lambda\mu}z_\lambda\prod \frac{q^{\lambda_i}-1}{t^{\lambda_i}-1},
\end{equation*}
where $z_\lambda=\prod m_i(\lambda)!i^{m_i(\lambda)}$ and $m_i(\lambda)$ denotes the number of parts of size $i$ in $\lambda$. When the context is clear, the dependence on $(q,t)$ may be dropped.

This specializes to the Hall inner product when $q=t$ and setting $q=t^{\alpha}$ and taking a limit gives the Jack polynomial inner product. The \emph{Macdonald polynomials} $P_\lambda(x;q,t)$ (indexed by partitions $\lambda$) are defined by the fact that they are orthogonal with respect to this inner product, and the change of basis to the monomial basis is upper triangular with $1$ along the diagonal. When $q=0$, the Macdonald polynomials are known as \emph{Hall-Littlewood polynomials}, and are written $P_\lambda(x;t)=P_\lambda(x;0,t)$.

Define
\begin{equation*}
c_\lambda(q,t):=\prod _{s\in \lambda}(1-q^{a(s)}t^{l(s)+1}),
\end{equation*}
where $a(s)$ and $l(s)$ denote the arm and leg lengths respectively (so $a(s)+l(s)+1=h(s)$). Similarly define
\begin{equation*}
c'_\lambda(q,t):=\prod _{s\in \lambda}(1-q^{a(s)+1}t^{l(s)}).
\end{equation*}
The dual basis to the $P_\lambda(x;q,t)$ under $\langle ,\rangle_{q,t}$ are denoted $Q_\lambda(x;q,t)$ and satisfy
\begin{equation*}
    Q_\lambda(x;q,t)=\frac{c_\lambda(q,t)}{c'_\lambda(q,t)}P_\lambda(x;q,t).
\end{equation*}

The Macdonald polynomials have an integral form 
\begin{equation*}
    J_\lambda(x;q,t)=c_\lambda(q,t)P_\lambda(x;q,t)=c'_\lambda(q,t)Q_\lambda(x;q,t).
\end{equation*}
Then $\langle J_\lambda,J_\lambda\rangle=c_\lambda(q,t)c_\lambda'(q,t)$.

The symmetric functions $J_\lambda$ can be thought of as a deformation of the Jack polynomials, which are given by taking $q=t^{\alpha}$ and sending $t\to 1$ after dividing by $(1-t)^n$. 

There are also various homomorphisms defined on $\Lambda$, defined through their action on $p_n$. These are also commonly written using plethystic notation but to match the notation in Macdonald this is avoided.

Let $\omega:\Lambda\to \Lambda$ denote the involution taking $p_n$ to $(-1)^{n-1}p_n$ (and extending to make it an algebra homomorphism). Let $\omega_{q,t}:\Lambda\to\Lambda$ denote the involution defined by
\begin{equation*}
    \omega_{q,t}p_n=(-1)^{n-1}\frac{q^n-1}{t^n-1}p_n.
\end{equation*}
The involution $\omega$ interacts well with Schur functions, with
\begin{equation*}
    \omega s_\lambda=s_{\lambda'}
\end{equation*}
while the involution $\omega_{q,t}$ satisfies
\begin{equation*}
\begin{split}
    \omega_{q,t}P_\lambda(x;q,t)&=Q_{\lambda'}(x;t,q)
    \\\omega_{q,t}Q_\lambda(x;q,t)&=P_{\lambda'}(x;t,q).
\end{split}
\end{equation*}

\subsection{Linear Algebraic groups}
The point of view taken is to view the finite groups of interest as rational points of an algebraic group over $\overline{\mathbf{F}_q}$. Thus, the conventions and notation may differ slightly from more classical sources such as Carter \cite{C93} which work directly over the finite field. The definitions and conventions taken mostly follow Digne and Michel \cite{DM91}.

In general, linear algebraic groups $G$ will be defined over $\overline{\mathbf{F}_q}$. Let $F$ denote the Frobenius endomorphism, which will always be the one taking the matrix $(x_{ij})$ to $(x_{ij}^q)$. Then $G(\mathbf{F}_q)$ or $G^F$ will denote the $\mathbf{F}_q$ points.

A \emph{torus} is a group which is isomorphic to $(\overline{\mathbf{F}_q}^*)^n$ for some $n$. Note that the maximal tori contained in $\GL_n$ are precisely the subgroups which are conjugate to the standard maximal torus, which consists of diagonal matrices.

A \emph{Levi subgroup} $L\subseteq G$ is a subgroup that is the centralizer of some torus $T$. The Levi subgroups of $\GL_n$ are all of the form $\prod \GL_{n_i}$ for $\sum n_i=n$. A \emph{Borel subgroup} in $\GL_n$ is some subgroup conjugate to the subgroup of upper triangular matrices which is the standard Borel subgroup.

A \emph{parabolic subgroup} is a subgroup containing a Borel subgroup. In $\GL_n$ these subgroups are conjugate to some subgroup of block upper-triangular matrices which are the standard parabolics. All parabolic subgroups have a decomposition $P=LU$ where $U$ is the \emph{unipotent radical} of $P$ and $L$ is a choice of some Levi subgroup, called the \emph{Levi factor} of $P$. In $\GL_n$, for a standard parabolic consisting of block upper-triangular matrices, the unipotent radical consists of matrices which are the identity in each block and arbitrary above the diagonal blocks. As an example, in $\GL_4$
\begin{equation*}
\setlength{\arraycolsep}{3pt}
    P=\left\{\left(\begin{array}{cccc}
         *&*&*&*\\
         *&*&*&*\\
         0&0&*&*\\
         0&0&*&*
    \end{array}\right)\right\}, \quad
    U=\left\{\left(\begin{array}{cccc}
         1&0&*&*\\
         0&1&*&*\\
         0&0&1&0\\
         0&0&0&1
    \end{array}\right)\right\},\quad
    L=\left\{\left(\begin{array}{cccc}
         *&*&0&0\\
         *&*&0&0\\
         0&0&*&*\\
         0&0&*&*
    \end{array}\right)\right\}
\end{equation*}
are an example of a parabolic subgroup and the unipotent radical and Levi subgroup respectively (the $*$'s denote arbitrary entries although the matrices must be invertible). As the unipotent radical is normal, every parabolic subgroup has a canonical projection $P\to L$ which for a standard parabolic in $\GL_n$ amounts to replacing the blocks on the diagonal of a matrix in $P$ with identity matrices. In general the projection of $x\in P$ will be denoted by $\overline{x}$. As Borel subgroups are parabolic, the same applies with $L$ being a maximal torus.

Say that a subgroup $H\subseteq G$ is \emph{rational} (or $\mathbf{F}_q$-rational) if it is stable under $F$. Any rational subgroup $H$ defines a subgroup $H^F\subseteq G^F$. Conversely, a subgroup of $G^F$ is always algebraic and so defines a rational subgroup $H\subseteq G$ by base change. The standard Levi and parabolic subgroups are rational.

In $\GL_n$, the rational points of rational maximal tori are of the form $\prod \mathbf{F}_{q^i}$ and two rational maximal tori are conjugate under $GL_n^F$ if and only if their rational points are isomorphic. 

\subsection{Representation theory of $\GLq{n}$}
To fix notation, the representation theory of $\GLq{n}$ is briefly reviewed. The representation theory of $\GLq{n}$ was originally developed by Green in \cite{G55} but this section follows  the conventions in \cite{M95}. Let $\mathbf{M}$ denote the group of units of $\overline{\mathbf{F}}_q$ and let $\mathbf{M}_n$ denote the fixed points of $F^n$ with $F$ the Frobenius endomorphism $F(x)=x^q$. Note that $\mathbf{M}_n$ may be identified with $\mathbf{F}_{q^{n}}^*$. Let $\mathbf{L}$ be the character group of the inverse limit of the $\mathbf{M}_n$ with norm maps between them. The Frobenius endomorphism $F$ acts on $\mathbf{L}$ in a natural manner so let $\mathbf{L}_n$ denote the $F^n$ fixed points in $\mathbf{L}$, and note there is a natural pairing of $\mathbf{L}_n$ with $\mathbf{M}_n$ for each $n$ (but these pairings are not consistent).

The $F$-orbits of $\mathbf{M}$ can be viewed as irreducible polynomials over $\mathbf{F}_q$ under $O\mapsto \prod_{\alpha\in O}(x-\alpha)$. Denote by $O(\mathbf{M})$ and $O(\mathbf{L})$ the $F$-orbits in $\mathbf{M}$ and $\mathbf{L}$ respectively. Use $\mathcal{P}$ to denote the set of partitions. Then the conjugacy classes of $\GLq{n}$, denoted $C_\mu$, are indexed by partition-valued functions $\mu:O(\mathbf{M})\rightarrow \mathcal{P}$ such that 
\begin{align*}
\|\mu\|:=\sum _{f\in O(\mathbf{M})}d(f)|\mu(f)|=n,
\end{align*}
where $d(f)$ denotes the degree of $f$. This is because $\mu$ contains the information necessary to construct the Jordan canonical form. That is, given $\mu$, construct a matrix in $\GL_{n}(\overline{\mathbf{F}_q})$ in Jordan form by taking for each orbit $f\in O(\mathbf{M})$, $l(\mu(f))$ blocks, of sizes $\mu(f)_i$, for each root of $f$. The resulting matrix has $d(f)$ blocks of size $\mu(f)_i$ for each $f$ and $i$, and adding this all up gives $\|\mu\|=n$.

For example, the partition-valued function corresponding to the set of transvections, which have Jordan form
\begin{align*}
\left(\begin{array}{ccccc}
1&1&0&\dots&0\\
0&1&0&\dots&0\\
0&0&1&\dots&0\\
\vdots&\vdots&\vdots&\ddots&\vdots\\
0&0&0&\dots&1
\end{array}\right),
\end{align*}
correspond to the partition-valued function $\mu$ with $\mu(f_1)=(21^{n-2})$ and $\mu(f)=0$ for $f\neq f_1$. Use $q_f$ to denote $q^{d(f)}$. There is a formula for the sizes of conjugacy classes given by
\begin{equation*}
|C_\mu|=\frac{|\GLq{n}|}{a_\mu(q)}
\end{equation*}
where
\begin{equation*}
a_\mu(q)=q^n\prod _{f\in O(\mathbf{M})}q_f^{2n(\mu(f))}\prod_{i\geq 1}\prod _{j=1}^{m_i(\mu(f))}(1-q_f^{-j}),
\end{equation*}
with $n(\lambda)=\sum (i-1)\lambda_i$.

Similarly, the irreducible characters of $\GLq{n}$, denoted $\chi_\lambda$, are indexed by functions $\lambda:O(\mathbf{L})\rightarrow \mathcal{P}$ such that
\begin{equation*}
\|\lambda\|:=\sum _{\varphi\in O(\mathbf{L})}d(\varphi)|\lambda(\varphi)|=n,
\end{equation*}
where $d(\varphi)$ denotes the size of the orbit $\varphi$. The dimension of the irreducible representation corresponding to $\lambda$ is given by
\begin{equation*}
d_{\lambda}=\psi_n(q)\prod _{\varphi\in O(\mathbf{L})}q_\varphi^{n(\lambda(\varphi)')}H_{\lambda(\varphi)}(q_\varphi)^{-1},
\end{equation*}
where $\psi_n(q)=\prod _{i=1}^n(q^i-1)$, $q_\varphi=q^{d(\varphi)}$ and $H_\lambda(t)=\prod _{x\in \lambda}(t^{h(x)}-1)$, $h(x)$ denoting the hook length. Note that with this convention, the trivial representation corresponds to the partition-valued function $\lambda(\chi_{1})=(1^n)$ ($\chi_1$ being the trivial character) and $0$ otherwise (this differs from the usual convention for $S_n$, where the trivial representation corresponds to the partition $(n)$).

As a matter of convention, $\mu$ will always be used to denote partition-valued functions $O(\mathbf{M})\to \mathcal{P}$ while $\lambda$ will be used to denote partition-valued functions $O(\mathbf{L})\to\mathcal{P}$. In general, if there is some expression involving a partition $\mu$, $F(\mu)$, with $F(0)=1$, then the same expression with $\mu:O(\mathbf{M})\rightarrow \mathcal{P}$ will be defined as the product $\prod _{f\in O(\mathbf{M})}F(\mu(f))$. Thus, write
\begin{equation*}
z_\mu=\prod _{f\in O(\mathbf{M})}z_{\mu(f)}.
\end{equation*}
If the expression contains $q$, in each factor it should be replaced by $q_f$. A similar convention is used for $\lambda:O(\mathbf{L})\to\mathcal{P}$.

The \emph{Deligne-Lusztig characters} (also called basic functions for $\GLq{n}$) give another basis for the space of class functions on $\GLq{n}$. For a more detailed overview of Deligne-Lusztig characters, including their construction and properties, see the book of Carter \cite{C93}.

Given any rational maximal torus $T\subseteq \GL_n$, a (virtual) character $\zeta_T^{\GL_n}(\cdot|\theta)$ of $\GLq{n}$ associated to some irreducible character $\theta$ of $T^F$ can be constructed. The character constructed depends only on the $\GLq{n}$-conjugacy class of the pair $(T,\theta)$. 

Deligne-Lusztig characters are rational functions in $q$ in the following sense. Given an element $g\in \GLq{n}$ with Jordan decomposition $g=su$ ($s$ semisimple and $u$ unipotent), the Deligne-Lusztig characters can be computed as
\begin{equation*}
\zeta_T^{\GL_n}(g|\theta)=\sum _{\substack{x\in (\GL_n/Z(s))^F \\ xsx^{-1}\in T^F}}\theta(xsx^{-1})Q_{x^{-1}Tx}^{Z(s)}(u),
\end{equation*}
where $Q_{x^{-1}Tx}^{Z(s)}(u)$ is a rational function of $q$, known as the \emph{Green function}. The Green functions can be computed as
\begin{equation*}
Q_{x^{-1}Tx}^{Z(s)}(u)=\prod _{f\in O(\mathbf{M})}Q_{\gamma(f)}^{\mu(f)}(q_f),
\end{equation*}
where $Q_\rho^\mu(q)$ denotes the \emph{Green polynomials}, $\mu$ is a partition valued function indexing the conjugacy class of $g$ and $\gamma(f)$ is the partition given by taking $s\in x^{-1}T^Fx\cong \prod \mathbf{M}_{k_i}$ and including as parts the $k_i/d(f)$ for which $f$ kills $s$ restricted to $\mathbf{M}_{k_i}$. Since isomorphic maximal tori in $\GLq{n}$ are always conjugates, the tori $x^{-1}T^Fx$ for $x\in (\GL_n/Z(s))^F$ are exactly those containing $s$ up to conjugation by $Z(s)^F$, which in turn are of the form $\prod \mathbf{M}_{\gamma_i(f)d(f)}$ for $\gamma$ a partition valued function such that $|\gamma(f)|=|\mu(f)|$ for all $f\in O(\mathbf{M})$. This means that the Deligne-Lusztig character can be written
\begin{equation}
\zeta_T^{\GL_n}(g|\theta)=\sum _{t\in T,|\gamma_t(f)|=|\mu(f)|}\theta(t)\prod _{f\in O(\mathbf{M})}Q_{\gamma_t(f)}^{\mu(f)}(q_f),
\end{equation}
where $\gamma_t$ denotes the partition valued function corresponding to the torus $x^{-1}T^Fx$ for $t=xsx^{-1}$ and $s$ is the semisimple element as described above.

The Green polynomials give the change of basis from power sum to Hall-Littlewood polynomials and so satisfy
\begin{equation*}
p_\rho(x)=\sum _{\mu}Q_\rho^\mu(t)t^{-n(\mu)}P_\mu(x;t^{-1})
\end{equation*}
for a formal parameter $t$. 

The Deligne-Lusztig characters are invariant under conjugation of $(T,\theta)$ by $G^F$. There is a correspondence between $G^F$ orbits of pairs $(T,\theta)$ and functions $\lambda:O(\mathbf{L})\to\mathcal{P}$ with $\|\lambda\|=n$. Call the function $\lambda$ the \emph{combinatorial data} associated to $(T,\theta)$.

The correspondence is as follows. Given a torus $T$ with rational points isomorphic to $\prod \mathbf{M}_{k_i}$, and a character $\theta$ of $T^F$, consider the partition-valued function sending $\varphi$ to the partition with parts $k_i/d(\varphi)$ for all $i$ such that $\theta$ restricted to $\mathbf{M}_{k_i}$ lies in the orbit $\varphi$. Conversely, given $\lambda$, $T$ can be constructed as a torus with rational points isomorphic to $\prod_{\varphi,i} \mathbf{M}_{\lambda(\varphi)_i d(\varphi)}$ and $\theta$ is given by picking for each factor $\mathbf{M}_{\lambda(\varphi)_i d(\varphi)}$ an element of the orbit $\varphi$ (which determines $T$ and $\theta$ up to conjugacy by $G^F$).

Given some $\lambda:O(\mathbf{L})\rightarrow \mathcal{P}$, consider the Levi subgroup $L_\lambda$ whose rational points are given by
\begin{equation*}
\prod_{\varphi\in O(\mathbf{L})} \GL_{|\lambda(\varphi)|}(\mathbf{F}_{q^{d(\varphi)}})\subseteq \GLq{n},
\end{equation*}
with Weyl group $W(\lambda)=\prod _{\varphi\in O(\mathbf{L})}S_{|\lambda(\varphi)|}$. To each Weyl group element $w$ there is an associated partition-valued function sending $\varphi$ to the cycle type of $w(\varphi)$. Let $(T_w,\theta_{w})$ denote the torus and character associated to the combinatorial data defined by $w$. Then there is a formula relating the Deligne-Lusztig characters to the irreducible characters of $\GLq{n}$ given by
\begin{equation*}
    \chi_\lambda(g)=\frac{(-1)^{n-|\lambda|}}{|W(\lambda)|}\sum_{w\in W(\lambda)} \prod _{\varphi\in O(\mathbf{L})}\chi^{Sym}_{\lambda'(\varphi)}(w(\varphi))\zeta_{T_w}^{\GL_n}(g|\theta_{w}),
\end{equation*}
where the $\chi^{Sym}_\lambda$ are the irreducible characters of the symmetric group. Here $|\lambda|=\sum _\varphi |\lambda(\varphi)|$.

\subsection{The symmetric space $\GLq{2n}/\Spq{2n}$}
Let
\begin{equation*}
J=\left(\begin{array}{cccccc}
& & & & & -1\\
& & & & \iddots& \\
& & &-1 & & \\
& & 1& & & \\
 & \iddots & & & &\\
 1&  & & & & \\
\end{array}\right)
\end{equation*}
define the standard symplectic form (where any empty space in the matrix is $0$) and define the involution 
\begin{equation*}
    \iota(X)=-J(X^T)^{-1}J
\end{equation*}
of $\GL_{2n}$. This choice is made for convenience so that the upper triangular matrices are stable under $\iota$. This clearly commutes with the Frobenius map $F$. Then $\Sp_{2n}$ denotes the subgroup of $\GL_{2n}$ fixed by $\iota$.

Now for any $\iota$-stable subgroup $S$, let $S^\iota$ denote the subgroup of $\iota$ fixed points and $S^{-\iota}$ denote the set of $\iota$-\emph{split} elements, which are elements $s\in S$ such that $\iota(s)=s^{-1}$.

A \emph{Gelfand pair} is a (finite) group $G$, with a subgroup $H\subseteq G$ such that inducing the trivial representation from $H$ to $G$ gives a multiplicity-free representation. For a Gelfand pair $G/H$, any representation $\rho$ of $G$ has either no non-zero $H$-fixed vectors, or a $1$-dimensional subspace fixed pointwise by $H$. Say that $\rho$ is a \emph{spherical representation} if it has an $H$-fixed vector, and define the corresponding \emph{spherical function} to be $\phi(g)=\langle v_\rho,\rho(g)v_\rho\rangle$, where $v_\rho$ is a unit $H$-fixed vector. 

The spherical functions can also be computed by averaging characters over $H$. That is, $\phi(g)=|H|^{-1}\sum _{h\in H}\chi(hg)$ for $\chi$ the character of $\rho$ (if $\rho$ is not a spherical representation, then this average is $0$). The spherical functions are the replacement for characters of a group and in particular, they form a basis for the space of bi-invariant functions on $G$.

Now $\GLq{n}/\Spq{n}$ is a Gelfand pair, and the spherical functions, denoted $\phi_\lambda$, are indexed by partition-valued function $\lambda:O(\mathbf{L})\to\mathcal{P}$ with $\|\lambda\|=n$ (see \cite{BKS90}, although note a different convention is used in this paper so all partitions labeling representations are transposed). For a partition $\lambda$, let $\lambda\cup\lambda$ denote the partition which contains every part of $\lambda$ twice. Then $\phi_\lambda$ is the spherical function corresponding to the representation with character $\chi_{\lambda\cup\lambda}$ of $\GLq{2n}$.

Let $M_\mu\in \GLq{n}$ denote a conjugacy class representative of $C_\mu$. Let $g_\mu\in \GLq{2n}$ denote the matrix acting only on the first $n$ coordinates by $M_\mu$. The $\Spq{2n}$-double cosets of are indexed by $\mu:O(\mathbf{M})\rightarrow\mathcal{P}$, with $\|\mu\|=n$, with the $g_\mu$ being double coset representatives.

Two key results from \cite{BKS90} are reproduced below. The first relates the sizes of double cosets in $\GLq{2n}/\Spq{2n}$ to the sizes of conjugacy classes in $\GLq{n}$.

\begin{proposition}[{\cite[Proposition 2.3.6]{BKS90}}]
\label{prop: double coset size}
Let $\mu:O(\mathbf{M})\rightarrow \mathcal{P}$ with $\|\mu\|=n$. Then
\begin{equation*}
|H^Fg_\mu H^F|=|H^F||C_\mu|_{q\mapsto q^2},
\end{equation*}
where $H^Fg_\mu H^F$ denotes the double coset indexed by $\mu$ in $\GLq{2n}$ and $C_\mu$ denotes the conjugacy class indexed by $\mu$ in $\GLq{n}$.
\end{proposition}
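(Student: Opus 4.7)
The natural plan is to convert the double coset count to an orbit count on the symmetric space via the standard $\iota$-twisting trick. Specifically, Lang's theorem applied to the connected group $\Sp_{2n}$ gives a bijection
\begin{equation*}
    \GL_{2n}^F/\Sp_{2n}^F \;\longleftrightarrow\; \{s\in \GL_{2n}^F : \iota(s)=s^{-1}\},
    \qquad g\Sp_{2n}^F\mapsto g\iota(g)^{-1},
\end{equation*}
which is equivariant for left multiplication by $H^F=\Sp_{2n}^F$ on the left and conjugation by $H^F$ on the right. Hence $H^F$-double cosets in $\GL_{2n}^F$ correspond bijectively to $H^F$-conjugacy classes of $\iota$-split elements, and the class of $g_\mu$ maps to the class of $s_\mu:=g_\mu\iota(g_\mu)^{-1}$. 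By orbit--stabilizer, $|H^F g_\mu H^F|=|H^F|^2/|Z_{H^F}(s_\mu)|$, so the proposition is equivalent to the identity
\begin{equation*}
    |Z_{H^F}(s_\mu)|=\frac{|H^F|}{|\GL_n(\mathbf{F}_{q^2})|}\cdot|Z_{\GL_n(\mathbf{F}_{q^2})}(M_\mu)|.
\end{equation*}

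Next I would compute $s_\mu$ explicitly. With $g_\mu=M_\mu\oplus I$ and $\iota(X)=-J(X^T)^{-1}J$, a direct block computation shows
\begin{equation*}
    s_\mu=\begin{pmatrix} M_\mu & 0 \\ 0 & w_0 M_\mu^T w_0 \end{pmatrix},
\end{equation*}
where $w_0$ is the $n\times n$ reverse identity. Over $\overline{\mathbf{F}_q}$ this is conjugate to $M_\mu\oplus M_\mu$, so $Z_{\GL_{2n}}(s_\mu)$ is isomorphic to $\GL_2(A_\mu)$, where $A_\mu$ is the centralizer algebra of $M_\mu$ in $\mathrm{Mat}_n$. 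The crucial remaining task is to analyze how $\iota$ restricts to this centralizer and to identify the resulting rational structure.

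The main technical step is to show that the $\iota$-fixed subgroup $Z_{H}(s_\mu)$, inside the identification $Z_{\GL_{2n}}(s_\mu)\cong\GL_2(A_\mu)$, acquires the structure of an algebraic group whose Frobenius-fixed points agree with the predicted count. The involution $\iota$ combined with the block swap that conjugates $s_\mu$ to $M_\mu\oplus M_\mu$ exchanges the two $A_\mu$-diagonal entries with an $A_\mu$-valued transpose-inverse twist; the resulting fixed subgroup is essentially a "unitary-type" group over $A_\mu$ with Frobenius acting through $F^2$ on $A_\mu$, so its $\mathbf{F}_q$-points match a $\GL_n(\mathbf{F}_{q^2})$-centralizer up to the unipotent factor coming from the off-diagonal blocks. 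I would verify this by checking on the universal cases $M_\mu=\alpha\cdot J_r$ (single Jordan block) and then combining multiplicatively across the primary decomposition indexed by $f\in O(\mathbf{M})$.

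The main obstacle is this last identification: one must match the symplectic condition on the block $\begin{pmatrix}A & B\\ C & D\end{pmatrix}$ with $A,D\in A_\mu$ and $B,C$ in appropriate twisted intertwiners to an $\mathbf{F}_{q^2}/\mathbf{F}_q$ Galois structure on $A_\mu$, and then carefully extract the power of $q$ (which should be exactly $|H^F|/|\GL_n(\mathbf{F}_{q^2})|=q^n$) coming from the unipotent radical of $Z_H(s_\mu)$. As a sanity check, the case $\mu=(1^n)$ gives $s_\mu=I$, $Z_H(s_\mu)=H^F$, and $|C_\mu|_{q\mapsto q^2}=1$, which is consistent; for $\mu$ with single irreducible factor of degree $n$ the centralizer becomes $\SL_2(\mathbf{F}_{q^n})$, matching $q^n(q^{2n}-1)$. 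Once the general centralizer identification is established, the claimed equality follows by substitution into the formula for $|C_\mu|_{q\mapsto q^2}$.
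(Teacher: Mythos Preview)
The paper does not prove this proposition; it is quoted from Bannai--Kawanaka--Song and used as input, so there is no in-paper argument to compare against. Your overall framework---Lang's theorem for the connected group $\Sp_{2n}$ to identify $G^F/H^F$ with the rational $\iota$-split elements, then orbit--stabilizer to reduce everything to computing $|Z_{H^F}(s_\mu)|$---is correct and is essentially the approach of the original reference.

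There is, however, a real slip in your reformulation. You rewrite the target as $|Z_{H^F}(s_\mu)|=q^n\cdot|Z_{\GL_n(\mathbf{F}_{q^2})}(M_\mu)|$, implicitly identifying $a_\mu(q^2)$ with $|Z_{\GL_n(\mathbf{F}_{q^2})}(M_\mu)|$. These are not equal in general: the former is the formal substitution $q\mapsto q^2$ in the polynomial $a_\mu$, whereas the latter depends on how the irreducible $f\in O(\mathbf{M})$ split over $\mathbf{F}_{q^2}$. For $n=2$ and $\mu(f)=(1)$ with $\deg f=2$ one has $a_\mu(q^2)=q^4-1$, but $f$ splits over $\mathbf{F}_{q^2}$ and $|Z_{\GL_2(\mathbf{F}_{q^2})}(M_\mu)|=(q^2-1)^2$. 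Your own sanity check already exposes this: for a single irreducible factor of degree $n$ you (correctly) get $Z_{H^F}(s_\mu)\cong\SL_2(\mathbf{F}_{q^n})$ of order $q^n(q^{2n}-1)=q^n a_\mu(q^2)$, which for even $n$ disagrees with $q^n(q^n-1)^2$.

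This matters for your plan, because your ``main technical step'' is aimed at the wrong structure. You propose to realize $Z_H(s_\mu)$ as a unitary-type twist with Frobenius $F^2$ on $A_\mu$; that would indeed produce an $\mathbf{F}_{q^2}$-centralizer, which is not what is wanted. In fact the $\iota$-split condition forces the symplectic form to restrict nondegenerately to each generalized eigenspace of $s_\mu$, so over $\overline{\mathbf{F}_q}$ the group $Z_H(s_\mu)$ decomposes as a product of genuine symplectic-type centralizers, with the ordinary Frobenius $F$ permuting the factors along $F$-orbits in $O(\mathbf{M})$. No quadratic extension enters; the $q^2$ in the answer comes from the blockwise identity $|\Sp_{2m}(\mathbf{F}_{q^d})|=q^{dm}\,|\GL_m(\mathbf{F}_{q^d})|_{q\mapsto q^2}$ (and its analogue on the unipotent part of the centralizer), and assembling these over the primary decomposition yields $|Z_{H^F}(s_\mu)|=q^n a_\mu(q^2)$ directly.
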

The second result gives a formula for the values of spherical functions in terms of Deligne-Lusztig characters on $\GLq{n}$.

The function $\zeta_T^{\GL_n}(\cdot|\theta)$ (or any other class function on $\GLq{n}$) may be turned into an $H^F$-bi-invariant function on $\GLq{2n}$ by defining 
\begin{equation*}
\zeta_T^{\GL_n}(g_\mu|\theta)=\zeta_{T}^{\GL_n}(M_\mu|\theta).
\end{equation*}
and extending to the double-coset. It turns out that the correct analogue for Deligne-Lusztig characters are given by the \emph{basic functions}
\begin{equation*}
    \zeta_{T}^{G/H}(\cdot|\theta)=\zeta_T^{\GL_n}(\cdot|\theta)_{q\mapsto q^2}.
\end{equation*}
Here the Deligne-Lusztig characters are viewed as rational functions in $q$ as described above.

\begin{remark}
It may seem a little strange to index the basic functions by maximal tori of $\GL_n$. It is more natural to view $(T,\theta)$ as a maximal torus of $G$ stable under $\iota$ along with a $(T\cap H)^F$ bi-invariant function $\theta$ of $T^F$. The basic functions are then indexed by such pairs up to $H^F$-conjugacy. The basic functions may be seen as parabolically induced from the torus $T$ although instead of working with representations, $l$-adic sheaves must be used. Since there is a correspondence between these two indexing sets, there is no harm in choosing to index with maximal tori of $\GL_n$.
\end{remark}

The following theorem relates the spherical functions to the basic functions.
\begin{theorem}[{\cite[Theorem 6.6.1]{BKS90}}]
\label{thm: spherical function formula}
Let $\lambda:O(\mathbf{L})\rightarrow \mathcal{P}$ with $\|\lambda\|=n$. Then
\begin{equation*}
\begin{split}
\phi_\lambda=&\frac{(-1)^{|\lambda|}}{|W(\lambda)|}\sum _{w\in W(\lambda)}\left(\prod _{\varphi\in O(\mathbf{L})}q_\varphi^{-n(\lambda(\varphi)')}c_{\lambda(\varphi)'}(q_\varphi^2,q_\varphi)d_{\lambda(\varphi)'}(w)(q_\varphi)\right)
\\&\qquad\qquad\qquad\qquad\times\sgn(w)\frac{\zeta_{T_w}^{G/H}(\cdot|\theta_w)}{|T_w|\zeta_{T_w}^{G/H}(1|\theta_w)},
\end{split}
\end{equation*}
where $c_\lambda(q,t)$ denotes the scaling from the two parameter Macdonald polynomials to their integral forms, $d_\lambda(w)(q)$ denotes the change of basis from power sum to two parameter Macdonald polynomials $P_\lambda(q^2,q)$ and $\sgn$ denotes the sign character of $W(\lambda)$.
\end{theorem}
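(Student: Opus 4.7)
The plan is to reduce the formula to a combinatorial identity on Macdonald polynomials by expanding $\chi_{\lambda\cup\lambda}$ in Deligne-Lusztig characters and then averaging over $H^F=\Spq{2n}$. Starting from $\phi_\lambda(g)=|H^F|^{-1}\sum_{h\in H^F}\chi_{\lambda\cup\lambda}(hg)$ and substituting the Deligne-Lusztig expansion of $\chi_{\lambda\cup\lambda}$ already recorded in the paper, everything reduces to understanding the $H^F$-averages of $\zeta_T^{\GL_{2n}}(\cdot|\theta)$ for $(T,\theta)$ ranging over pairs indexed by $W(\lambda\cup\lambda)$.

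The first technical step is to compute these averages. I would show that the average is zero unless $T$ is $\GLq{2n}$-conjugate to an $\iota$-stable torus of the ``doubled'' form, coming via the embedding of a maximal torus $T_0\subseteq \GL_n$ into $\GL_{2n}$ through the block-diagonal construction that realizes double coset representatives $g_\mu$, and unless $\theta$ is $(T\cap H)^F$-invariant. For such surviving pairs, the averaged value at a representative $g_\mu$ must be expressible as $\zeta_{T_0}^{\GL_n}(M_\mu|\theta_0)$ with $q$ replaced by $q^2$; that is, it must agree with the basic function $\zeta_{T_0}^{G/H}(g_\mu|\theta_0)$ divided by $|T_0^F|\zeta_{T_0}^{G/H}(1|\theta_0)$ (up to predictable factors). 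The simplest route is to use the explicit Green-function expansion of Deligne-Lusztig characters together with Proposition~\ref{prop: double coset size}, which provides exactly the $q\mapsto q^2$ substitution; this is consistent with and motivates the definition of basic functions.

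The second step is to identify which $w\in W(\lambda\cup\lambda)$ contribute. The surviving tori correspond to Weyl elements preserving a pairing of the orbit $\varphi$-blocks into doubled parts; the relevant subgroup of $W(\lambda\cup\lambda)$ is the hyperoctahedral group attached to $W(\lambda)$, so after folding the sum collapses to a sum over $w\in W(\lambda)$, with the symmetric-group character values $\chi^{Sym}_{(\lambda\cup\lambda)'(\varphi)}$ restricted to the doubled cycle types.

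Paragraph four performs the combinatorial identification. The restrictions of $\chi^{Sym}_{\lambda'\cup\lambda'}$ (or its transpose) to doubled cycle types are precisely the coefficients, up to the prefactor $q_\varphi^{-n(\lambda(\varphi)')}c_{\lambda(\varphi)'}(q_\varphi^2,q_\varphi)\sgn(w)$, of the change of basis from $p_\mu$ to $P_\lambda(q^2,q)$; this is where the parameters $(q^2,q)$ enter, through the formula $p_\rho=\sum_\mu Q_\rho^\mu(t)t^{-n(\mu)}P_\mu(x;t^{-1})$ specialized and combined with the scaling $c_\lambda$. Collecting all factors and comparing with the definition of $d_\lambda(w)(q)$ yields the stated formula. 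The principal obstacle is the average-of-Deligne-Lusztig-characters step: matching the combinatorial data of an $\iota$-stable rational torus of $\GL_{2n}$ with that of a torus of $\GL_n$ (under $q\mapsto q^2$), and tracking the sign and normalization constants carefully so that the hyperoctahedral folding produces exactly the Macdonald $(q^2,q)$ coefficients rather than some twisted relative.
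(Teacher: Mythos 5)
This theorem is not proved in the paper: it is quoted from \cite[Theorem 6.6.1]{BKS90}, and the remark following Lemma \ref{lemma: isometry} records how the proof there goes --- one shows that the functions defined by the right-hand side are orthogonal with the correct norms (a symmetric-function computation equivalent to the alternative proof of Lemma \ref{lemma: isometry}) and are triangular with respect to the double-coset indicator basis (triangularity of $P_\lambda$ against $m_\mu$), which characterizes them as the spherical functions. Your route --- expanding $\chi_{\lambda\cup\lambda}$ into Deligne--Lusztig characters and averaging directly over $H^F$ --- is a genuinely different strategy, and it is exactly at the averaging step that it breaks down.

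To evaluate $|H^F|^{-1}\sum_{h\in H^F}\zeta_T^{\GL_{2n}}(hg_\mu|\theta)$ through the Green-function expansion you need, for every conjugacy class $C_\nu$ of $\GLq{2n}$, the count $|\{h\in H^F: hg_\mu\in C_\nu\}|$, i.e.\ the full distribution of each double coset over conjugacy classes. Proposition \ref{prop: double coset size} gives only the total size $|H^Fg_\mu H^F|=|H^F|\,|C_\mu|_{q\mapsto q^2}$ and carries no information about that distribution, so it cannot ``provide the $q\mapsto q^2$ substitution'' at the level of individual character values; the substitution in the definition of the basic functions is a definition, not something the double-coset sizes force. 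Likewise, the vanishing of the average for pairs $(T,\theta)$ not conjugate to an $\iota$-stable torus with $(T\cap H)^F$-invariant character is not an elementary orthogonality fact: it is a disjointness theorem for the symmetric space, which in the language of Section \ref{sec: technical results} sits at the level of Grojnowski's and Henderson's results on character sheaves. Even granting the averaging identity as a black box, the folding of the sum over $W(\lambda\cup\lambda)$ down to $W(\lambda)$ and the identification of the doubled symmetric-group character values with the Macdonald $(q^2,q)$ transition coefficients $d_{\lambda(\varphi)'}(w)$ is itself the combinatorial heart of the theorem, and your last paragraph asserts it rather than proves it. As written, the proposal outsources both hard steps, so it does not constitute a proof.
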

Note that the convention used to label spherical representations in \cite{BKS90} is opposite the one used in this paper so all partitions are transposed.

\section{Induction and restriction}
\label{sec: induction for functions}
This section defines a bi-invariant version of the usual parabolic induction and restriction on reductive groups. Only the case when $G=\GL_{2n}$ will be needed, but all results except those concerning basic functions hold for any connected reductive group $G$ and involution $\iota$ (of algebraic groups). Only induction through rational parabolic subgroups will be considered.

Everything in this section follows from (and was inspired by) more general results on a bi-invariant parabolic induction defined for sheaves on symmetric spaces originally due to Grojnowski. These results can be found in \cite{G92,H01t,SS13,SS14a,SS14b}. The connection between the two induction procedures and how the results below follow from results for sheaves is explained in Section \ref{sec: technical results}. To keep the paper accessible and self-contained as much as possible, more elementary proofs of the needed results are given below.

\subsection{Tori, Borel subgroups and parabolic subgroups}
Let $G$ be a connected reductive group defined over $\mathbf{F}_q$ ($q$ odd) with Frobenius endomorphism $F$ and an involution $\iota$ of algebraic groups commuting with $F$. Let $H\subseteq G$ be the subgroup of fixed points of $\iota$.

Fix a rational $\iota$-stable pair $(T,B)$ of a maximal torus $T$ and Borel subgroup $B$ containing $T$. Say that a parabolic subgroup is \emph{standard} if it contains $B$. For convenience in later sections, if $G=\GL_n$, $T$ can be taken to be the set of diagonal matrices in $\GL_n$. 

All pairs of rational $\iota$-stable $(T,B)$ with $T\subseteq B$ are conjugate under $H^F$ (see \cite{SS13} for example). Furthermore, all rational $\iota$-stable Levi subgroups in an $\iota$-stable parabolic $P$ are conjugate under $H^F\cap U^F$ since they're conjugate by a unique element of the unipotent radical.

Since $\iota$ acts on $G$ and preserves $T$ and $B$, it preserves the simple roots and so determines an automorphism of the Dynkin diagram of $G$. Standard parabolic subgroups correspond to subsets of the Dynkin diagram and it's not hard to see that $\iota$-stable parabolics correspond to $\iota$-stable subsets. This gives an easy way to classify the Levi subgroups with $\iota$-stable rational parabolics.

In the case of $G=\GL_{2n}$ and $\iota$ fixing the symplectic group, the Dynkin diagram is $A_{2n-1}$ and the action of $\iota$ is given by reflecting along the middle. Then the $\iota$-stable parabolic subgroups correspond to subsets of the simple roots which are symmetric about the middle, consisting of pairs of the type $A_{n_i}$ and one of type $A_{2n_0-1}$ with $\sum n_i=n$. The corresponding Levi subgroups are then of the form $\GL_{2n_0}\times \prod \GL_{n_i}\times \GL_{n_i}$ where $\GL_{2n_0}$ corresponds to the middle component of the diagram (and $n_0=0$ if the middle vertex is not included) and the $\GL_{n_i}\times \GL_{n_i}$ factors correspond to the symmetric pairs of components.

Take $T\subseteq B$ to be the diagonal matrices and upper triangular matrices. Both are stable under $\iota$. Then the standard parabolic subgroups are exactly the block upper triangular matrices, and the $\iota$-stable ones are those whose block structure is symmetric about the anti-diagonal. 

\subsection{Bi-invariant parabolic induction}
Let $P$ be a rational $\iota$-stable parabolic subgroup with rational $\iota$-stable Levi factor $L$ and unipotent radical $U$. Then define a function
\begin{equation*}
    \Ind_{L\subseteq P}^{G/H}:\mathbf{C}[H_L^F\backslash L^F/H_L^F]\to \mathbf{C}[H^F\backslash G^F/H^F]
\end{equation*}
with the formula
\begin{equation*}
    \Ind_{L\subseteq P}^G(f)(x)=|H^F\cap P^F|^{-2}\sum _{\substack{h,h'\in H^F\\hxh'\in P^F}}f(\overline{hxh'}).
\end{equation*}
It is clear that this defines an $H^F$ bi-invariant function. This definition reduces to the standard definition of parabolic (or Harish-Chandra) induction when considering the group $G\times G$ and the involution switching the two factors.

\begin{remark}
The choice is made to work with bi-invariant functions on $G^F$ rather than invariant functions on $G^F/H^F$, which is the convention taken in \cite{BKS90} and \cite{H01t}. Of course these two points of view are completely equivalent and indeed the induction implicitly used in \cite{BKS90} can be viewed as $\Phi_G\circ \Ind_{L\subseteq P}^{G/H}\circ \Phi_L^{-1}$ where $\Phi_G(f)(xH^F)=f(x)$.
\end{remark}

When studying induction, many times it is convenient to work with standard parabolic subgroups. The next proposition shows that induction is invariant under conjugation by $H^F$ and so it always suffices to study induction through standard parabolics.

\begin{lemma}
For any $h_0\in H^F$ and $f\in \mathbf{C}[H_L^F\backslash L^F/H_L^F]$,
\begin{equation*}
    \Ind_{h_0Lh_0^{-1}\subseteq h_0Ph_0^{-1}}^{G/H}(^{h_0}f)=\Ind_{L\subseteq P}^{G/H}(f)
\end{equation*}
where $^{h_0}f$ is the function $^{h_0}f(x)=f(h_0^{-1}xh_0)$.
\end{lemma}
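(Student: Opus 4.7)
The plan is to verify the identity by a change of variables in the sum defining the induction. Writing out the right-hand side,
\begin{equation*}
\Ind_{L\subseteq P}^{G/H}(f)(x)=|H^F\cap P^F|^{-2}\sum_{\substack{h_1,h_2\in H^F\\ h_1xh_2\in P^F}}f(\overline{h_1xh_2}),
\end{equation*}
while the left-hand side, with $P'=h_0Ph_0^{-1}$, $L'=h_0Lh_0^{-1}$, $U'=h_0Uh_0^{-1}$, is
\begin{equation*}
\Ind_{L'\subseteq P'}^{G/H}({}^{h_0}\!f)(x)=|H^F\cap P'^F|^{-2}\sum_{\substack{h,h'\in H^F\\ hxh'\in P'^F}}f(h_0^{-1}\overline{hxh'}\,h_0),
\end{equation*}
where the bar now denotes projection $P'\to L'$. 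The substitution I would make is $h_1=h_0^{-1}h$ and $h_2=h'h_0$, which is a bijection $H^F\times H^F\to H^F\times H^F$ since $h_0\in H^F$.

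The two prefactors agree because conjugation by $h_0\in H^F$ sends $H^F\cap P^F$ bijectively to $H^F\cap P'^F$, so $|H^F\cap P'^F|=|H^F\cap P^F|$. For the summation condition, $hxh'\in P'^F$ is equivalent to $h_0^{-1}(hxh')h_0=h_1xh_2\in P^F$, so the index set matches exactly. The remaining step is to identify the projections: if $z=h_1xh_2=lu\in LU=P$, then $hxh'=h_0zh_0^{-1}=(h_0lh_0^{-1})(h_0uh_0^{-1})\in L'U'=P'$, so the $P'\to L'$ projection of $hxh'$ equals $h_0\,\overline{z}\,h_0^{-1}$. Hence
\begin{equation*}
f(h_0^{-1}\overline{hxh'}\,h_0)=f(h_0^{-1}(h_0\,\overline{h_1xh_2}\,h_0^{-1})h_0)=f(\overline{h_1xh_2}),
\end{equation*}
and the two expressions coincide term by term.

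There is no real obstacle here; the lemma is a formal consequence of the fact that $h_0\in H^F$ simultaneously permutes $H^F$, normalizes the notion of bi-invariance, conjugates $P$ to $P'$, and intertwines the two unipotent-radical projections. The only point requiring a (tiny) check is that conjugation commutes with the projection $P\to L$, which follows from $h_0L h_0^{-1}=L'$ and $h_0Uh_0^{-1}=U'$ together with uniqueness of the $LU$ decomposition. I would present the proof as a single displayed chain of equalities implementing the substitution described above.
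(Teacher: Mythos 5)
Your proof is correct and is essentially identical to the paper's: both rewrite the projection for $h_0Ph_0^{-1}$ as the $h_0$-conjugate of the projection for $P$ and then reindex the sum by $h\mapsto h_0^{-1}h$, $h'\mapsto h'h_0$, using $H^F\cap h_0P^Fh_0^{-1}=h_0(H^F\cap P^F)h_0^{-1}$ to match the prefactors. No gaps.
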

\begin{proof}
Note that the projection map $h_0Ph_0^{-1}\to h_0Lh_0^{-1}$ can be written as $x\mapsto h_0\overline{h_0^{-1}xh_0}h_0^{-1}$ where $\overline{x}$ denotes the projection for $P$. Then compute
\begin{equation*}
\begin{split}
    &\Ind_{h_0Lh_0^{-1}\subseteq h_0Ph_0^{-1}}^{G/H}(^{h_0}f)(x)
    \\=&|H^F\cap P^F|^{-2}\sum _{\substack{h,h'\in H^F\\hxh'\in h_0P^Fh_0^{-1}}}f(h_0^{-1}h_0\overline{h_0^{-1}hxh'h_0}h_0^{-1}h_0)
    \\=&|H^F\cap P^F|^{-2}\sum _{\substack{h,h'\in H^F\\hxh'\in P^F}}f(\overline{hxh'})
    \\=&\Ind_{L\subseteq P}^{G/H}(f)(x)
\end{split}
\end{equation*}
where $H\cap h_0Ph_0^{-1}=h_0(H\cap P)h_0^{-1}$.
\end{proof}

\begin{proposition}
\label{prop: transitivity for functions}
If $M\subseteq L$ are rational $\iota$-stable Levi subgroups with $Q\subseteq P$ rational $\iota$-stable parabolics with Levi factors $M,L$ respectively, then
\begin{equation*}
    \Ind_{M\subseteq Q}^{G/H}=\Ind_{L\subseteq P}^{G/H}\circ \Ind_{M\subseteq Q\cap L}^{L/H_L}.
\end{equation*}
\end{proposition}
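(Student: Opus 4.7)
The plan is to directly expand both sides of the claimed identity using the definition of $\Ind$, match the summands through a change of variable, and then verify that the overall constants balance. Let $U$, $U_Q$, and $V_L$ denote the unipotent radicals of $P$, $Q$, and (the parabolic) $Q\cap L$ in $L$, respectively. Two structural facts will be essential. First, since $Q \subseteq P$ one has $U \subseteq U_Q$, and combined with $V_L \subseteq U_Q$ and $V_L \cap U = 1$ this yields $U_Q = V_L \cdot U$ as a semidirect product with $U$ normal. Second, the projections are compatible: for $x \in Q$, decomposing $x = lu$ in $P = L \cdot U$ forces $l \in Q \cap L$, so $\overline{x}^Q = \overline{\overline{x}^P}^{Q\cap L}$.

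With these in hand, I would unfold $(\Ind_{L\subseteq P}^{G/H}\circ \Ind_{M\subseteq Q\cap L}^{L/H_L})(f)(x)$ as a quadruple sum over $(h,h',h_1,h_1') \in (H^F)^2 \times (H_L^F)^2$ subject to $hxh' \in P^F$ and $h_1 \overline{hxh'}^P h_1' \in (Q\cap L)^F$, with summand $f\bigl(\overline{h_1\overline{hxh'}^P h_1'}^{Q\cap L}\bigr)$. The key step is the substitution $h_2 = h_1 h$, $h_2' = h' h_1'$: since $h_1, h_1' \in L$ they commute past the projection $\overline{\cdot}^P$, giving $h_1 \overline{hxh'}^P h_1' = \overline{h_2 x h_2'}^P$, and the constraints then collapse to the single condition $h_2 x h_2' \in Q^F$ (using that $Q = (Q\cap L)\cdot U$, so any element of $P$ whose $L$-part lies in $Q\cap L$ must itself lie in $Q$). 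Compatibility of projections then rewrites the summand as $f(\overline{h_2 x h_2'}^Q)$, which is exactly the summand of $\Ind_{M\subseteq Q}^{G/H}(f)(x)$. Since the map $(h,h_1)\mapsto h_1 h$ has fibers of size $|H_L^F|$, each pair $(h_2, h_2')$ appears with multiplicity $|H_L^F|^2$.

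What remains is to check that the prefactors match, i.e., that
\begin{equation*}
|H^F\cap P^F|\cdot |H_L^F\cap (Q\cap L)^F| = |H_L^F|\cdot |H^F\cap Q^F|.
\end{equation*}
For this I would use the following general principle: whenever an $\iota$-stable algebraic group $X$ decomposes as a semidirect product of $\iota$-stable subgroups $A$ and $B$, uniqueness of the factorization together with $\iota$-equivariance forces $H \cap X = (H\cap A)\cdot (H\cap B)$, and hence $|(H\cap X)^F| = |(H\cap A)^F|\cdot |(H\cap B)^F|$. Applying this to the decompositions $P = L\cdot U$, $Q\cap L = M\cdot V_L$, $U_Q = V_L \cdot U$, and $Q = M \cdot U_Q$ reduces every factor in the target identity to a product of $|H_L^F|$, $|H_M^F|$, $|H_{V_L}^F|$, and $|H_U^F|$, and both sides collapse to $|H_L^F|\cdot |H_M^F|\cdot |H_{V_L}^F|\cdot |H_U^F|$.

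The main obstacle is really only the combinatorial bookkeeping of cardinalities in this last step; everything else follows mechanically from the definitions and from $L$-equivariance of $\overline{\cdot}^P$. A minor technicality to verify carefully is the decomposition $U_Q = V_L \cdot U$ together with the inclusion $V_L \subseteq U_Q$, which follow from the standard flag structure for the chain $B \subseteq Q \subseteq P$.
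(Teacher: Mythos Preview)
Your proposal is correct and follows essentially the same route as the paper's own proof: expand the composite as a double sum, use $H_L$-biequivariance of $\overline{\cdot}^P$ to merge the inner and outer group variables, reduce the constraint to $h_2xh_2'\in Q^F$, and balance the prefactors via semidirect product decompositions of $P$, $Q$, $Q\cap L$, and $U_Q$. The paper is terser about the prefactor check (writing it directly as $|H^F\cap U_P^F|^{-2}|H^F\cap M^F|^{-2}|H^F\cap (U_Q\cap L)^F|^{-2}=|H^F\cap Q^F|^{-2}$), but your factorization principle for $\iota$-stable semidirect products is exactly what underlies that identity.
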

\begin{proof}
This follows from computing both sides. In particular, note that the quotient map $P\to L$ is $H_L$ bi-equivariant and so
\begin{equation*}
\begin{split}
    &|H^F\cap P^F|^{-2}|H_L^F\cap (Q\cap L)^F|^{-2}\sum _{\substack{h_1,h_1'\in H^F\\h_1xh_1'\in P^F}}\sum _{\substack{h_2,h_2'\in H_L^F\\h_2\overline{h_1xh_1'}h_2'\in (Q\cap L)^F}}f(\overline{h_2\overline{h_1xh_1'}h_2'})
    \\=&|H^F\cap Q^F|^{-2}\sum _{\substack{h,h'\in H^F\\hxh'\in P^F}}f(\overline{hxh'})
\end{split}
\end{equation*}
since $h_2\overline{h_1xh_1'}h_2'=\overline{h_2h_1xh_1'h_2'}$ lies in $(Q\cap L)^F$ if and only if $h_2h_1xh_1'h_2'$ lies in $Q^F$ and the sum over $h_2,h_2'\in H_L^F$ contributes a factor of $|H_L^F|^2$ leaving 
\begin{equation*}
    |H^F\cap U_P^F|^{-2}|H^F\cap M^F|^{-2}|H^F\cap (U_Q\cap L)^F|^{-2}=|H^F\cap Q^F|^{-2}
\end{equation*}
where $U_P$ and $U_Q$ denote the unipotent radicals of $P$ and $Q$ respectively.
\end{proof}

\subsection{Bi-invariant parabolic restriction}
Similar to the definition of induction, define a function
\begin{equation*}
    \Res_{L\subseteq P}^{G/H}:\mathbf{C}[H^F\backslash G^F/H^F]\to \mathbf{C}[H_L^F\backslash L^F/H_L^F]
\end{equation*}
with the formula
\begin{equation*}
    \Res^{G/H}_{L\subseteq P}(f)(x):=\sum _{p\in P^F, \overline{p}=x}f(p).
\end{equation*}
This function is $H_L^F$ bi-invariant. Note that this definition coincides with the usual definition of parabolic restriction of class functions. The first observation is that this operation is the adjoint of parabolic induction.
\begin{proposition}
\label{prop: Ind Res adjoint}
If $f$ is $H^F$ bi-invariant on $G^F$ and $g$ is $H_L^F$ bi-invariant on $L^F$, then
\begin{equation*}
    \langle f,\Ind_{L\subseteq P}^{G/H} g\rangle =|H^F|^2|H^F\cap P^F|^{-2} \langle \Res_{L\subseteq P}^{G/H} f,g\rangle.
\end{equation*}
\end{proposition}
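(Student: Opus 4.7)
The proof should be a direct computation unwinding the definitions, so the main task is to carry out a careful change of summation variables. My plan is to start with the left-hand side, substitute the definition of $\Ind_{L\subseteq P}^{G/H}$, and then recognize that the inner triple sum over $(x,h,h')$ with $hxh' \in P^F$ can be repackaged by introducing $p = hxh' \in P^F$ as a new summation variable.

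Concretely, I would expand
\begin{equation*}
\langle f, \Ind_{L\subseteq P}^{G/H} g\rangle = |H^F\cap P^F|^{-2}\sum_{x\in G^F}\sum_{\substack{h,h'\in H^F\\ hxh'\in P^F}} f(x)\,\overline{g(\overline{hxh'})}
\end{equation*}
and then swap to the variable $p = hxh'$. For each $p \in P^F$ and each $(h,h') \in H^F \times H^F$, the equation $hxh' = p$ has the unique solution $x = h^{-1}ph'^{-1}$, so the triple sum over $(x,h,h')$ becomes a triple sum over $(p,h,h')$ with no constraint beyond $p \in P^F$. At this point the $H^F$ bi-invariance of $f$ kicks in: $f(h^{-1}ph'^{-1}) = f(p)$, so the sum over $(h,h')$ just contributes a factor $|H^F|^2$, leaving
\begin{equation*}
|H^F|^2 |H^F\cap P^F|^{-2}\sum_{p\in P^F} f(p)\,\overline{g(\overline{p})}.
\end{equation*}

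To finish, I would group the elements $p \in P^F$ by their image $\overline{p} \in L^F$, noting that $g$ depends only on $\overline{p}$, so
\begin{equation*}
\sum_{p\in P^F} f(p)\,\overline{g(\overline{p})} = \sum_{x\in L^F} \overline{g(x)}\sum_{\substack{p\in P^F\\\overline{p}=x}} f(p) = \sum_{x\in L^F}\overline{g(x)}\,\Res_{L\subseteq P}^{G/H}f(x) = \langle \Res_{L\subseteq P}^{G/H}f, g\rangle,
\end{equation*}
which combines with the prefactor to yield the claimed identity.

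There is no real obstacle here: the only thing to keep track of is the bookkeeping with the normalizing constant $|H^F\cap P^F|^{-2}$, and the fact that the substitution $p = hxh'$ is genuinely a bijection between $\{(x,h,h') : hxh' \in P^F\}$ and $P^F \times H^F \times H^F$. The $H_L^F$ bi-invariance of $g$ is not even used (it is automatic from $g$ being a function on $L^F$ that is bi-invariant), and the projection $P \to L$ being $H_L^F$-equivariant is what makes $\Res_{L\subseteq P}^{G/H}f$ genuinely $H_L^F$ bi-invariant so that the final inner product makes sense.
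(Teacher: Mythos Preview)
Your proof is correct and follows essentially the same approach as the paper: both expand the left-hand side, use the bijection $(x,h,h')\leftrightarrow(p,h,h')$ with $p=hxh'$ together with the $H^F$ bi-invariance of $f$ to reduce to $|H^F|^2|H^F\cap P^F|^{-2}\sum_{p\in P^F}f(p)\overline{g(\overline{p})}$, and then regroup by $\overline{p}\in L^F$ to recognize $\langle \Res_{L\subseteq P}^{G/H}f,g\rangle$. The only cosmetic difference is that the paper invokes bi-invariance to write $f(x)=f(hxh')$ before changing variables, whereas you change variables first and then apply $f(h^{-1}ph'^{-1})=f(p)$.
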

\begin{proof}
Compute
\begin{equation*}
\begin{split}
    \langle f,\Ind_{L\subseteq P}^{G/H}(g)\rangle&=\sum _{x\in G^F}f(x)\overline{\Ind_{L\subseteq P}^{G/H}(g)(x)}
    \\&=|H^F\cap P^F|^{-2}\sum _{x\in G^F}\sum _{\substack{h,h'\in H^F\\ hxh'\in P^F}}f(hxh')\overline{g(\overline{hxh'})}
    \\&=|H^F|^2|H^F\cap P^F|^{-2}\sum _{p\in P^F}f(p)\overline{g(\overline{p})}
\end{split}
\end{equation*}
where the fact that $f$ is $H^F$ bi-invariant is used. On the other hand,
\begin{equation*}
\begin{split}
    \langle \Res_{L\subseteq P}^{G/H}(f),g\rangle&=\sum _{x\in L^F}\sum _{p\in P^F, \overline{p}=x}f(p)\overline{g(x)}
    \\&=\sum _{p\in P^F}f(p)\overline{g(\overline{p})}.
\end{split}
\end{equation*}
\end{proof}

\subsection{Induction of basic functions}
The following results concern only $G=\GL_{2n}$ with $\iota(x)=-J(x^{-1})^TJ$.

Let $L_0$ denote a rational Levi subgroup of the form $\GL_n\times \GL_n$ in $\GL_{2n}$ such that $L_0\cap H\cong \GL_n$ and $L_0/(L_0\cap H)\cong \GL_n$. Let $P_0$ denote a rational $\iota$-stable parabolic for $L_0$. With the conventions taken, an example is given by having $L_0$ act on the first and last $n$ coordinates separately.

Then bi-invariant functions on $L_0$ can be identified with class functions on $\GL_n$, $\iota$-stable Levi and parabolic subgroups can be identified with Levi and parabolic subgroups of $\GL_n$ and bi-invariant parabolic induction corresponds to the usual parabolic induction. 

To see this, note that the map $G\to G^{-\iota}$ restricts to $L_0\to L_0^{-\iota}$ and $L_0^{-\iota}\cong \GL_n$. Then if $f$ is a bi-invariant function on $L^F_0$, there is a unique $H^F_{L_0}$-invariant function $\widetilde{f}$ on $L_0^{-\iota}$ such that $f(x)=\widetilde{f}(x\iota(x)^{-1})$. Also, $H_{L_0}$-conjugacy in $L_0^{-\iota}$ corresponds to $\GL_n$-conjugacy and $\iota$-stable Levi and parabolic subgroups are mapped to Levi and parabolic subgroups of $\GL_n$ by $L\mapsto L^{-\iota}$ and $P\mapsto P^{-\iota}$. Under this correspondence,
\begin{equation*}
\begin{split}
    \Ind_{L\subseteq P}^{L_0/H_{L_0}}(f)(x)&=|\GL_n^F|^{-1}\sum _{h\in \GL_n^F,hxh^{-1}\in P^{-\iota}}\widetilde{f}(\overline{hx\iota(x)^{-1}h^{-1}})
    \\&=\Ind^{G^{-\iota}}_{L^{-\iota}\subseteq P^{-\iota}}(\widetilde{f})(x\iota(x)^{-1})
\end{split}
\end{equation*}
which is just the value of the usual parabolic induction of $f$ from $L^{-\iota}$ to $L_0^{-\iota}$ at the point $x\iota(x)^{-1}$. Thus, for all intents and purposes working in $L_0$ is equivalent to the group situation for $\GL_n$.

For any $\iota$-stable rational Levi subgroup $L\subseteq G$, with $\iota$-stable rational parabolic, 
\begin{equation*}
    L^F\cong \GLq{2n_0}\times \prod \GLq{n_i}\times \GLq{n_i}
\end{equation*}
where $\sum n_i=n$ and $L/H_L\cong \GL_{2n_0}/\Sp_{2n_0}\times \prod \GL_{n_i}$. Let $T=\prod T_i$ be a rational maximal torus of $\prod \GL_{n_i}$. Then if $\theta=\prod \theta_i$ is an irreducible character of $T^F$, let $\zeta_{T}^L(\cdot|\theta)$ be the function on $L^F$ defined by
\begin{equation*}
    \zeta_{T}^{L/H_L}(l|\theta)=\zeta_{T_0}^{\GL_{2n_0}/\Sp_{2n_0}}(l_0|\theta_0)_{q\mapsto q^2}\prod \zeta_{T_i}^{\GL_{n_i}}(l_i|\theta_i)
\end{equation*}
where $l=(l_0,l_1,\cdots)$ and $\zeta_{T_i}^{\GL_{n_i}}(\cdot|\theta_i)$ is viewed as a function on $\GLq{n_i}\times \GLq{n_i}$ under the correspondence described above. These are called \emph{basic functions} and form a basis for the bi-invariant functions on $L^F$. This is a simple extension of the definitions already given for basic functions on symmetric spaces of the form $\GL_{2n}/\Sp_{2n}$ and $\GL_n$ to products of these spaces.

The following proposition is proved in \cite[Theorem 5.3.2]{BKS90} for a particular choice of $L_0$ and $P_0$ but this suffices as all such choices are conjugate under $H^F$.
\begin{proposition}
\label{prop: ind for L_0}
Let $L_0\subseteq P_0$ be defined as above. Then if $T$ is a rational maximal torus of $\GL_n$ and $\theta$ an irreducible character on $T^F$,
\begin{equation*}
    \Ind_{L_0\subseteq P_0}^{G/H}(\zeta_{T}^{\GL_n}(\cdot|\theta))=\frac{|T^F|_{q\mapsto q^2}}{|T^F|}\zeta_{T}^{G/H}(\cdot|\theta).
\end{equation*}
\end{proposition}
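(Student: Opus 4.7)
The plan is a two-step reduction: first, reduce to a single canonical choice of $(L_0, P_0)$ by $H^F$-conjugation; second, invoke \cite[Theorem 5.3.2]{BKS90} for that canonical choice. The author's own remark immediately preceding the statement signals exactly this structure.

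For the reduction, I would first argue that any $(L_0, P_0)$ satisfying the hypotheses is $H^F$-conjugate to a standard one. Since all rational $\iota$-stable Borel pairs are $H^F$-conjugate (as recorded in the preliminaries), every rational $\iota$-stable parabolic is $H^F$-conjugate to a standard one. Under the $\iota$-action on the Dynkin diagram $A_{2n-1}$ (reflection about the middle), the requirement $L_0 \cong \GL_n \times \GL_n$ with $\iota$ swapping the two factors picks out the unique $\iota$-stable subset of simple roots obtained by removing the middle node. The $H^F \cap U^F$-conjugacy of rational $\iota$-stable Levi factors inside a fixed $\iota$-stable parabolic then pins down $L_0$ up to the claimed conjugacy.

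Next I would check that both sides transform compatibly under this $H^F$-conjugation. For the left-hand side this is immediate from the conjugation-invariance lemma: $\Ind_{h_0 L_0 h_0^{-1}\subseteq h_0 P_0 h_0^{-1}}^{G/H}({}^{h_0}\zeta_T^{\GL_n}(\cdot|\theta)) = \Ind_{L_0\subseteq P_0}^{G/H}(\zeta_T^{\GL_n}(\cdot|\theta))$, provided the class function $\zeta_T^{\GL_n}(\cdot|\theta)$ on $H_{L_0}^F \cong \GL_n^F$ is transported correctly. For the right-hand side, $\zeta_T^{G/H}(\cdot|\theta)$ is $H^F$-bi-invariant, hence invariant under $H^F$-conjugation, and the scalar $|T^F|_{q\mapsto q^2}/|T^F|$ depends only on the isomorphism type of $T^F$, i.e.\ on the $\GL_n^F$-conjugacy class of $(T,\theta)$. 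The only bookkeeping step here is to keep track of the fact that conjugating $T \subseteq H_{L_0}$ by $h_0 \in H^F$ produces a torus in the conjugated Levi with the same combinatorial data $\lambda : O(\mathbf{L}) \to \mathcal{P}$, so that the basic function on the right-hand side is genuinely unchanged.

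Finally, for a convenient canonical choice --- say, $L_0$ acting by two independent $n \times n$ blocks on the first and last $n$ coordinates, with $P_0$ the standard block-upper-triangular parabolic with this Levi --- the identity is the content of \cite[Theorem 5.3.2]{BKS90}, which I would cite directly. The hard part, therefore, is the computation inside BKS90 (an explicit evaluation of the induction sum via the Bruhat decomposition on $G^F/P_0^F$, producing the factor $|T^F|_{q\mapsto q^2}/|T^F|$ and the substitution $q\mapsto q^2$ in the Green function); the reduction outlined above is essentially formal, the only care being taken with the transfer of the data $(T,\theta)$ between the Levi picture and the $G/H$ picture.
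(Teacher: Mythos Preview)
Your proposal is correct and matches the paper's approach exactly: the paper does not supply a separate proof but simply remarks that \cite[Theorem 5.3.2]{BKS90} handles one specific choice of $(L_0,P_0)$ and that all such choices are $H^F$-conjugate, which is precisely the two-step reduction you outline. Your additional bookkeeping on how both sides transform under $H^F$-conjugation fleshes out what the paper leaves implicit, but the underlying argument is the same.
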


The next proposition extends the previous one to any maximal $\iota$-stable Levi and parabolic subgroup.

\begin{proposition}
\label{prop:ind for functions}
Let $L$ be a rational $\iota$-stable Levi subgroup of the form $\GL_{2n}\times \GL_m\times \GL_m$ with $H_L\cong \Sp_{2n}\times \GL_m$ and $P$ a rational $\iota$-stable parabolic with $L$ as its Levi factor. Let $T=T_1\times T_2$ be a rational maximal torus of $\GL_{n+m}$ with $T_1$ and $T_2$ maximal tori in $\GL_n$ and $\GL_m$ respectively, and $\theta$ an irreducible character of $T^F$. Then 
\begin{equation*}
    \Ind_{L\subseteq P}^{G/H}(\zeta_{T}^{L/H_L}(\cdot|\theta))=\frac{|T_2^F|_{q\mapsto q^2}}{|T_2^F|}\zeta_{T}^{G/H}(\cdot|\theta).
\end{equation*}
\end{proposition}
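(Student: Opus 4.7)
The plan is to reduce to Proposition \ref{prop: ind for L_0} by applying the transitivity of parabolic induction (Proposition \ref{prop: transitivity for functions}) twice, splitting the induction at an intermediate $\iota$-stable Levi $L_{00}=\GL_n\times\GL_n\times\GL_m\times\GL_m$ of block structure $(m,n,n,m)$ inside $G$. This $L_{00}$ sits inside $L$ as the $L_0$-subgroup of the middle $\GL_{2n}$ factor (leaving the flanking $\GL_m\times\GL_m$ alone), and it sits inside the standard $L_0=\GL_{n+m}\times\GL_{n+m}$ of $G$ by grouping its four blocks as $(m{+}n,\,n{+}m)$. Both containments admit rational $\iota$-stable parabolics because the corresponding subsets of the Dynkin diagram are symmetric about the middle.

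First, Proposition \ref{prop: ind for L_0} applied to the $\GL_{2n}/\Sp_{2n}$ factor expresses $\zeta_{T_1}^{\GL_{2n}/\Sp_{2n}}(\cdot|\theta_1)$ as the scalar $|T_1^F|/|T_1^F|_{q\mapsto q^2}$ times an induction of $\zeta_{T_1}^{\GL_n}(\cdot|\theta_1)$ from the $L_0$-subgroup of $\GL_{2n}$. Since bi-invariant induction is multiplicative across the direct product decomposition $L=\GL_{2n}\times(\GL_m\times\GL_m)$, and the $\GL_m\times\GL_m$ factor is left untouched (so its contribution is the identity), this gives
\begin{equation*}
    \zeta_T^{L/H_L}(\cdot|\theta)=\frac{|T_1^F|}{|T_1^F|_{q\mapsto q^2}}\Ind_{L_{00}\subseteq Q}^{L/H_L}\bigl(\zeta_{T_1}^{\GL_n}(\cdot|\theta_1)\otimes \zeta_{T_2}^{\GL_m}(\cdot|\theta_2)\bigr)
\end{equation*}
for an appropriate rational $\iota$-stable parabolic $Q$ of $L_{00}$ in $L$.

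The next step is to apply $\Ind_{L\subseteq P}^{G/H}$ and use Proposition \ref{prop: transitivity for functions} first to absorb the two inductions into a single one through $L_{00}$ and then to re-factor through $L_0$:
\begin{equation*}
    \Ind_{L\subseteq P}^{G/H}(\zeta_T^{L/H_L})=\frac{|T_1^F|}{|T_1^F|_{q\mapsto q^2}}\Ind_{L_0\subseteq P_0}^{G/H}\circ \Ind_{L_{00}\subseteq R}^{L_0/H_{L_0}}\bigl(\zeta_{T_1}^{\GL_n}(\cdot|\theta_1)\otimes \zeta_{T_2}^{\GL_m}(\cdot|\theta_2)\bigr)
\end{equation*}
for a suitable rational $\iota$-stable parabolic $R$ of $L_{00}$ in $L_0$. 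Under the identification $L_0/H_{L_0}\cong \GL_{n+m}$ recalled before Proposition \ref{prop: ind for L_0}, the inner bi-invariant induction coincides with ordinary parabolic induction from $\GL_n\times\GL_m$ to $\GL_{n+m}$, and the transitivity of Deligne-Lusztig characters in $\GL_{n+m}$ then converts $\zeta_{T_1}^{\GL_n}(\cdot|\theta_1)\otimes \zeta_{T_2}^{\GL_m}(\cdot|\theta_2)$ into $\zeta_T^{\GL_{n+m}}(\cdot|\theta)$ with $\theta=\theta_1\otimes\theta_2$. A final application of Proposition \ref{prop: ind for L_0} to $L_0\subseteq G$ contributes $|T^F|_{q\mapsto q^2}/|T^F|$, and combining with the earlier scalar using $|T^F|=|T_1^F||T_2^F|$ collapses the prefactor to the claimed $|T_2^F|_{q\mapsto q^2}/|T_2^F|$.

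The main obstacle will be the bookkeeping: verifying that the parabolic subgroups $Q\subseteq P$ and $R\subseteq P_0$ attached to $L_{00}$ can simultaneously be chosen rational and $\iota$-stable (which follows from the Dynkin-diagram classification of $\iota$-stable parabolics), and checking that the $L_0$-correspondence really intertwines bi-invariant induction with ordinary parabolic induction on $\GL_{n+m}$ — this is precisely the compatibility recorded in the discussion preceding Proposition \ref{prop: ind for L_0}, so once it is invoked the calculation of scalars is routine.
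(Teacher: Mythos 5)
Your proposal is correct and follows essentially the same route as the paper: the intermediate Levi $L_{00}$ is exactly the paper's $L\cap L_0$, and the paper likewise applies Proposition \ref{prop: ind for L_0} on the $\GL_{2n}$ factor, re-routes via Proposition \ref{prop: transitivity for functions} through $L_0$, identifies the inner induction with ordinary Harish-Chandra induction of Deligne--Lusztig characters on $\GL_{n+m}$, and finishes with a second application of Proposition \ref{prop: ind for L_0}, with the identical scalar bookkeeping via $|T^F|=|T_1^F||T_2^F|$. The one point you flag as "bookkeeping" is resolved in the paper by choosing a \emph{single} rational $\iota$-stable parabolic $P'$ with Levi factor $L\cap L_0$ contained in both $P$ and $P_0$ (so that both applications of transitivity go through the same $\Ind^{G/H}_{L\cap L_0\subseteq P'}$), obtained from a rational $\iota$-stable Borel in $P$ together with the parabolic $P_0$ corresponding to all simple roots except the middle one.
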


\begin{proof}
Let $L_0$ and $P_0$ be chosen so that there is a parabolic subgroup $P'$ with Levi factor $L_0\cap L$ contained in both $P$ and $P_0$. This can be done by taking a rational $\iota$-stable Borel in $P$, and choosing $P_0$ to be the parabolic corresponding to the subset of all simple roots except the middle one.

Using Proposition \ref{prop: ind for L_0} on the $\GL_{2n}$ factor write
\begin{equation*}
    \Ind_{L\subseteq P}^{G/H}(\zeta_{T}^{L/H_L}(\cdot|\theta))=\frac{|T_1^F|}{|T_1^F|_{q\mapsto q^2}}\Ind_{L\subseteq P}^{G/H}(\Ind_{L\cap L_0\subseteq P'\cap L}^{L/H_L}(\zeta_{T}^{L\cap L_0/H_{L\cap L_0}}(\cdot|\theta))).
\end{equation*}
Then by Proposition \ref{prop: transitivity for functions}, as $P'$ is contained in both $P$ and $P_0$,
\begin{equation*}
    \Ind_{L\subseteq P}^{G/H}\circ\Ind_{L\cap L_0\subseteq P'\cap L}^L=\Ind^{G/H}_{L\cap L_0\subseteq P'}=\Ind_{L_0\subseteq P_0}^{G/H}\circ \Ind_{L\cap L_0\subseteq P'\cap L_0}^{L_0/H_{L_0}}.
\end{equation*}
Now $\Ind_{L\cap L_0\subseteq P'\cap L_0}^{L_0/H_{L_0}}$ is the usual parabolic induction on $\GL_n$ and so
\begin{equation*}
    \Ind_{L\cap L_0\subseteq P'\cap L_0}^{L_0/H_{L_0}}(\zeta_{T}^{L_0\cap L/H_{L_0\cap L}}(\cdot|\theta))=\zeta_{T}^{L_0/H_{L_0}}(\cdot|\theta)
\end{equation*}
and a final application of Proposition \ref{prop: ind for L_0} establishes the result.
\end{proof}

\section{The Characteristic Map}

\label{sec:characteristic map}
In this section, a characteristic map 
\begin{equation*}
\ch:\mathbf{C}[\Spq{2n}\backslash \GLq{2n}/\Spq{2n}]\rightarrow \bigotimes_{f\in O(\mathbf{M})}\Lambda
\end{equation*}
is constructed from the $\Spq{2n}$ bi-invariant functions on $\GLq{2n}$. Here the tensor product $\otimes \Lambda$ is taken over irreducible $f$ and each factor is isomorphic to a copy of the symmetric functions. The notational convention will be to drop the tensor and if $p\in \Lambda$ corresponds to the factor indexed by $f$, then write $p(f)=p(x_{1,f},\dotsc)$. Before proceeding, the theory for $\GLq{n}$ is reviewed because the construction for $\GLq{2n}/\Spq{2n}$ is both very similar and some facts about the characteristic map for $\GLq{n}$ are used in the proofs. The exposition follows \cite{M95}.

\subsection{The $\GLq{n}$ Theory}
Define
\begin{equation*}
\ch_{\GL}: \bigoplus_{n} \mathbf{C}[\GLq{n}]^{\GLq{n}}\rightarrow \bigotimes_{f\in O(\mathbf{M})}\Lambda
\end{equation*}
by
\begin{equation*}
\ch_{\GL_n}(I_{C_\mu})=\prod _{f\in O(\mathbf{M})}q_f^{-n(\mu(f))}P_{\mu(f)}(f;q_f^{-1}),
\end{equation*}
where $I_{C_\mu}$ denotes the indicator function for $C_\mu$ with $\mu:O(\mathbf{M})\rightarrow \mathcal{P}$ and $P_\mu(x;t)$ is the Hall-Littlewood symmetric function.

Define a multiplication on $\oplus \mathbf{C}[\GLq{n}]^{\GLq{n}}$ given by parabolic induction. That is, given two class functions, $f$ on $\GLq{n}$ and $g$ on $\GLq{m}$, define a class function on $\GLq{n+m}$ by embedding $\GLq{n}\times \GLq{m}$ as a standard Levi subgroup, viewing $f\times g$ as a function on this Levi subgroup, extending to a rational parabolic subgroup by the canonical quotient, and then inducing to the full group. With this multiplication, $\ch$ is an isomorphism of graded algebras on $\oplus_n \mathbf{C}[\GLq{n}]^{\GLq{n}}$. In addition, this map is an isometry when $\mathbf{C}[\GLq{n}]^{\GLq{n}}$ is equipped with the usual inner product and $\Lambda$ with the inner product defined by
\begin{equation*}
\langle p_\mu,p_\mu\rangle_{\GL_n}= \prod _{f\in O(\mathbf{M})}z_{\mu(f)}\prod _i\frac{1}{q_f^{\mu(f)_i}-1}
\end{equation*}
for $\mu:O(\mathbf{M})\rightarrow \mathcal{P}$.

Then define
\begin{align*}
\widetilde{p}_n(x)&:=\begin{cases}
p_{n/d(f_x)}(f_x)&\text{if }d(f_x)|n
\\0&\text{else}
\end{cases},
\\\widetilde{p}_n(\xi)&:=\begin{cases}
(-1)^{n-1}\sum _{x\in \mathbf{M}_n}\xi(x)\widetilde{p}_n(x)&\text{if }\xi\in \mathbf{L}_n
\\0&\text{else}
\end{cases},
\\p_n(\varphi)&:=\widetilde{p}_{nd(\varphi)}(\xi),
\end{align*}
where $\xi\in \varphi$ for $\varphi\in O(\mathbf{L})$. Since they are algebraically independent, the $p_n(\varphi)$ may be viewed as power sum symmetric functions in "dual variables $x_{i,\varphi}$" (that is, they may be formally viewed as symmetric functions in these variables, and the definition of $p_n(\varphi)=p_n(x_{i,\varphi})$ already available can be used to define other symmetric functions, e.g. $s_\lambda(\varphi), e_\lambda(\varphi)$).

As a matter of convention, $\mu$ will denote functions $O(\mathbf{M})\rightarrow \mathcal{P}$ and $\lambda$ will denote functions $O(\mathbf{L})\rightarrow \mathcal{P}$, and symmetric functions indexed by $\mu$ will always be in variables $x_{i,f}$ for $f\in O(\mathbf{M})$ and symmetric functions indexed by $\lambda$ will always be in the variables $x_{i,\varphi}$. Symmetric functions labeled by a partition valued function are interpreted as a product. Thus,
\begin{equation*}
p_\mu=\prod _{f\in O(\mathbf{M})}p_{\mu(f)}(f).
\end{equation*}

Now the characteristic map on the characters can be computed as
\begin{equation*}
\ch_{\GL_n}(\chi_\lambda)=s_{\lambda}
\end{equation*}
where the $s_\lambda$ denote the Schur functions. Note by orthonormality of the characters that the inner product on the dual variables is just the standard Hall inner product. That is,
\begin{equation*}
\langle p_\lambda,p_\lambda\rangle_{\GL_n}=z_\lambda.
\end{equation*}

Finally, note that if $T$ is a rational maximal torus in $\GL_n$ with $T^F\cong \mathbf{M}_{k_1}\times \dotsm\times \mathbf{M}_{k_r}$ and $\theta$ an irreducible character of $T^F$, then
\begin{equation*}
\ch_{\GL_n}(\zeta_{T}^{\GL_n}(\cdot|\theta))=(-1)^{n-r}\prod _i p_{k_i/d(\varphi_i)}(\varphi_i)=(-1)^{n-l(\lambda)}p_\lambda,
\end{equation*}
where $\varphi_i$ is the orbit of $\theta|_{\mathbf{M}_{k_i}}$ and $\lambda$ is the combinatorial data associated to $(T,\theta)$. In fact, this could be used as a definition of the Deligne-Lusztig characters for $\GLq{n}$.

\subsection{Definition of the Characteristic Map}
Now return to the case of interest. Define the inner product $\langle f,g\rangle =\sum _{x\in \GLq{2n}}f(x)\overline{g(x)}$ on $\mathbf{C}[\Spq{2n}\backslash \GLq{2n}/\Spq{2n}]$ and the inner product on $\otimes \Lambda$ by 
\begin{equation*}
\langle p_\mu,p_\mu\rangle=z_\mu \prod _{f\in O(\mathbf{M})}\prod _i\frac{1}{q_f^{2\mu(f)_i}-1}.
\end{equation*}

Define the characteristic map $\ch$ by
\begin{equation*}
\ch(I_{H^Fg_\mu H^F})=\prod _{f\in O(\mathbf{M})}q_f^{-2n(\mu(f))}P_{\mu(f)}(f;q_f^{-2})
\end{equation*}
and extending linearly.

Note that the coefficients of $p_\mu$ in $\ch(I_{H^Fg_\mu H^F})$ (which are rational in $q$), are the same as those in $\ch_{\GL_n}(I_{C_\mu})$ but with $q$ replaced with $q^2$. It is natural to expect in light of Theorem \ref{thm: spherical function formula} that the same is true for the spherical functions. To establish this, the following lemma is needed.

\begin{lemma}
Let $t$ denote a formal variable and use $\zeta_T^{\GL_n}(C_\mu|\theta,t)$ to denote the Deligne-Lusztig character $\zeta_T^{\GL_n}(\cdot|\theta)$, which is a rational function in $q$, in terms of $t$. Here, $T^F\cong \prod_{i=1}^r \mathbf{M}_{k_i}$ and $\theta=\prod \theta_i$ with $\theta_i\in \varphi_i$. Then
\begin{equation*}
\begin{split}
\sum _{\mu}\zeta_T^{\GL_n}(M_\mu|\theta,t)\prod _{f\in O(\mathbf{M})}t_f^{-n(\mu(f))}P_{\mu(f)}(f;t_f^{-1})&= (-1)^{n-r}\prod _{i}p_{k_i/d(\varphi_i)}(\varphi_i)
\\&=(-1)^{n-l(\lambda)}p_\lambda
\end{split}
\end{equation*}
as polynomials with coefficients in $\mathbf{C}(t)$, where $\lambda$ is the combinatorial data associated to $(T,\theta)$ and $M_\mu$ lies in the conjugacy class indexed by $\mu$.
\end{lemma}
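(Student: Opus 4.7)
The plan is to recognize the left-hand side, after specializing $t$ to a prime power $q$, as the value of the $\GL_n$-characteristic map on the Deligne-Lusztig character $\zeta_T^{\GL_n}(\cdot|\theta)$. Indeed, expanding this class function in the conjugacy class indicator basis as $\zeta_T^{\GL_n}(\cdot|\theta)=\sum_\mu \zeta_T^{\GL_n}(M_\mu|\theta)I_{C_\mu}$, applying linearity of $\ch_{\GL_n}$, and substituting the definition $\ch_{\GL_n}(I_{C_\mu})=\prod_f q_f^{-n(\mu(f))}P_{\mu(f)}(f;q_f^{-1})$ reproduces the left-hand side at $t=q$. The evaluation $\ch_{\GL_n}(\zeta_T^{\GL_n}(\cdot|\theta))=(-1)^{n-l(\lambda)}p_\lambda$ recalled just before the lemma then identifies this common value with the right-hand side.

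To promote the identity from each prime-power specialization to a formal identity in the variable $t$, I would observe that both sides define elements of $\mathbf{C}(t)\otimes\Lambda$: the left-hand side because the Green polynomials appearing in the Deligne-Lusztig formula are polynomials in $q_f=t^{d(f)}$ and the Hall-Littlewood polynomials have coefficients rational in $t_f^{-1}$; the right-hand side manifestly, since it does not involve $t$ at all. Since equality of two rational functions at infinitely many values (all prime powers) forces equality as rational functions, this concludes the argument.

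The main point requiring care is the uniformity claim: although $\zeta_T^{\GL_n}(M_\mu|\theta)$ is defined numerically for each prime power $q$, one needs to know that for fixed combinatorial data $\lambda$ it really is the specialization of a single rational function in $t$. This follows from the explicit formula $\zeta_T^{\GL_n}(g|\theta)=\sum_{t\in T,\,|\gamma_t(f)|=|\mu(f)|}\theta(t)\prod_f Q_{\gamma_t(f)}^{\mu(f)}(q_f)$ recalled in the preliminaries, once the sum is reorganized via the $q$-independent indexing by partition-valued functions $\gamma$ associated to the combinatorial data of $T$.
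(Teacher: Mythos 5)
Your first step is fine: at $t=q$ the left-hand side is exactly $\ch_{\GL_n}(\zeta_T^{\GL_n}(\cdot|\theta))$ by linearity and the definition of $\ch_{\GL_n}$ on indicator functions, and the recalled evaluation $\ch_{\GL_n}(\zeta_T^{\GL_n}(\cdot|\theta))=(-1)^{n-l(\lambda)}p_\lambda$ gives the identity at that one point. The gap is in the interpolation. The rational function $\zeta_T^{\GL_n}(C_\mu|\theta,t)$ is attached to a \emph{fixed} pair $(T,\theta)$ over a \emph{fixed} $\mathbf{F}_q$: its coefficients are the character values $\theta(x)$ for $x\in T^F$, and both these values and the indexing sets ($O(\mathbf{M})$, $O(\mathbf{L})$, hence the range of $\mu$ and the dual variables on the right-hand side) change when the prime power changes. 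So you do not have a single pair of rational functions in $t$ agreeing at infinitely many points; you have, for each prime power, a \emph{different} identity verified at exactly one point. The ``uniformity'' you flag cannot be rescued by reorganizing the sum over $\gamma$, because the numbers $\sum_{x:\gamma_x=\gamma}\theta(x)$ still depend on the specific $(T,\theta)$ over the specific field. As written, your argument only proves the lemma at $t=q$, which is not even enough for its use in Proposition \ref{prop: characteristic of DL character}, where it is applied at $t=q^2$.

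The repair is short and is essentially the paper's proof. Fix $(T,\theta)$ over $\mathbf{F}_q$ and group the explicit formula by the type $\gamma_x$ of $x\in T^F$; the left-hand side becomes
\begin{equation*}
\sum_{x\in T^F}\theta(x)\sum_{\mu}\prod_{f\in O(\mathbf{M})}Q_{\gamma_x(f)}^{\mu(f)}(t_f)\,t_f^{-n(\mu(f))}P_{\mu(f)}(f;t_f^{-1})
=\sum_{x\in T^F}\theta(x)\,p_{\gamma_x},
\end{equation*}
by the change-of-basis identity $p_\rho=\sum_\mu Q_\rho^\mu(t)t^{-n(\mu)}P_\mu(x;t^{-1})$, which holds for a formal parameter $t$. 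This shows the left-hand side is literally independent of $t$, and the remaining sum $\sum_x\theta(x)p_{\gamma_x}$ is identified with $(-1)^{n-r}\prod_i p_{k_i/d(\varphi_i)}(\varphi_i)$ directly from the definition of the dual power sums (or, if you prefer, by your one specialization at $t=q$). The paper runs exactly this computation starting from the right-hand side.
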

\begin{proof}
This proof follows that of Theorem 4.2 in \cite{TV07}, where it is proven for the case of $t=-q$, but the proof given works for a formal parameter.

It suffices to show that that the coefficient of $P_{\mu(f)}(f;q_f^{-1})$ in $\prod p_{k_i/d(\varphi_i)}(\varphi_i)$ is $\zeta_T^{\GL_n}(C_\mu|\theta,t)$. Write
\begin{equation*}
\begin{split}
(-1)^{n-r}\prod_i p_{k_i/d(\varphi_i)}(\varphi_i)&=\prod_i \sum _{x_i\in \mathbf{M}_{k_i}}\theta_i(x_i)p_{k_i/d(f_{x_i})}(f_{x_i})
\\&=\sum _{x\in T}\theta(x)\prod _i p_{k_i/d(f_{x_i})}(f_{x_i}).
\end{split}
\end{equation*}
Now rewrite $\prod _i p_{k_i/d(f_{x_i})}(f_{x_i})=\prod _{f\in O(\mathbf{M})}p_{\gamma_x(f)}(f)$ and use the fact that the Green polynomials are the change of basis from power sum to Hall-Littlewood polynomials to obtain
\begin{equation*}
\begin{split}
&(-1)^{n-r}\prod p_{k_i/d(\varphi_i)}(\varphi_i)
\\=&\sum _{x\in T^F}\theta(x)\prod _{f\in O(\mathbf{M})}\left(\sum _{|\mu(f)|=|\gamma_x(f)|}Q_{\gamma_x(f)}^{\mu(f)}(t_f)t_f^{-n(\mu(f))}P_{\mu(f)}(f,t_f^{-1})\right)
\\=&\sum _{\mu}\left(\sum _{x\in T,|\gamma_x(f)|=|\mu(f)|}\theta(t)Q_{\gamma_x}^{\mu}(t)\right)\prod _{f\in O(\mathbf{M})}t_f^{-n(\mu(f))}P_{\mu(f)}(f,t_f^{-1})
\\=&\sum _\mu \zeta_T^{\GL_n}(C_\mu|\theta,t)\prod _{f\in O(\mathbf{M})}t_f^{-n(\mu(f))}P_{\mu(f)}(f,t_f^{-1})
\end{split}
\end{equation*}
as required.
\end{proof}

\begin{proposition}
\label{prop: characteristic of DL character}
Let $T$ be a rational maximal torus of $\GL_n$ with $T^F\cong \mathbf{M}_{k_1}\times \dotsm\times \mathbf{M}_{k_r}$ and $\theta$ a character of $T^F$. Then
\begin{equation*}
\ch(\zeta_{T}^{\GL_{2n}/\Sp_{2n}}(\cdot|\theta))=(-1)^{n-r}\prod _{i}p_{k_i/d(\varphi_i)}(\varphi_i)=(-1)^{n-l(\lambda)}p_\lambda,
\end{equation*}
where $\varphi_i$ is the $F$-orbit of $\theta|_{\mathbf{M}_{k_i}}\in \mathbf{L}$ and $\lambda$ is the combinatorial data associated to $(T,\theta)$. 
\end{proposition}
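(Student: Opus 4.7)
The plan is to unwind the definitions on both sides and reduce everything to the preceding lemma, applied with $t=q^2$. Concretely, since the basic function $\zeta_T^{\GL_{2n}/\Sp_{2n}}(\cdot|\theta)$ is $H^F$ bi-invariant and constant on double cosets, the first step is to expand it in the basis of double coset indicator functions as
\begin{equation*}
\zeta_T^{\GL_{2n}/\Sp_{2n}}(\cdot|\theta) = \sum_{\mu} \zeta_T^{\GL_{2n}/\Sp_{2n}}(g_\mu|\theta)\, I_{H^F g_\mu H^F},
\end{equation*}
where the sum runs over $\mu:O(\mathbf{M})\to \mathcal{P}$ with $\|\mu\|=n$. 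By the definition of the basic function and the convention that $\zeta_T^{\GL_n}(\cdot|\theta)$ is extended from $\GLq{n}$ to a bi-invariant function on $\GLq{2n}$ via $g_\mu\mapsto M_\mu$, the coefficient equals $\zeta_T^{\GL_n}(M_\mu|\theta)_{q\mapsto q^2}$.

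Next, I apply $\ch$ term by term using its definition on the indicator functions $I_{H^F g_\mu H^F}$. This yields
\begin{equation*}
\ch(\zeta_T^{\GL_{2n}/\Sp_{2n}}(\cdot|\theta)) = \sum_{\mu} \zeta_T^{\GL_n}(M_\mu|\theta)_{q\mapsto q^2} \prod_{f\in O(\mathbf{M})} q_f^{-2n(\mu(f))} P_{\mu(f)}(f;q_f^{-2}).
\end{equation*}
The point is that both the Deligne-Lusztig character values and the coefficients defining $\ch$ undergo the same formal substitution $q\mapsto q^2$ (equivalently $q_f\mapsto q_f^2$) relative to the corresponding ingredients in the $\GLq{n}$ characteristic map on conjugacy class indicators.

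Finally, I invoke the preceding lemma, which is an identity of polynomials in $\mathbf{C}(t)$ and therefore valid under any specialization of the formal parameter. Specializing $t\mapsto q^2$ (so that $t_f \mapsto q_f^2$) converts the left side of the lemma into exactly the expression displayed above, while the right side $(-1)^{n-r}\prod_i p_{k_i/d(\varphi_i)}(\varphi_i)$ is independent of $t$ since the $p_n(\varphi)$ are defined with no $q$-dependence. Rewriting the right side as $(-1)^{n-l(\lambda)} p_\lambda$ using the definition of the combinatorial data $\lambda$ associated to $(T,\theta)$ completes the proof.

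The bulk of the work has already been done in the lemma, so no real obstacle remains here; the only care needed is to verify that the specialization $t\mapsto q^2$ is legitimate (which it is, since the lemma's identity holds in $\mathbf{C}(t)$ and the rational functions involved have no pole at $t=q^2$ for $q$ a prime power) and to track the correspondence between the $q\mapsto q^2$ substitution defining basic functions and the specialization of the formal parameter in the lemma.
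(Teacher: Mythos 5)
Your proposal is correct and follows exactly the paper's argument: expand the basic function in the basis of double coset indicators with coefficients $\zeta_T^{\GL_n}(M_\mu|\theta)_{q\mapsto q^2}$, apply $\ch$ to each indicator, and invoke the preceding lemma at $t=q^2$. No issues.
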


\begin{proof}
Write
\begin{equation*}
\zeta_{T}^{\GL_{2n}/\Sp_{2n}}(\cdot|\theta)=\sum_\mu \zeta_{T}^{\GL_n}(M_\mu |\theta)_{q\mapsto q^2} I_{H^Fg_\mu H^F}
\end{equation*}
and apply the characteristic function, giving
\begin{equation*}
\sum _{\mu}\zeta_{T}^{\GL_n}(M_\mu|\theta)_{q\mapsto q^2}\prod _{f\in O(\mathbf{M})}q_f^{-2n(\mu(f))}P_{\mu(f)}(f;q_f^{-2})=(-1)^{n-r}\prod_i p_{k_i/d(\varphi_i)}(\varphi_i)
\end{equation*}
by the lemma with $t=q^2$.
\end{proof}

Now $\ch(\phi_\lambda)$ may be computed.
\begin{proposition}
\label{prop: ch of spherical func}
Let $\lambda:O(\mathbf{L})\rightarrow \mathcal{P}$ with $\|\lambda\|=n$. Then
\begin{equation*}
\ch(\phi_{\lambda})=\frac{(-1)^{|\lambda|}}{\psi_n(q^2)}\prod _{\varphi\in O(\mathbf{L})}q_\varphi^{-n(\lambda'(\varphi))} J_{\lambda(\varphi)}(q_\varphi,q_\varphi^2).
\end{equation*}
\end{proposition}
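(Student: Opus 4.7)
The plan is to apply $\ch$ to the formula for $\phi_\lambda$ in Theorem~\ref{thm: spherical function formula} and simplify the resulting sum term by term. First, I would replace each $\zeta_{T_w}^{G/H}(\cdot|\theta_w)$ by its characteristic $(-1)^{n-l(\gamma_w)}p_{\gamma_w}$ from Proposition~\ref{prop: characteristic of DL character}, where $\gamma_w:O(\mathbf{L})\to\mathcal{P}$ is the combinatorial datum of $(T_w,\theta_w)$ and $\gamma_w(\varphi)=\mathrm{cyc}(w(\varphi))$ under the identification $W(\lambda)\cong\prod_\varphi S_{|\lambda(\varphi)|}$.

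Next I would compute the denominator $|T_w|\zeta_{T_w}^{G/H}(1|\theta_w)$ via the Deligne--Lusztig degree formula $\zeta_T^{\GL_n}(1|\theta)=\epsilon_G\epsilon_T|G^F|_{p'}/|T^F|$ after the substitution $q\mapsto q^2$; this gives
\begin{equation*}
|T_w|\,\zeta_{T_w}^{G/H}(1|\theta_w)=\frac{(-1)^{n-l(\gamma_w)}\psi_n(q^2)}{\prod_i(q^{k_{w,i}}+1)},
\end{equation*}
so the sign $(-1)^{n-l(\gamma_w)}$ cancels the one from Proposition~\ref{prop: characteristic of DL character}, and the $w$-independent factor $1/\psi_n(q^2)$ extracts out of the sum.

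What remains factors over $\varphi\in O(\mathbf{L})$ into independent sums over $S_{|\lambda(\varphi)|}$. In each $\varphi$-factor I would use the identity
\begin{equation*}
\sgn(w)\prod_i\bigl(q_\varphi^{\mathrm{cyc}(w)_i}+1\bigr)p_{\mathrm{cyc}(w)}(\varphi)=\omega_{q_\varphi^2,q_\varphi}\bigl(p_{\mathrm{cyc}(w)}(\varphi)\bigr),
\end{equation*}
which is immediate from $\omega_{q,t}(p_n)=(-1)^{n-1}(q^n-1)/(t^n-1)p_n$. Pulling $\omega_{q_\varphi^2,q_\varphi}$ outside the sum and applying the Frobenius-type defining property $P_{\lambda(\varphi)'}(q_\varphi^2,q_\varphi)=\tfrac{1}{|\lambda(\varphi)|!}\sum_w d_{\lambda(\varphi)'}(w)(q_\varphi)p_{\mathrm{cyc}(w)}$ of the $d$-coefficients collapses the inner sum to $\omega_{q_\varphi^2,q_\varphi}(P_{\lambda(\varphi)'}(q_\varphi^2,q_\varphi))=Q_{\lambda(\varphi)}(q_\varphi,q_\varphi^2)$.

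Finally, the identity $c_{\nu'}(t,q)=c'_\nu(q,t)$ (obtained by swapping arm and leg lengths under partition conjugation) rewrites the prefactor $c_{\lambda(\varphi)'}(q_\varphi^2,q_\varphi)$ as $c'_{\lambda(\varphi)}(q_\varphi,q_\varphi^2)$, so that $c_{\lambda(\varphi)'}(q_\varphi^2,q_\varphi)\,Q_{\lambda(\varphi)}(q_\varphi,q_\varphi^2)=c_{\lambda(\varphi)}(q_\varphi,q_\varphi^2)P_{\lambda(\varphi)}(q_\varphi,q_\varphi^2)=J_{\lambda(\varphi)}(q_\varphi,q_\varphi^2)$. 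Assembling the pieces yields the claim. The main technical obstacle will be careful bookkeeping of the four sign sources --- $(-1)^{|\lambda|}$ from Theorem~\ref{thm: spherical function formula}, $\sgn(w)$, $(-1)^{n-l(\gamma_w)}$ from Proposition~\ref{prop: characteristic of DL character}, and the sign in the Deligne--Lusztig degree formula --- to verify that they conspire to leave exactly the $(-1)^{|\lambda|}$ appearing in the target.
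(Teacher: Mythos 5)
Your proposal is correct and follows essentially the same route as the paper's proof: apply $\ch$ termwise via Proposition \ref{prop: characteristic of DL character}, evaluate $|T_w|\zeta_{T_w}^{G/H}(1|\theta_w)$ by the degree formula so the signs $(-1)^{n-l(\gamma_w)}$ cancel, recognize $\sgn(w)\prod_i(q_\varphi^{\mathrm{cyc}(w)_i}+1)$ as the eigenvalue of $\omega_{q_\varphi^2,q_\varphi}$ on $p_{\mathrm{cyc}(w)}$, collapse the Weyl-group sum to $P_{\lambda(\varphi)'}(q_\varphi^2,q_\varphi)$, and convert via $\omega_{q^2,q}$ and $c_{\lambda'}(t,q)=c'_\lambda(q,t)$ to $J_{\lambda(\varphi)}(q_\varphi,q_\varphi^2)$. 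The only cosmetic difference is that you apply $\omega$ before multiplying by the $c$-prefactor while the paper does it in the other order.
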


\begin{proof}
First, note that
\begin{equation*}
\begin{split}
\phi_\lambda=&\frac{(-1)^{|\lambda|}}{|W(\lambda)|}\sum _{w\in W(\lambda)}\left(\prod _{\varphi\in O(\mathbf{L})}q_\varphi^{-n(\lambda(\varphi)')}c_{\lambda(\varphi)'}(q_\varphi^2,q_\varphi)d_{\lambda(\varphi)'}(w)(q_\varphi)\right)
\\&\qquad\qquad\qquad\qquad\times\sgn(w)\frac{\zeta_{T_w}^{\GL_n}(\cdot|\theta_w)_{q\mapsto q^2}}{|T^F_w|\zeta_{T_w}^{\GL_n}(1|\theta_w)_{q\mapsto q^2}}
\end{split}
\end{equation*}
from Theorem \ref{thm: spherical function formula}. Recall that $c_\lambda$ denotes the scaling from $P_\lambda$ to $J_\lambda$, $d_\lambda$ denotes the change of basis from power sum symmetric functions to Macdonald symmetric functions and $\zeta_T^G$ denotes a Deligne-Lusztig character.

Now $\zeta_{T_w}^{\GL_n}(1)=(-1)^{n-l(w)}|\GL_{n}^F/T_w^F|_{q'}$ (see \cite[Thoerem 7.5.1]{C93}), where for an integer $n$, $n_{q'}$ denotes the $q$-free part. Here $l(w)$ denotes the length of the partition corresponding to $w$ and not the Coxeter length. Thus, $\zeta_{T}^{\GL_n}(1)=(-1)^{n-l(w)}\psi_n(q^2)/|T^F_w|_{q\mapsto q^2}$. Then apply $\ch$, obtaining
\begin{equation*}
\begin{split}
&\frac{(-1)^{|\lambda|}}{|W(\lambda)|}\sum _{w\in W(\lambda)}\left(\prod _{\varphi\in O(\mathbf{L})}q_\varphi^{-n(\lambda(\varphi)')}c_{\lambda(\varphi)'}(q_\varphi^2,q_\varphi)d_{\lambda(\varphi)'}(w)(q_\varphi)\right)
\\&\qquad\qquad\qquad\qquad\times(-1)^{|\lambda|-l(w)}\frac{\prod_{\varphi\in O(\mathbf{L})}p_{w(\varphi)}(\varphi) |T^F_w|_{q\mapsto q^2}}{|T^F_w|\psi_n(q^2)}
\end{split}
\end{equation*}
as applying Proposition \ref{prop: characteristic of DL character} gives
\begin{equation*}
\ch(\zeta_{T_w}^{\GL_n}(\cdot|\theta_{w})_{q\mapsto q^2})=(-1)^{n-l(w)}\prod _{\varphi\in O(\mathbf{L})}p_{w(\varphi)}(\varphi).
\end{equation*}

Next, notice that if $\omega_{q,t}$ denotes the automorphism of $\otimes \Lambda$ which sends $p_n(\varphi)$ to $(-1)^{n-1}(q_\varphi^{n}-1)/(t_\varphi^{n}-1)p_n(\varphi)$ then
\begin{equation*}
\omega_{q^2,q}\prod_{\varphi\in O(\mathbf{L})}p_{w(\varphi)}=(-1)^{|\lambda|-l(w)}\frac{\prod_{\varphi\in O(\mathbf{L})}p_{w(\varphi)}(\varphi) |T^F_w|_{q\mapsto q^2}}{|T^F_w|}
\end{equation*}
and so
\begin{equation*}
\begin{split}
\ch(\phi_\lambda)&=\frac{(-1)^{|\lambda|}}{\psi_n(q^2)}\prod _{\varphi\in O(\mathbf{L})}\left({q_\varphi^{-n(\lambda(\varphi)')}c_{\lambda(\varphi)'}(q_\varphi^2,q_\varphi)}\sum _{w\in S_{|\lambda(\varphi)|}}d_{\lambda(\varphi)'}(w)(q_\varphi)\omega_{q^2,q}p_w(\varphi)\right)
\\&=\omega_{q^2,q}\frac{(-1)^{|\lambda|}}{\psi_n(q^2)}\prod _{\varphi\in O(\mathbf{L})}\left(q_\varphi^{-n(\lambda(\varphi)')}J_{\lambda(\varphi)'}(\varphi;q_\varphi^{2},q_\varphi)\right)
\\&=\frac{(-1)^{|\lambda|}}{\psi_n(q^2)}\prod _{\varphi\in O(\mathbf{L})}q_\varphi^{-n(\lambda(\varphi)')} J_{\lambda(\varphi)}(\varphi;q_\varphi,q_\varphi^2)
\end{split}
\end{equation*}
\end{proof}

\subsection{The Isometry Property}
Next, it will be shown that the map $\ch$ is an isometry, up to a constant.

\begin{lemma}
\label{lemma: isometry}
The map $\ch$ satisfies
\begin{equation*}
\langle \phi,\psi\rangle=q^{-n}|H^F|^2\langle \ch(\phi),\ch(\psi)\rangle.
\end{equation*}
\end{lemma}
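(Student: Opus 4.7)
The plan is to verify the identity on the basis $\{I_{H^F g_\mu H^F}\}_\mu$ of indicator functions of $H^F$-double cosets. By bilinearity, it suffices to check
\begin{equation*}
\langle I_{H^F g_\mu H^F}, I_{H^F g_\nu H^F}\rangle = q^{-n}|H^F|^2 \langle \ch(I_{H^F g_\mu H^F}), \ch(I_{H^F g_\nu H^F})\rangle
\end{equation*}
for all partition-valued $\mu,\nu$ with $\|\mu\|=\|\nu\|=n$. For $\mu\ne \nu$, both sides vanish: on the group side because distinct double cosets are disjoint, and on the symmetric function side because $\ch(I_{H^F g_\mu H^F}) = \prod_f q_f^{-2n(\mu(f))}P_{\mu(f)}(f;q_f^{-2})$ and its $\nu$-analogue involve Hall--Littlewood polynomials either in disjoint variable sets (distinct $f$) or in distinct partitions for a common $f$; orthogonality of Hall--Littlewood polynomials in each factor (our inner product restricted to a single factor being proportional to the Hall--Littlewood inner product at parameter $t=q_f^{-2}$) then gives vanishing.

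The substantive case is $\mu=\nu$. On the group side, $\|I_{H^F g_\mu H^F}\|^2 = |H^F g_\mu H^F| = |H^F|\cdot|C_\mu|_{q\mapsto q^2}$ by Proposition \ref{prop: double coset size}. On the symmetric function side, the key observation is that the map $\ch$ and the inner product on $\bigotimes \Lambda$ are obtained from their $\GL_n$-theory counterparts $\ch_{\GL_n}$ and $\langle\cdot,\cdot\rangle_{\GL_n}$ by the substitution $q \mapsto q^2$. Hence
\begin{equation*}
\|\ch(I_{H^F g_\mu H^F})\|^2 \;=\; \bigl(\|\ch_{\GL_n}(I_{C_\mu})\|^2_{\GL_n}\bigr)_{q\mapsto q^2}.
\end{equation*}
A direct Hall--Littlewood norm computation (using $\langle P_\lambda(t),P_\lambda(t)\rangle_{HL,t} = b_\lambda(t)^{-1}$ and the rescaling between the $\GL_n$ inner product and the standard Hall--Littlewood one) yields $\|\ch_{\GL_n}(I_{C_\mu})\|^2_{\GL_n} = 1/a_\mu(q) = |C_\mu|/|\GLq{n}|$; applying $q\mapsto q^2$ gives $\|\ch(I_{H^F g_\mu H^F})\|^2 = |C_\mu|_{q\mapsto q^2}/|\GLq{n}|_{q\mapsto q^2}$.

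Combining, the ratio of the two norms is $|H^F|\cdot|\GLq{n}|_{q\mapsto q^2}$, so the lemma reduces to the identity $|H^F|\cdot|\GLq{n}|_{q\mapsto q^2} = q^{-n}|H^F|^2$, i.e.\ $|\GLq{n}|_{q\mapsto q^2} = q^{-n}|H^F|$. This is a direct verification from the explicit orders: $|H^F| = |\Spq{2n}| = q^{n^2}\prod_{i=1}^n(q^{2i}-1)$, while $|\GLq{n}|_{q\mapsto q^2} = \prod_{i=0}^{n-1}(q^{2n}-q^{2i}) = q^{n(n-1)}\prod_{i=1}^n(q^{2i}-1)$, and their ratio is $q^n$.

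The main obstacle is the Hall--Littlewood norm bookkeeping, since our inner product differs from the standard Hall--Littlewood one by $\mu$-dependent factors of $\prod q_f^{-|\mu(f)|}$; performing this calculation once in the $\GL_n$ setting (or extracting it from the $\GL_n$ characteristic map theory recalled in the previous subsection) and then transporting via $q\mapsto q^2$ avoids redoing it from scratch and makes the role of Proposition \ref{prop: double coset size} — supplying the extra factor of $|H^F|$ that produces the final constant — transparent.
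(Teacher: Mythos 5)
Your proof is correct and follows essentially the same route as the paper's: reduce to indicator functions of double cosets, identify $\|\ch(I_{H^Fg_\mu H^F})\|^2$ with $(1/a_\mu(q))_{q\mapsto q^2}=1/a_\mu(q^2)$ via the $q\mapsto q^2$ relation to the $\GL_n$ characteristic map, and conclude with Proposition \ref{prop: double coset size} together with the order computation $|\GLq{n}|_{q\mapsto q^2}=q^{-n}|\Spq{2n}|$. Your explicit treatment of the off-diagonal case $\mu\neq\nu$ via Hall--Littlewood orthogonality is a minor point the paper leaves implicit, but otherwise the arguments coincide.
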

\begin{proof}
It suffices to compute the norms of indicator functions. It is clear that $\langle I_{H^Fg_\mu H^F},I_{H^Fg_\mu H^F}\rangle=|H^Fg_\mu H^F|$. Now
\begin{equation*}
\langle \ch(I_{H^Fg_\mu H^F}),\ch(I_{H^Fg_\mu H^F})\rangle =(\langle \ch(I_{C_\mu}),\ch(I_{C_\mu})\rangle _{\GL_n})_{q\mapsto q^2},
\end{equation*}
where $C_\mu$ denotes the conjugacy class of $\GLq{n}$ associated to $\mu$. This can be shown by noting that $\ch(I_{H^Fg_\mu H^F})=\ch(I_{C_\mu})_{q\mapsto q^2}$, and expanding into power sum symmetric functions, and then using the fact that $\langle p_\mu,p_\mu\rangle=(\langle p_\mu,p_\mu\rangle _{\GL_n})_{q\mapsto q^2}$. But 
\begin{equation*}
(\langle \ch(I_{C_\mu}),\ch(I_{C_\mu})\rangle _{\GL_n})_{q\mapsto q^2}=\frac{1}{a_\mu(q^2)}
\end{equation*}
because the characteristic map for $\GLq{n}$ is an isometry. Finally,
\begin{equation*}
\frac{|H^F|^2q^{-n}}{a_\mu(q^2)}=|H^F|\frac{|\GLq{n}|_{q\mapsto q^2}}{a_\mu(q^2)}
\end{equation*}
because $|\Spq{2n}|=q^{n^2}\prod (q^{2i}-1)$ and $|\GLq{n}|_{q\mapsto q^2}=q^{n^2-n}\prod (q^{2i}-1)$. But then 
\begin{equation*}
|C_\mu|_{q\mapsto q^2}=\frac{|\GLq{n}|_{q\mapsto q^2}}{a_\mu(q^2)}
\end{equation*}
since $a_\mu(q)$ is the size of the centralizer of an element in $C_\mu$, and this gives
\begin{equation*}
q^{-n}|H^F|^2\langle \ch(I_{H^Fg_\mu H^F}),\ch(I_{H^Fg_\mu H^F})\rangle=|H^Fg_\mu H^F|
\end{equation*}
using Proposition \ref{prop: double coset size}. This shows that $\ch$ is an isometry up to the specified constant.
\end{proof}

\begin{remark}
The inner product on $\mathbf{C}[\Spq{2n}\backslash \GLq{2n}/\Spq{2n}]$ could be renormalized so that $\ch$ is actually an isometry. This would also remove some of the constants appearing in Proposition \ref{prop: Ind Res adjoint} and so in some sense this inner product would be more natural although to avoid confusion with the usual one this is not done. These constants also appear in the characteristic map for the Gelfand pair $S_{2n}/B_n$, see \cite[VII, \S 2]{M95}.
\end{remark}

\begin{remark}
Lemma \ref{lemma: isometry} could also be proven by computing the norms of spherical functions. Strictly speaking, this is not necessary, but the computation helps illustrate the use of the dual variables so it is included. This computation naturally leads to a proof of Theorem \ref{thm: spherical function formula}, with the triangularity between the $P_\lambda$ and $m_\mu$ the remaining condition to check. This is basically how Theorem \ref{thm: spherical function formula} is proven in \cite{BKS90} and so for the sake of brevity is not reproduced here.
\end{remark}

To compute the analogue for the spherical functions, first $\langle,\rangle$ must be computed on the dual variables $x_{i,\varphi}$ (since the indicator functions are in terms of $f\in O(\mathbf{M})$ and the spherical functions are in terms of $\varphi\in O(\mathbf{L})$).

\begin{proposition}
For $\lambda:O(\mathbf{L})\rightarrow \mathcal{P}$,
\begin{equation*}
\langle p_\lambda,p_\lambda\rangle =z_\lambda\prod _{\varphi\in O(\mathbf{L})}\prod_i \frac{q_\varphi^{\lambda(\varphi)_i}-1}{q_\varphi^{2\lambda(\varphi)_i}-1}.
\end{equation*}
\end{proposition}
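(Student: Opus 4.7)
The plan is to adapt the parallel computation for $\GLq{n}$ in Macdonald IV.4 by tracking where the substitution $q \mapsto q^2$ in the $f$-variable inner product propagates. The computation has two stages: first evaluate $\langle p_n(\varphi), p_m(\varphi')\rangle$ on single power sums, then extend to products by Hopf-compatibility.

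For the single power sum, I would expand $p_n(\varphi) = (-1)^{N-1}\sum_{x \in \mathbf{M}_N}\xi(x)\,p_{N/d(f_x)}(f_x)$ with $N = nd(\varphi)$ and $\xi \in \varphi$, noting that nonvanishing forces $N = nd(\varphi) = md(\varphi')$. Using orthogonality of the $p_k(f)$ (with $\langle p_k(f),p_k(f)\rangle = k/(q_f^{2k}-1)$ and the identity $q_{f_x}^{2N/d(f_x)} = q^{2N}$), the pairing reduces to a sum over $F$-orbits $f \subseteq \mathbf{M}_N$:
\begin{equation*}
\langle p_n(\varphi),p_m(\varphi')\rangle = \frac{N}{q^{2N}-1}\sum_{f\subseteq\mathbf{M}_N}\frac{1}{d(f)}\Bigl(\sum_{x\in f}\xi(x)\Bigr)\Bigl(\sum_{y\in f}\overline{\xi'(y)}\Bigr).
\end{equation*}
Defining $\chi_\varphi := \sum_{\eta \in \varphi}\eta$ and using the orbit-sum identity $\sum_{x\in f}\xi(x) = \tfrac{d(f)}{d(\varphi)}\chi_\varphi(x_0)$ for $x_0 \in f$ (an easy consequence of the joint periodicity of $j \mapsto (F^j\xi)(x_0)$ in periods $d(\varphi)$ and $d(f)$), the expression collapses to $\tfrac{N}{(q^{2N}-1)d(\varphi)d(\varphi')}\sum_{x \in \mathbf{M}_N}\chi_\varphi(x)\overline{\chi_{\varphi'}(x)}$. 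Orthogonality of the characters in $\mathbf{L}_N$ on $\mathbf{M}_N$ evaluates the character sum to $(q^N-1)d(\varphi)\delta_{\varphi,\varphi'}$, yielding
\begin{equation*}
\langle p_n(\varphi),p_m(\varphi')\rangle = \delta_{(n,\varphi),(m,\varphi')}\cdot \frac{n(q_\varphi^n-1)}{q_\varphi^{2n}-1}.
\end{equation*}

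To conclude, each $p_n(\varphi)$ is primitive (being a linear combination of the primitive $p_k(f)$), so Hopf-compatibility of the inner product on $\bigotimes \Lambda$ gives the standard multiplicativity $\langle p_\lambda, p_\lambda\rangle = z_\lambda\prod_{\varphi,i}\gamma_{\lambda(\varphi)_i}/\lambda(\varphi)_i$ with $\gamma_k = k(q_\varphi^k-1)/(q_\varphi^{2k}-1)$, producing the claimed formula. The main obstacle is the combinatorial bookkeeping in the middle step: one must juggle three $F$-orbit structures (on $\mathbf{M}$, on $\mathbf{L}$, and on the finite levels $\mathbf{M}_N$, $\mathbf{L}_N$), and the key cancellation is that the factor $|\mathbf{M}_N| = q^N-1$ from character orthogonality combines with $q^{2N}-1$ to produce the characteristic ratio $(q_\varphi^n-1)/(q_\varphi^{2n}-1)$ in the answer.
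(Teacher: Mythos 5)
Your argument is correct. It differs from the paper's proof in packaging rather than in substance: the paper never computes $\langle p_n(\varphi),p_m(\varphi')\rangle$ on single power sums, but instead establishes the kernel identity $\sum_{\xi\in\mathbf{L}_N}\widetilde{p}_N(\xi)\otimes\overline{\widetilde{p}_N(\xi)}=(q^N-1)\sum_{x\in\mathbf{M}_N}\widetilde{p}_N(x)\otimes\widetilde{p}_N(x)$ by character orthogonality, rewrites both sides as sums over $F$-orbits, multiplies by $\tfrac{q^{2N}-1}{N(q^N-1)}$, sums over $N$ and exponentiates to get a Cauchy-type identity, from which the norms of the $p_\lambda$ are read off against the known norms of the $p_\mu$. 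Your route isolates the degree-one pairing directly, for which you need the orbit-sum identity $\sum_{x\in f}\xi(x)=\tfrac{d(f)}{d(\varphi)}\chi_\varphi(x_0)$ (correctly justified via the joint periodicity of $j\mapsto(F^j\xi)(x_0)$ in $d(f)$ and $d(\varphi)$, hence in their gcd), and then you invoke multiplicativity of a Hopf pairing on products of orthogonal primitives; the paper's exponentiation step encodes exactly that multiplicativity, and both proofs rest on the same orthogonality of $\mathbf{L}_N$ against $\mathbf{M}_N$ together with the collapse $q_f^{2N/d(f)}=q^{2N}$. Your version buys the explicit off-diagonal statement $\langle p_n(\varphi),p_m(\varphi')\rangle=\delta_{(n,\varphi),(m,\varphi')}\,n(q_\varphi^n-1)/(q_\varphi^{2n}-1)$, at the cost of the extra orbit bookkeeping on the $\mathbf{M}$ side that you flag; the paper avoids that bookkeeping because its orbit decomposition is only ever performed diagonally inside the tensor-square kernel. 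All the constants check out ($\langle p_k(f),p_k(f)\rangle=k/(q_f^{2k}-1)$ feeding into $z_\lambda\prod\gamma_{\lambda(\varphi)_i}/\lambda(\varphi)_i$ recovers the stated formula), so I see no gap.
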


\begin{proof}
Let $\overline{p_\lambda(\varphi)}$ denote the symmetric function obtained by expanding $p_\lambda(\varphi)$ in terms of $p_\lambda(f)$ and taking the complex conjugate of all coefficients, extended multiplicatively. First note that
\begin{equation*}
\begin{split}
\sum _{\varphi\in \mathbf{L}_n}\widetilde{p}_n(\varphi)\otimes \overline{\widetilde{p}_n(\varphi)}&=\sum _{\varphi\in \mathbf{L}_n}\sum _{x,y\in \mathbf{M}_n}\varphi(x)\overline{\varphi}(y)\widetilde{p}_n(x)\otimes \widetilde{p}_n(y)
\\&=(q^n-1)\sum _{x\in \mathbf{M}_n}\widetilde{p}_n(x)\otimes \widetilde{p}_n(y).
\end{split}
\end{equation*}
Then
\begin{equation*}
\begin{split}
\sum _{\varphi\in \mathbf{L}_n}\widetilde{p}_n(\varphi)\otimes \overline{\widetilde{p}_n(\varphi)}&=\sum _{\varphi\in O(\mathbf{L}),d(\varphi)|n}d(\varphi)p_{n/d(\varphi)}(\varphi)\otimes \overline{p_{n/d(\varphi)}(\varphi)}
\end{split}
\end{equation*}
and
\begin{equation*}
\sum _{x\in \mathbf{M}_n}\widetilde{p}_n(x)\otimes \widetilde{p}_n(x)=\sum _{f\in O(\mathbf{L}),d(f)|n}d(f_x)p_{n/d(f_x)}(f_x)\otimes p_{n/d(f_x)}(f_x)
\end{equation*}
and so multiplying by $\frac{q^{2n}-1}{n(q^n-1)}$ and summing over all $n$ gives
\begin{equation*}
\sum _{n\geq 1}\frac{1}{n}\sum _{f\in O(\mathbf{M})}(q_f^{2n}-1) p_n(f)\otimes p_n(f)=\sum _{n\geq 1}\frac{1}{n}\sum _{\varphi\in O(\mathbf{L})}\frac{q_\varphi^{2n}-1}{q_\varphi^n-1} p_n(\varphi)\otimes \overline{p_n(\varphi)}.
\end{equation*}

Finally, exponentiate both sides to obtain
\begin{equation*}
\sum _{\lambda}\frac{1}{z_\lambda}\left(\prod_{\varphi\in O(\mathbf{L})}\prod_i \frac{q_\varphi^{2\lambda(\varphi)_i}-1}{q_\varphi^{\lambda(\varphi)_i}-1}\right)p_\lambda\otimes \overline{p_\lambda} =\sum _{\mu}\frac{1}{z_\mu}\left(\prod _{f\in O(\mathbf{M})}\prod _{i}(q_f^{2\mu(f)_i}-1)\right)p_\mu\otimes p_\mu
\end{equation*}
and this power series identity implies that 
\begin{equation*}
\langle p_\lambda,p_\lambda\rangle =z_\lambda\prod_{\varphi\in O(\mathbf{L})}\prod _i \frac{q_\varphi^{\lambda(\varphi)_i}-1}{q_\varphi^{2\lambda(\varphi)_i}-1}.
\end{equation*}
\end{proof}

Now the norms of spherical functions may be computed as follows. 
\begin{proof}[Alternative proof of Lemma \ref{lemma: isometry}]
Note that for any spherical function, it is always the case that
\begin{equation*}
\langle \phi_\lambda,\phi_\lambda\rangle =\frac{|\GLq{2n}|}{d_{\lambda\cup\lambda}}
\end{equation*}
(see e.g. \cite[VII, \S 1]{M95}). Now compute
\begin{equation*}
\begin{split}
\frac{|H^F|^2}{q^n}\langle \ch(\phi_{\lambda}),\ch(\phi_{\lambda})\rangle =\frac{|H^F|^2}{q^{n}\psi_n(q^2)^2}\prod _{\varphi\in O(\mathbf{L})}q_\varphi^{-2n(\lambda(\varphi)')}\langle J_{\lambda}(q,q^2),J_{\lambda}(q,q^2)\rangle.
\end{split}
\end{equation*}
Because $\langle J_\lambda(q,q^2),J_\lambda(q,q^2)\rangle =c_\lambda(q,q^2) c_{\lambda'}(q,q^2)=H_{\lambda\cup\lambda}(q)$, this is equal to
\begin{equation*}
|\GLq{2n}|\psi_{2n}(q)^{-1}\prod _{\varphi\in O(\mathbf{L})}q_\varphi^{-n((\lambda(\varphi)\cup\lambda(\varphi))')}H_{\lambda\cup\lambda}(q)
\end{equation*}
and finally the dimension formula
\begin{equation*}
    d_{\lambda\cup\lambda}=\psi_{2n}(q)\prod _{\varphi\in O(\mathbf{L})}q_\varphi^{n((\lambda(\varphi)\cup\lambda(\varphi))')}H_{\lambda\cup \lambda}^{-1}(q)
\end{equation*}
gives
\begin{equation*}
\frac{|H^F|^2}{q^n}\langle \ch(\phi_{\lambda}),\ch(\phi_{\lambda})\rangle =\frac{|\GLq{2n}|}{d_{\lambda\cup \lambda}}
\end{equation*}
as desired.
\end{proof}

\subsection{Induction}
Let $G=\GL_{2(n+m)}$ and let $H=\Sp_{2(n+m)}$. Let $L$ be an $\iota$-stable Levi subgroup with $L^F=\GLq{2n}\times \GLq{m}\times \GLq{m}$, such that $L^F\cap H^F=\Spq{2n}\times \GLq{m}$. Let $P$ be a rational $\iota$-stable parabolic with $L$ as its Levi factor. With the conventions taken, an explicit realization of these subgroups are given by
\begin{equation*}
    \setlength{\arraycolsep}{3pt}
    L=\left\{\left(\begin{array}{cccc}
         *&0&0&0\\
         0&*&*&0\\
         0&*&*&0\\
         0&0&0&*
    \end{array}\right)\right\}, \quad
    P=\left\{\left(\begin{array}{cccc}
         *&*&*&*\\
         0&*&*&*\\
         0&*&*&*\\
         0&0&0&*
    \end{array}\right)\right\}
\end{equation*}
where the sizes of the rows and columns are $m,n,n,m$ and it is easy to check that these are both stable under $\iota$. Then the function
\begin{equation*}
    \Ind_{L\subseteq P}^{G/H}:\mathbf{C}[H_L^F\backslash L^F/H_L^F]\to\mathbf{C}[H^F\backslash G^F/H^F]
\end{equation*}
may be viewed as taking $f$ a $\Spq{2n}$ bi-invariant function on $\GLq{2n}$ and $g$ a class function on $\GLq{m}$ (since there is an isomorphism $\GL_m\times \GL_m/\GL_m\cong \GL_m$) and producing a $\Spq{2(n+m)}$ bi-invariant function on $\GLq{2(n+m)}$, denoted by $f\ast g$. This can be done for any $n,m$ and so defines a graded bilinear map
\begin{equation*}
\begin{split}
    &\bigoplus_n \mathbf{C}[\Spq{2n}\backslash \GLq{2n}/\Spq{2n}]\times \bigoplus_n \mathbf{C}[\GLq{n}]^{\GLq{n}}
    \\&\qquad\qquad\to \bigoplus_n \mathbf{C}[\Spq{2n}\backslash \GLq{2n}/\Spq{2n}].
\end{split}
\end{equation*}

If $f_1,f_2$ are functions on $G_1$ and $G_2$ respectively, let $f_1\times f_2$ denote the function on $G_1\times G_2$ given by $(f_1\times f_2)(x_1,x_2)=f_1(x_1)f_2(x_2)$. Then
\begin{equation*}
\begin{split}
    f\ast g(x):&=\Ind_{L\subseteq P}^{G/H}(f\times g)
    \\&=\frac{1}{|\Sp_{2n}^F|^{2}|\GL_m^F|^{4}q^{m(m+1)+4nm}}\sum _{\substack{h,h'\in \Sp_{2(n+m)}^F\\hxh'\in P^F}}(f\times g)(\overline{hxh'}).
\end{split}
\end{equation*}

\begin{remark}
Zelevinsky showed that $R=\oplus \mathbf{C}[\GLq{n}]^{\GLq{n}}$ can be given the structure of a positive self-adjoint Hopf algebra \cite{Z81}. Then the operation just defined turns
\begin{equation*}
    \bigoplus_n \mathbf{C}[\Spq{2n}\backslash \GLq{2n}/\Spq{2n}]
\end{equation*}
into an $R$-module. Similarly, the restriction operation defined in Section \ref{sec: induction for functions} turns it into an $R$-comodule and the two operations are adjoint. This gives the structure of a positive self-adjoint module over $R$ as defined in \cite{vL91}.
\end{remark}

The key result of this section is the following compatibility with the characteristic map.
\begin{theorem}
\label{thm: multiplicativity}
Let $f$ be an $\Spq{2n}$ bi-invariant function on $\GLq{2n}$ and let $g$ be a $\GLq{m}$ bi-invariant function on $\GLq{m}\times \GLq{m}$ (or equivalently a class function on $\GLq{m}$). Then
\begin{equation*}
    \ch(f\ast g)=\ch(f)\omega\omega_{q^2,q}\ch_{\GL_m}(g).
\end{equation*}
\end{theorem}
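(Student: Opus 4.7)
The plan is to verify the identity on a basis. By bilinearity of the operation $\ast$ and of $\ch$ and $\ch_{\GL_m}$, it suffices to take $f=\zeta_{T_1}^{\GL_{2n}/\Sp_{2n}}(\cdot|\theta_1)$ a basic function (with $T_1$ a rational maximal torus of $\GL_n$) and $g=\zeta_{T_2}^{\GL_m}(\cdot|\theta_2)$ a Deligne--Lusztig character; let $\lambda_1,\lambda_2$ be the combinatorial data attached to $(T_1,\theta_1)$ and $(T_2,\theta_2)$. Under the identification $\GL_m\times\GL_m/\GL_m\cong \GL_m$ used before Proposition \ref{prop: ind for L_0}, the class function $g$ corresponds exactly to the $\GL_m\times\GL_m$-factor appearing in the definition of basic functions on $L/H_L$, so $f\times g$ is the basic function $\zeta_{T_1\times T_2}^{L/H_L}(\cdot|\theta_1\times\theta_2)$.

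Proposition \ref{prop:ind for functions} applied to this basic function gives
\begin{equation*}
f\ast g \;=\; \frac{|T_2^F|_{q\mapsto q^2}}{|T_2^F|}\,\zeta_{T_1\times T_2}^{G/H}(\cdot|\theta_1\times\theta_2),
\end{equation*}
and then Proposition \ref{prop: characteristic of DL character}, applied to the torus $T_1\times T_2$ of $\GL_{n+m}$ with combinatorial data $\lambda_1\cdot\lambda_2$, yields
\begin{equation*}
\ch(f\ast g)\;=\;\frac{|T_2^F|_{q\mapsto q^2}}{|T_2^F|}\,(-1)^{n+m-l(\lambda_1)-l(\lambda_2)}\,p_{\lambda_1}p_{\lambda_2}.
\end{equation*}

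On the right-hand side, Proposition \ref{prop: characteristic of DL character} and its $\GL_m$ analogue give $\ch(f)=(-1)^{n-l(\lambda_1)}p_{\lambda_1}$ and $\ch_{\GL_m}(g)=(-1)^{m-l(\lambda_2)}p_{\lambda_2}$. The key computational observation is that on power sum generators
\begin{equation*}
\omega\omega_{q^2,q}\,p_n(\varphi)\;=\;\frac{q_\varphi^{2n}-1}{q_\varphi^n-1}\,p_n(\varphi),
\end{equation*}
since the two $(-1)^{n-1}$ signs coming from $\omega$ and $\omega_{q^2,q}$ cancel. Writing $T_2^F\cong \prod_i \mathbf{M}_{k_i}$ with $\theta_2|_{\mathbf{M}_{k_i}}$ in the orbit $\varphi_i$, so that $p_{\lambda_2}=\prod_i p_{k_i/d(\varphi_i)}(\varphi_i)$ and $|T_2^F|=\prod_i(q^{k_i}-1)$, this identity multiplies out to $\omega\omega_{q^2,q}\,p_{\lambda_2}=\frac{|T_2^F|_{q\mapsto q^2}}{|T_2^F|}\,p_{\lambda_2}$, and combining with the signs for $\ch(f)$ and $\ch_{\GL_m}(g)$ recovers the expression obtained above for $\ch(f\ast g)$.

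The substantive step is Proposition \ref{prop:ind for functions}, which has already been proven; the main remaining subtlety is bookkeeping for the identifications, namely that $g$ viewed as a bi-invariant function on the $\GL_m\times\GL_m$-factor really is the Deligne--Lusztig character on $\GL_m$ used to build $\zeta^{L/H_L}_{T_1\times T_2}$, and that the combinatorial data for $T_1\times T_2$ is the concatenation $\lambda_1,\lambda_2$. Once these are in hand, the proof is a direct comparison of constants via the $\omega\omega_{q^2,q}$ identity above.
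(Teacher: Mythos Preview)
Your proof is correct and follows essentially the same approach as the paper: reduce by bilinearity to basic functions, apply Proposition~\ref{prop:ind for functions} to compute $f\ast g$, and then apply Proposition~\ref{prop: characteristic of DL character}. The paper's proof is slightly terser and leaves the verification that the factor $|T_2^F|_{q\mapsto q^2}/|T_2^F|$ matches $\omega\omega_{q^2,q}$ on $p_{\lambda_2}$ implicit, whereas you spell this out explicitly; otherwise the arguments are the same.
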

\begin{proof}
It suffices to show that if $T_1$ and $T_2$ are rational maximal tori in $\GL_{n}$ and $\GL_m$ respectively, and if $\theta_1$ and $\theta_2$ are characters of $T_1^F$ and $T_2^F$, then
\begin{equation*}
    \zeta_{T_1}^{\GL_{2n}/\Sp_{2n}}(\cdot|\theta_1)\ast \zeta_{T_2}^{\GL_{m}}(\cdot|\theta_2)=\frac{|T_2^F|_{q\mapsto q^2}}{|T_2^F|}\zeta^{G/H}_{T_1\times T_2}(\cdot|\theta_1\times \theta_2).
\end{equation*}
This is because applying the characteristic map implies
\begin{equation*}
    \ch\left(\zeta_{T_1}^{\GL_{2n}/\Sp_{2n}}(\cdot|\theta_1)\ast \frac{|T_2^F|_{q\mapsto q^2}}{|T_2^F|}\zeta_{T_2}^{\GL_{m}}(\cdot|\theta_2)\right)=(-1)^{n+m-l(\lambda)}\frac{|T_2^F|_{q\mapsto q^2}}{|T_2^F|}p_{\lambda}
\end{equation*}
which is exactly the desired result, and as the basic functions form a basis and the equation is linear in $f,g$, the result follows.

Now let $L\subseteq P$ be the Levi and parabolic defining the $\ast$ operation defined above. Then
\begin{equation*}
    \zeta_{T_1}^{\GL_{n}}(\cdot|\theta_1)\ast \zeta_{T_2}^{\GL_{m}}(\cdot|\theta_2)=\Ind_{L\subseteq P}^{G/H}(\zeta_{T_1\times T_2}^{L/H_L}(\cdot|\theta_1\times \theta_2))
\end{equation*}
and the desired equality follows directly from Proposition \ref{prop:ind for functions}.
\end{proof}

\begin{remark}
The above induction operation can be defined for any Levi subgroup with a rational $\iota$-stable parabolic. However, as every such Levi subgroup is of the form $\GL_{2n}\times \prod \GL_{n_i}\times \GL_{n_i}$, there is no real gain to considering more general subgroups because the induction can be broken into two steps, first all the $\GL_{n_i}\times \GL_{n_i}/\GL_{n_i}\cong \GL_{n_i}$ factors, and then the remaining $\GL_{2n_0}/\Sp_{2n_0}$ factor, and the first step simply corresponds to the classical theory for $\GL_n$.

It would be very interesting if an analogous operation could be defined for the Levi subgroups $\GL_{2n}\times \GL_{2m}$ with $\iota$-fixed points $\Sp_{2n}\times \Sp_{2m}$. Unfortunately, it is not hard to see that this cannot be given by the same formula (and so does not correspond to the bi-invariant parabolic induction defined by Grojnowski \cite{G92}). For example, the parabolic induction of the indicator function for a double coset generated by a unipotent element may be non-zero on double cosets generated by a non-unipotent element. 

Such an operation would define a multiplication operation on 
\begin{equation*}
    \bigoplus_n \mathbf{C}[\Spq{2n}\backslash \GLq{2n}/\Spq{2n}]
\end{equation*}
analogous to the ones defined for the more classical examples of a characteristic map. A nice formula for this operation would have consequences for the structure constants of the Macdonald polynomials (the Macdonald analogue of the Littlewood-Richardson coefficients) similar to the applications in \cite{M95} in the $S_{2n}/B_n$ case (or Theorems \ref{thm: positivity} and \ref{thm: vanishing}) on positivity and vanishing.
\end{remark}

\section{Schur expansion of skew Macdonald polynomials}
\label{sec: LR coeff}
In this section, Theorem \ref{thm: positivity} on positivity of the coefficients of the Schur expansion of skew Macdonald polynomials with parameters $(q,q^2)$ is proven. The argument for Theorem \ref{thm: positivity} is in the same spirit as that of Macdonald \cite[VII, \S 2]{M95} for a similar result in the Jack case with parameter $2$. A condition for the vanishing of these coefficients is also given and in the special case of a non-skew Macdonald polynomial, relates to the classical Littlewood-Richardson coefficients.

The strategy will be to use the characteristic map and the mixed product to reinterpret $C_{\lambda/\mu}^\nu(q,q^2)$ in terms of bi-invariant functions on $\GLq{2n}$ and then utilize general facts about Gelfand pairs and the formula for parabolic induction to establish the results.

\subsection{Skew Macdonald polynomials}
If $\lambda/\mu$ is a skew shape, the skew Macdonald polynomial $P_{\lambda/\mu}(x;q,t)$ is defined by requiring
\begin{equation*}
    \langle P_{\lambda/\mu}(q,t),f\rangle=\langle P_\lambda(q,t),Q_\mu(q,t) f\rangle
\end{equation*}
for all symmetric functions $f$. Define the coefficients $C_{\lambda/\mu}^\nu(q,t)$ by
\begin{equation*}
    P_{\lambda/\mu}(x;q,t)=\sum _{\nu}C_{\lambda/\mu}^\nu(q,t)s_\nu(x).
\end{equation*}
These are the coefficients of the Schur expansion of $P_{\lambda/\mu}$.

\begin{remark}
Since 
\begin{equation*}
    P_{\lambda/\mu}(q,t)=\left(\frac{q}{t}\right)^{|\lambda|-|\mu|}P_{\lambda/\mu}(q^{-1},t^{-1}),
\end{equation*}
Theorem \ref{thm: positivity} immediately extends to parameters $(q^{-1},q^{-2})$ for $q$ an odd prime power.
\end{remark}

\subsection{Positive-definite functions}
For any finite group $G$, a \emph{positive-definite} function on $G$ is a function $f:G\to \mathbf{C}$ such that the matrix indexed by $G$ whose $x,y$ entry is $f(x^{-1}y)$ is a positive-definite matrix. A key fact is that any bi-invariant positive-definite function is a positive linear combination of spherical functions.

\begin{proposition}[{\cite[VII, \S 1]{M95}}]
\label{prop: spherical function positivity}
Let $G/H$ be a Gelfand pair. Any spherical function is positive-definite, and moreover if $f$ is an $H$ bi-invariant function on $G$ that is positive-definite, then for any spherical function $\phi$ on $G$, $\langle f,\phi\rangle\geq 0$.
\end{proposition}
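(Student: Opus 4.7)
The plan is to prove both claims by realizing the spherical function $\phi$ as a matrix coefficient of its corresponding unitary irreducible spherical representation $\rho$: write $\phi(g) = \langle v, \rho(g) v\rangle$ for $v$ a unit $H$-fixed vector. For the first claim, positive-definiteness of $\phi$ is then immediate from unitarity of $\rho$, since for any coefficients $c_x \in \mathbf{C}$,
\begin{equation*}
    \sum_{x, y \in G} \overline{c_x} c_y \phi(x^{-1}y) = \sum_{x, y} \overline{c_x} c_y \langle \rho(x) v, \rho(y) v\rangle = \left\| \sum_{y \in G} c_y \rho(y) v \right\|^2 \geq 0,
\end{equation*}
so the matrix $[\phi(x^{-1}y)]_{x,y}$ is positive semi-definite.

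For the second claim, I would feed the specific vector of coefficients $c_x = \overline{\phi(x)}$ into the positive-definiteness hypothesis on $f$, obtaining
\begin{equation*}
    \sum_{x, y \in G} \phi(x) \overline{\phi(y)} f(x^{-1}y) \geq 0.
\end{equation*}
Substituting $g = x^{-1}y$ regroups this as $\sum_{g} f(g) \bigl(\sum_{x} \phi(x) \overline{\phi(xg)}\bigr) \geq 0$. It then remains to evaluate the inner sum. Writing $\phi(xg) = \langle v, \rho(x) \rho(g) v\rangle$ and expanding in an orthonormal basis beginning with $v$, Schur orthogonality for matrix coefficients of the irreducible $\rho$ collapses the inner sum to $(|G|/\dim \rho)\, \overline{\phi(g)}$. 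Substituting back yields precisely $\langle f, \phi\rangle \geq 0$.

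The only real calculation is the Schur-orthogonality evaluation of $\sum_x \phi(x) \overline{\phi(xg)}$, which is standard once the matrix-coefficient formalism is in place. Bi-invariance of $f$ does not enter the inequality directly, although it is built into the framing of the proposition since the spherical functions span the bi-invariant functions; no substantial obstacle is anticipated.
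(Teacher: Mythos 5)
Your proof is correct: the matrix-coefficient identity $\phi(x^{-1}y)=\langle\rho(x)v,\rho(y)v\rangle$ gives positive semi-definiteness immediately, and the Schur-orthogonality evaluation $\sum_x\phi(x)\overline{\phi(xg)}=(|G|/\dim\rho)\,\overline{\phi(g)}$ turns the positive-definiteness of $f$ tested against $c_x=\overline{\phi(x)}$ into $\langle f,\phi\rangle\geq 0$. The paper gives no proof of its own, deferring to Macdonald [VII, \S 1], and the argument there is essentially the same unitary-representation computation you describe, so there is nothing further to compare.
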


If $\alpha:G\to H$ is a group homomorphism, then given functions $f:G\to \mathbf{C}$ and $g:H\to \mathbf{C}$, define the functions $\alpha^*g:G\to \mathbf{C}$, or the pullback, and $\alpha_*f:H\to \mathbf{C}$, or the pushforward, by
\begin{align*}
    \alpha^*g(x)&=g(\alpha(x))\\
    \alpha_*f(x)&=\sum _{\alpha(y)=x}f(y).
\end{align*}
From the definition, it's clear that $(\alpha\circ \beta)^*=\beta^*\circ \alpha^*$ and $(\alpha\circ\beta)_*=\alpha_*\circ\beta_*$. There are two basic properties that are needed, namely that pullback and pushforward are adjoint and that pushforward and pullback preserve positive-definite functions.
\begin{lemma}
\label{lemma: push-pull adjoint}
If $\alpha:G\to H$, and $f:H\to \mathbf{C}$ and $g:G\to \mathbf{C}$, then $\langle \alpha^*f,g\rangle=\langle f,\alpha_*g\rangle$.
\end{lemma}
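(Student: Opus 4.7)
The plan is to verify the identity by a direct unfolding of both sides and a reindexing of one of the two sums by the fibers of $\alpha$. This is purely formal: no properties of $\alpha$ beyond being a function of sets are needed, and the group structure plays no role in the proof itself.

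First I would expand the left-hand side using the definition of the inner product on $G$ and of the pullback:
\begin{equation*}
    \langle \alpha^* f, g\rangle = \sum_{x \in G} f(\alpha(x)) \overline{g(x)}.
\end{equation*}
Next I would partition the sum over $G$ according to the value $y = \alpha(x) \in H$, writing
\begin{equation*}
    \sum_{x \in G} f(\alpha(x)) \overline{g(x)} = \sum_{y \in H} f(y) \sum_{\substack{x \in G \\ \alpha(x) = y}} \overline{g(x)},
\end{equation*}
where for $y \notin \alpha(G)$ the inner sum is empty. Recognizing the inner sum as $\overline{\alpha_* g(y)}$ (using that complex conjugation commutes with finite sums and that $\alpha_* g$ is $\mathbf{C}$-linear in $g$) yields
\begin{equation*}
    \sum_{y \in H} f(y) \overline{\alpha_* g(y)} = \langle f, \alpha_* g\rangle,
\end{equation*}
which is the right-hand side.

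There is no real obstacle here; the only thing to be careful about is the placement of the complex conjugate and the convention that empty inner sums vanish. The lemma will then be used immediately afterwards, presumably together with the analogous preservation-of-positive-definiteness statement, to pull spherical functions back and push indicator functions forward along convenient group homomorphisms in the proof of Theorem \ref{thm: positivity}.
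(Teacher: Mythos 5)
Your proof is correct and is essentially identical to the paper's: both sides are expanded by definition and the sum over $G$ is regrouped by the fibers of $\alpha$. You are in fact slightly more careful than the paper, which drops the complex conjugate in its displayed formulas even though the inner product as defined carries one.
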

\begin{proof}
Note that
\begin{equation*}
    \langle \alpha^*f,g\rangle=\sum _{x\in G}f(\alpha(x))g(x)
\end{equation*}
and
\begin{equation*}
    \langle f,\alpha_*g\rangle=\sum _{y\in H}f(y)\sum _{\alpha(x)=y}g(x)
\end{equation*}
which are equal.
\end{proof}
\begin{lemma}
\label{lem: pos-def push-pull}
If $f:G\to \mathbf{C}$ is positive-definite, and $\alpha:G\to H$ and $\beta: H\to G$ are group homomorphisms, then $\alpha_*f$ and $\beta^*f$ are also positive-definite.
\end{lemma}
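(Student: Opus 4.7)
The plan is to verify the definition of positive-definiteness directly in each case, reducing it to the corresponding condition for $f$ on $G$. Recall that $f : G \to \mathbf{C}$ being positive-definite means $\sum_{x,y \in G} c_x \overline{c_y} f(y^{-1}x) \ge 0$ for every choice of complex coefficients $\{c_x\}$.

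For the pullback $\beta^*f$ the argument is essentially immediate. Since $\beta$ is a homomorphism, $(\beta^*f)(y^{-1}x) = f(\beta(y)^{-1}\beta(x))$, so grouping input coefficients by fiber via $d_g := \sum_{x \in H,\ \beta(x) = g} c_x$ yields
\begin{equation*}
\sum_{x,y \in H} c_x \overline{c_y} (\beta^*f)(y^{-1}x) = \sum_{g,h \in G} d_g \overline{d_h} f(h^{-1}g) \ge 0,
\end{equation*}
where the inequality is the positive-definiteness of $f$.

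For the pushforward $\alpha_*f$ a short fiber-counting step is needed. The plan is to first establish the identity
\begin{equation*}
\alpha_*f(h'^{-1}h) = \frac{1}{|\ker \alpha|} \sum_{\substack{x,y \in G \\ \alpha(x) = h,\ \alpha(y) = h'}} f(y^{-1}x),
\end{equation*}
which follows from the observation that for each $z \in \alpha^{-1}(h'^{-1}h)$, the map $y \mapsto (yz,y)$ is a bijection from the fiber $\alpha^{-1}(h')$ onto the set of pairs $(x,y)$ satisfying $\alpha(x) = h$, $\alpha(y) = h'$, and $y^{-1}x = z$; hence each $z$ in the fiber is counted exactly $|\ker \alpha|$ times. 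Substituting into the positive-definiteness sum for $\alpha_*f$ and swapping the order of summation then collapses the double sum over $H$ and the fiber into a single sum over $G \times G$:
\begin{equation*}
\sum_{h,h' \in H} c_h \overline{c_{h'}} \alpha_*f(h'^{-1}h) = \frac{1}{|\ker \alpha|} \sum_{x,y \in G} c_{\alpha(x)} \overline{c_{\alpha(y)}} f(y^{-1}x),
\end{equation*}
and positive-definiteness of $f$ applied to the coefficients $\tilde c_x := c_{\alpha(x)}$ finishes the proof.

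The only mildly subtle step is the fiber-counting identity for $\alpha_*f$; after that, both statements follow by symbolic manipulation, and neither representation theory nor the Gelfand pair structure plays any role.
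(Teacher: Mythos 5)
Your argument for the pullback $\beta^*f$ is correct and is essentially the paper's: grouping by fibers of $\beta$ turns the coefficients $\{c_x\}$ on $H$ into the pushed-forward coefficients $\{d_g\}$ on $G$.

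The pushforward half has a genuine gap: your fiber-counting identity
\begin{equation*}
\alpha_*f(h'^{-1}h)=\frac{1}{|\ker\alpha|}\sum_{\substack{x,y\in G\\ \alpha(x)=h,\ \alpha(y)=h'}}f(y^{-1}x)
\end{equation*}
is only valid when $h$ and $h'$ both lie in the image of $\alpha$. Your bijection correctly shows that the right-hand sum equals $|\alpha^{-1}(h')|\cdot\alpha_*f(h'^{-1}h)$, but $|\alpha^{-1}(h')|=|\ker\alpha|$ only for $h'\in\alpha(G)$; if $h'\notin\alpha(G)$ the right side is an empty sum, while the left side can be nonzero whenever $h'^{-1}h\in\alpha(G)$ (e.g.\ $h=h'\notin\alpha(G)$ gives $\alpha_*f(e)=\sum_{z\in\ker\alpha}f(z)$). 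Concretely, with $G$ trivial and $H$ nontrivial your final display reads $f(e)\sum_{h}|c_h|^2$ on the left but only $|c_e|^2f(e)$ on the right. So as written your argument proves the statement only for surjective $\alpha$. The missing ingredient is exactly how the paper completes the proof: factor $\alpha$ as a surjection followed by an injection. For the injective piece, observe that $\alpha_*f$ is supported on $\alpha(G)$, so the quadratic form splits as a sum over left cosets $z\alpha(G)$ of $H$, and each coset contributes a positive-definiteness sum for $f$ with coefficients $c_{z\alpha(\cdot)}$, hence is non-negative. Your surjective computation then handles the other factor, and the two pieces together give the lemma. (Alternatively, one can keep your single-step approach but insert the coset decomposition first and apply the fiber count within each coset, where both arguments of $\alpha_*f$ do lie in the relevant image.)
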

\begin{proof}
Note positive definiteness is equivalent to having
\begin{equation*}
    \sum _{x,y\in G}f(x^{-1}y)\overline{h(x)}h(y)\geq 0
\end{equation*}
for all functions $h:G\to \mathbf{C}$. Then
\begin{equation*}
\begin{split}
    \sum_{x,y\in H} \beta^*f(x^{-1}y)\overline{h(x)}h(y)&=\sum_{x,y\in G}f(x^{-1}y)\overline{\beta_*h(x)}\beta_*h(y)
    \geq 0
\end{split}
\end{equation*}
so $\beta^*f$ is positive-definite.

For $\alpha_*f$, note that $\alpha$ can always be factored as a surjection and an injection. If $\alpha$ is surjective, then
\begin{equation*}
\begin{split}
    \sum_{x,y\in H} \alpha_*f(x^{-1}y)\overline{h(x)}h(y)&=|\ker \alpha|^{-1}\sum_{x,y\in G} f(x^{-1}y)\overline{\alpha^*h(x)}\alpha^*h(y)\geq 0
\end{split}
\end{equation*}
and if $\alpha$ is injective, then
\begin{equation*}
\begin{split}
    \sum _{x,y\in H}\alpha_*f(x^{-1}y)\overline{h(x)}h(y)&=\sum _{x,y\in H, x^{-1}y\in \alpha(H)}f(x^{-1}y)\overline{h(x)}h(y)
    \\&=\sum _{z\in G/\alpha(H)}\sum _{x,y\in H}f(x^{-1}y)\overline{h(zx)}h(zy)
\end{split}
\end{equation*}
is non-negative as each summand is non-negative.
\end{proof}

Next, it is shown that the parabolic restriction of bi-invariant functions takes positive-definite functions to positive-definite functions.
\begin{lemma}
Let $f$ be a positive-definite $\Spq{2(n+m)}$ bi-invariant function on $\GLq{2(n+m)}$, and let $L\subseteq P$ be any rational $\iota$-stable parabolic subgroup and its Levi factor. Then $\Res_{L\subseteq P}^{\GL_{2(n+m)}}(f)$ is a positive-definite function on $L^F$.
\end{lemma}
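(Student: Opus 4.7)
The plan is to exhibit the parabolic restriction operator as a composition of a pullback and a pushforward along group homomorphisms, after which the result becomes an immediate consequence of Lemma \ref{lem: pos-def push-pull}. The key structural observation is that the canonical projection $P\to L$ is a genuine group homomorphism, because the unipotent radical $U$ is a normal subgroup of $P$ and $L\cong P/U$; accordingly, the induced map $P^F\to L^F$ is also a group homomorphism.

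Concretely, let $i\colon P^F\hookrightarrow G^F$ denote the inclusion and $\pi\colon P^F\to L^F$ denote the projection $p\mapsto \overline{p}$. Reading off the definition of parabolic restriction, one has
\begin{equation*}
    \Res^{G/H}_{L\subseteq P}(f)(x)=\sum_{\substack{p\in P^F\\ \overline{p}=x}}f(p)=\sum_{\pi(p)=x}(i^*f)(p)=\pi_*(i^*f)(x),
\end{equation*}
so $\Res^{G/H}_{L\subseteq P}(f)=\pi_*\bigl(i^*f\bigr)$. Since $i$ and $\pi$ are both group homomorphisms, Lemma \ref{lem: pos-def push-pull} applied to the pullback shows $i^*f$ is positive-definite on $P^F$, and a second application to the pushforward shows $\pi_*(i^*f)$ is positive-definite on $L^F$.

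There is essentially no obstacle once the factorization is spotted; the entire content of the argument is the observation that parabolic restriction of bi-invariant functions is literally a pushforward along $\pi$ of a pullback along $i$, together with the fact that $\pi$ is a homomorphism. The bi-invariance of $\Res^{G/H}_{L\subseteq P}(f)$ under $H_L^F$ is automatic from the definition and need not be re-verified here.
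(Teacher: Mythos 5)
Your proof is correct and is essentially identical to the paper's: both factor $\Res^{G/H}_{L\subseteq P}$ as the pushforward along the projection $P^F\to L^F$ of the pullback along the inclusion $P^F\hookrightarrow G^F$, and then apply Lemma \ref{lem: pos-def push-pull} twice. Nothing further is needed.
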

\begin{proof}
Let $i:P^F\to \GL_{2(n+m)}^F$ denote the inclusion map and $pr:P^F\to L^F$ denote the canonical projection. Then
\begin{equation*}
    Res^G_{L\subseteq P}(f)= pr_*i^*(f),
\end{equation*}
and so if $f$ is a positive-definite function, then by Lemma \ref{lem: pos-def push-pull} so is $ Res^G_{L\subseteq P}(f)$.
\end{proof}
\begin{corollary}
If $f$ is positive-definite bi-invariant on $L^F$, then $\Ind_{L\subseteq P}^G(f)$ is a positive-definite function.
\end{corollary}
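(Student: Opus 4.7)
The plan is to reduce positive-definiteness of $\Ind_{L\subseteq P}^{G/H}(f)$ to non-negativity of its pairings against spherical functions on $G^F/H^F$, and then to transfer those pairings to $L^F$ via the adjointness between induction and restriction, where the preceding lemma can be applied.

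First I would use the fact that $G^F/H^F$ is a Gelfand pair whose spherical functions $\{\phi_\lambda\}$ form an orthogonal basis of bi-invariant functions with positive norms. Combined with Proposition \ref{prop: spherical function positivity}, this means a bi-invariant function $g$ is positive-definite if and only if $\langle g,\phi_\lambda\rangle \ge 0$ for every $\lambda$: one direction is the proposition, and for the converse, $g = \sum_\lambda c_\lambda \phi_\lambda$ with $c_\lambda = \langle g,\phi_\lambda\rangle/\|\phi_\lambda\|^2 \ge 0$ is a non-negative combination of positive-definite spherical functions. Since $\Ind_{L\subseteq P}^{G/H}(f)$ is bi-invariant by construction, it therefore suffices to prove $\langle \Ind_{L\subseteq P}^{G/H}(f),\phi_\lambda\rangle \ge 0$ for every $\lambda$.

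Next I would apply Proposition \ref{prop: Ind Res adjoint} to rewrite this pairing as a positive constant times $\langle f, \Res_{L\subseteq P}^{G/H}\phi_\lambda\rangle$. The preceding lemma guarantees that $\Res_{L\subseteq P}^{G/H}\phi_\lambda$ is positive-definite and $H_L^F$-bi-invariant on $L^F$, and $f$ is positive-definite bi-invariant by hypothesis. The same cone-decomposition argument as in the first step, now applied to $L/H_L$ in place of $G/H$, then shows that the inner product of two positive-definite bi-invariant functions on $L^F$ is non-negative.

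The step that will take real work is verifying the Gelfand-pair framework for $L/H_L$: in the setting of this section $L^F/H_L^F$ is $\GLq{2n}/\Spq{2n}$ times the diagonal pair $(\GLq{m}\times \GLq{m})/\Delta\GLq{m}$, whose spherical-type functions are outer products of spherical functions on $\GLq{2n}/\Spq{2n}$ with irreducible characters of $\GLq{m}$ (viewed as bi-invariant on $\GL_m\times \GL_m$ via the identification $G\to G^{-\iota}$ used in the construction of $f\ast g$). The multiplicity-freeness of each factor is standard --- the first is from \cite{BKS90} and the second is the classical decomposition $\Ind^{\GL_m\times \GL_m}_{\Delta\GL_m}(1)\cong \bigoplus_\rho \rho\otimes \rho^\ast$ --- and a product of multiplicity-free representations of distinct groups is again multiplicity-free, so Proposition \ref{prop: spherical function positivity} applies componentwise on $L^F$ and supplies the final non-negativity.
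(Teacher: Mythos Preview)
Your proof is correct and follows essentially the same approach as the paper: use adjointness (Proposition~\ref{prop: Ind Res adjoint}) to reduce $\langle \Ind_{L\subseteq P}^{G/H}(f),\phi_\lambda\rangle\ge 0$ to $\langle f,\Res_{L\subseteq P}^{G/H}\phi_\lambda\rangle\ge 0$, invoke the preceding lemma for positive-definiteness of the restriction, and then expand both arguments in spherical functions on $L^F/H_L^F$. The paper compresses this last step into the single phrase ``by expanding both arguments in terms of spherical functions,'' whereas you spell out the Gelfand-pair structure of $L^F/H_L^F$ explicitly; that extra care is warranted but does not constitute a different argument.
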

\begin{proof}
Since parabolic induction and restriction are adjoints, if $f$ is a positive-definite bi-invariant function on $L^F$, then for any spherical function $\phi$ on $G^F$
\begin{equation*}
    \langle \Ind _{L\subseteq P}^G(f),\phi\rangle=|\Sp_{2(n+m)}^F|^2|\Sp_{2n}^F|^{-2}|\Sp_{2m}^F|^{-2}\langle f,\Res_{L\subseteq P}^G \phi\rangle
\end{equation*}
is non-negative by expanding both arguments in terms of spherical functions. Thus, $\Ind_{L\subseteq P}^G(f)$ is positive-definite.
\end{proof}

\subsection{Proof of Theorems \ref{thm: positivity}}

\begin{proof}[Proof of Theorem \ref{thm: positivity}]
Fix $\mu,\nu,\lambda$ and let $|\mu|=m$ and $|\nu|=n$. Assume that $|\lambda|=m+n$ as otherwise $C_{\lambda/\mu}^\nu(q,t)=0$. When necessary, view these partitions as partition-valued functions $O(\mathbf{L})\to \mathcal{P}$ by taking $\lambda(\varphi)=\lambda$ if $\varphi$ is the trivial character and $0$ otherwise.

Note that
\begin{equation*}
    C_{\lambda/\mu}^\nu(q,q^2)=\langle P_{\lambda/\mu}, s_\nu\rangle= \frac{\langle J_\lambda,J_\mu \omega\omega_{q^2,q}s_\nu\rangle_{q,q^2}}{c_\lambda(q,q^2)c'_\mu(q,q^2)}
\end{equation*}
is a positive scalar multiple of $\langle \phi_\lambda,\phi_\mu\ast \chi_\nu\rangle$ by Proposition \ref{prop: ch of spherical func} and Theorem \ref{thm: multiplicativity}.

Now by Proposition \ref{prop: spherical function positivity}, it is enough to show that $\phi_\mu\ast \chi_\nu=\Ind^G_{L\subseteq P}(\phi_\mu\times \chi_\nu)$ is a positive-definite bi-invariant function on $\GLq{2(m+n)}$. But since $\phi_\mu\times \phi_\nu$ is positive-definite, then by the corollary so is $\Ind^G_{L\subseteq P}(\phi_\mu\times \chi_\nu)$.
\end{proof}

\begin{remark}
This proof gives a representation-theoretic interpretation of the coefficients in the Schur expansion of skew Macdonald polynomials (with $q$ an odd prime power) in terms of coefficients of spherical functions in the bi-invariant parabolic induction, similar to how the classical Littlewood-Richardson coefficients can be viewed as the multiplicities of irreducible representations in the Young induction.
\end{remark}
\begin{remark}
Let $J_\mu^\perp(q,q^2)$ denote the adjoint of multiplication by $J_\mu(q,q^2)$ with respect to the $\langle,\rangle_{q,q^2}$ inner product. Some computations for small partitions in Sage suggest that 
\begin{equation*}
    \frac{\langle J_\mu^\perp(q,q^2)J_\lambda(q,q^2),s_\nu\rangle}{(1-q)^{|\lambda|+|\mu|}}\in\mathbf{N}[q]
\end{equation*}
extending the conjecture of Haglund. It is unclear if this is the correct normalization although replacing $J_\mu^\perp(q,q^2)$ by the normalized version (that takes $J_\mu(q,q^2)$ to $1$) leads to rational and not polynomial expressions. If this conjecture holds, it would be interesting to see what combinatorial interpretation the coefficients might have.
\end{remark}

\subsection{A vanishing theorem}
The characteristic map also gives a condition for vanishing of $C_{\lambda/\mu}^\nu(q,q^2)$. In the special case that $\mu=0$, the theorem states that $s_\nu$ appears in $J_{\lambda}(q,q^2)$ only if $s_{\lambda\cup\lambda}$ appears in the expansion of $s_\nu s_\nu$.

\begin{theorem}
\label{thm: vanishing}
If $\langle s_{\lambda\cup\lambda},s_{\mu\cup\mu}s_\nu s_\nu\rangle=0$, then $C_{\lambda/\mu}^\nu(q,q^2)$ vanishes as a function of $q$.
\end{theorem}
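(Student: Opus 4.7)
The plan is to argue in close parallel to the proof of Theorem~\ref{thm: positivity}. By the same reduction, $C_{\lambda/\mu}^\nu(q,q^2)$ is a positive scalar multiple of $\langle \phi_\lambda, \phi_\mu \ast \chi_\nu\rangle$, and by Frobenius reciprocity (Proposition~\ref{prop: Ind Res adjoint}) this equals, up to a positive constant, $\langle \Res^{G/H}_{L\subseteq P}\phi_\lambda, \phi_\mu\times\chi_\nu\rangle_{L^F}$, where $L^F=\GLq{2m}\times\GLq{n}\times\GLq{n}$ (with $m=|\mu|$, $n=|\nu|$) is the Levi used to define the mixed product. The aim is to recast this pairing as a trace of projection operators on the representation space $V_{\lambda\cup\lambda}$ of $\GLq{2(m+n)}$ and to exhibit vanishing of one of these projections under the hypothesis.

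For the computation, write $\phi_\lambda=|H^F|^{-1}\sum_{h\in H^F}\chi_{\lambda\cup\lambda}(h\cdot)$ and let $\rho$ denote the irreducible representation with character $\chi_{\lambda\cup\lambda}$. Using the standard identity $\sum_{u\in U^F}\rho(u)=|U^F|P_U$, with $P_U$ the orthogonal projection onto $V_{\lambda\cup\lambda}^{U^F}$, a direct calculation gives
\[
\Res^{G/H}_{L\subseteq P}\phi_\lambda(l)=|U^F|\,\Tr(P_H\,\rho(l)\,P_U),
\]
where $P_H$ is the projection onto $V_{\lambda\cup\lambda}^{H^F}$. The bi-invariant function $\phi_\mu\times\chi_\nu$ is, up to a positive scalar, the $L^F/H_L^F$-spherical function associated with the $L$-representation $\Xi=V_{\mu\cup\mu}\otimes V_\nu\otimes V_{\nu^*}$, and so equals a positive multiple of $|H_L^F|^{-1}\sum_{h\in H_L^F}\chi_\Xi(h\cdot)$. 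Substituting this and applying Schur orthogonality on $L^F$, together with the identity $P_H P_{H_L}=P_H$ (which follows from $H_L\subseteq H$, giving $V^{H^F}\subseteq V^{H_L^F}$), yields
\[
\langle \Res^{G/H}_{L\subseteq P}\phi_\lambda,\phi_\mu\times\chi_\nu\rangle_{L^F}=c\,\Tr(P_H\,P_\Xi\,P_U)
\]
for an explicit positive constant $c$, where $P_\Xi$ is the projection onto the $\Xi$-isotypic component of $V_{\lambda\cup\lambda}|_{L^F}$.

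To conclude, the multiplicity of $\Xi$ in $V_{\lambda\cup\lambda}|_{L^F}$ equals, by classical Frobenius reciprocity and the characteristic map $\ch_{\GL}$, the coefficient of $s_{\lambda\cup\lambda}$ in $s_{\mu\cup\mu}\cdot s_\nu\cdot s_{\nu^*}$. Since $\nu$ is concentrated at the trivial character (which is self-inverse in $O(\mathbf{L})$), one has $\nu^*=\nu$ as partition-valued functions and hence $s_{\nu^*}=s_\nu$; the multiplicity therefore equals $\langle s_{\lambda\cup\lambda},s_{\mu\cup\mu}s_\nu s_\nu\rangle$. Under the hypothesis this vanishes, so $P_\Xi=0$, hence $\Tr(P_H P_\Xi P_U)=0$, and tracing back through the reductions gives $C_{\lambda/\mu}^\nu(q,q^2)=0$. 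The main technical obstacle will be carefully verifying the identification of $\phi_\mu\times\chi_\nu$ with the $\Xi$-spherical function up to a positive scalar (requiring the correspondence $l\mapsto l\iota(l)^{-1}$ between $\GLq{n}$-bi-invariant functions on $\GLq{n}\times\GLq{n}$ and class functions on $\GLq{n}$) and tracking normalization constants through the orthogonality calculations.
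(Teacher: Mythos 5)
Your overall strategy is essentially the paper's: reduce to $\langle\phi_\lambda,\phi_\mu\ast\chi_\nu\rangle$, move the induction across the inner product via adjointness, and recognize the resulting quantity as (a positive multiple of) a trace of projections on $V_{\lambda\cup\lambda}$ whose vanishing is controlled by a multiplicity computable through $\ch_{\GL}$. The paper phrases this with convolution --- its operator $\sum_{y\in P^F}\rho_{\lambda\cup\lambda}(y^{-1})\,pr^*(\chi_{\mu\cup\mu}\times\chi_\nu\times\chi_\nu)(y)$ is, up to a scalar, your $P_U P_\Xi$ --- so the two arguments agree in substance.

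There is, however, one genuine error in your final step. The coefficient $\langle s_{\lambda\cup\lambda},s_{\mu\cup\mu}s_\nu s_\nu\rangle$ computed via $\ch_{\GL}$ is the multiplicity of $V_{\lambda\cup\lambda}$ in the \emph{parabolic} induction $\Ind_{L\subseteq P}^{G}\Xi$, hence by adjunction the multiplicity of $\Xi$ in the parabolic restriction $V_{\lambda\cup\lambda}^{U^F}$ as an $L^F$-module --- not the multiplicity of $\Xi$ in the full restriction $V_{\lambda\cup\lambda}|_{L^F}$, which is generally much larger and need not vanish under the hypothesis (already for a torus in $\GLq{2}$ the ordinary restriction of an irreducible contains far more characters than its Jacquet module). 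So you cannot conclude $P_\Xi=0$. What you need, and what the hypothesis does give, is $P_\Xi P_U=0$: since $L^F$ normalizes $U^F$, the projection $P_U$ commutes with $\rho(l)$ for all $l\in L^F$ and hence with $P_\Xi$, so $P_\Xi P_U$ is the projection onto the $\Xi$-isotypic component of $V_{\lambda\cup\lambda}^{U^F}$, which vanishes exactly when $\langle\chi_{\lambda\cup\lambda},\Ind_{L\subseteq P}^{G}\chi_\Xi\rangle=0$. With that correction $\Tr(P_H P_\Xi P_U)=0$ and the proof closes; this is precisely why the paper computes the multiplicity $\langle\chi_{\lambda\cup\lambda},pr^*(\chi_{\mu\cup\mu}\times\chi_\nu\times\chi_\nu)\rangle_{P^F}$ over $P^F$ rather than over $L^F$. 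A minor secondary point: because $\iota$ involves inverse-transpose, $H_L\cap(\GL_n\times\GL_n)$ is a twisted diagonal and the representation of $\GL_n\times\GL_n$ admitting an $H_L$-fixed vector is $V_\nu\otimes V_\nu$ rather than $V_\nu\otimes V_{\nu^*}$, which is why the criterion reads $s_\nu s_\nu$; as you note, for $\nu$ concentrated at the trivial character the two coincide, so this does not affect the result.
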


\begin{proof}
Suppose that $\langle s_{\lambda\cup\lambda},s_{\mu\cup\mu}s_\nu s_\nu\rangle=0$. Take $|\mu|=m$, $|\nu|=n$ and assume $|\lambda|=m+n$ as otherwise $C_{\lambda/\mu}^\nu(q,q^2)=0$. Then as in the proof of Theorem \ref{thm: positivity}, it is equivalent to show that $\langle \phi_\lambda,\phi_\mu\ast \chi_\nu\rangle=0$ since it's a positive scalar multiple of $C_{\lambda/\mu}^\nu(q,q^2)$, again interpreting partitions as partition-valued functions in the same way as above.

Let $\Av_{k}$ denote the averaging operation sending a function $f$ on $\GLq{2k}$ to $x\mapsto |H^F|^{-2}\sum _{h,h'\in H^F}f(hxh')$ where the group $H^F$ should be determined from context (either $\Spq{2k}$ or $\GLq{k}$). Let $G=\GL_{2(n+m)}$, $H=\Sp_{2(n+m)}$ and $L\subseteq P$ denote the Levi and parabolic defining $\ast$. Then note $\langle \phi_\lambda,\phi_\mu\ast \chi_\nu\rangle$ is a non-zero multiple of
\begin{equation*}
\begin{split}
    &\langle \Res_{L\subseteq P}^G(\Av_{n+m}(\chi_{\lambda\cup\lambda})),\Av_n\times \Av_m(\chi_{\mu\cup\mu}\times \chi_{\nu}\times \chi_\nu)\rangle
    \\=&\langle pr_*i^*\Av_{n+m}(\chi_{\lambda\cup\lambda}),\chi_{\mu\cup\mu}\times \chi_{\nu}\times \chi_\nu\rangle
    \\=&\langle \Av_{n+m}(\chi_{\lambda\cup\lambda}),i_*pr^*(\chi_{\mu\cup\mu}\times \chi_{\nu}\times \chi_\nu)\rangle
    \\=&\Av_{n+m}(\chi_{\lambda\cup\lambda}\cdot i_* pr^*(\chi_{\mu\cup\mu}\times \chi_{\nu}\times \chi_\nu))(1)
\end{split}
\end{equation*}
where in the last line the multiplication on functions is convolution.

Now 
\begin{equation*}
\begin{split}
    &\chi_{\lambda\cup\lambda}\cdot i_* pr^*(\chi_{\mu\cup\mu}\times \chi_{\nu}\times\chi_\nu)(x)
    \\=&\Tr\left(\rho_{\lambda\cup\lambda}(x)\sum _{y\in P^F}\rho_{\lambda\cup\lambda}(y^{-1})pr^*(\chi_{\mu\cup\mu}\times \chi_{\nu}\times\chi_\nu)(y)\right)
\end{split}
\end{equation*}
where $\rho_{\lambda\cup\lambda}$ denotes the corresponding representation and this is $0$ unless
\begin{equation*}
    \langle \chi_{\lambda\cup\lambda},pr^*(\chi_{\mu\cup\mu}\times \chi_{\nu}\times \chi_\nu)\rangle_{P^F}\neq 0,
\end{equation*}
because $pr^*(\chi_{\mu\cup\mu}\times \chi_{\nu}\times\chi_\nu)$ is an irreducible character. By Frobenius reciprocity
\begin{equation*}
    \langle \chi_{\lambda\cup\lambda},\Ind_{L\subseteq P}^G(\chi_{\mu\cup\mu}\times \chi_{\nu}\times \chi_\nu)\rangle\neq 0,
\end{equation*}
where $\Ind_{L\subseteq P}^G$ is the standard parabolic induction for $\GL_n$. This happens exactly when $\langle s_{\lambda\cup\lambda},s_{\mu\cup\mu}s_\nu s_\nu\rangle$ is non-zero.

Here, the theorem can be extended to hold for all $q$ because $C_{\lambda/\mu}^\nu(q,q^2)$ is a rational function in $q$.
\end{proof}

\section{Computation of Spherical Function Values}
\label{sec:computation of spherical function values}
In this section, the values of spherical functions on the double coset of non-symplectic transvections is computed. Similar computations could be done for the double cosets generated by $\mathrm{diag}(a,1,\dotsc,1)$ for $a\in \mathbf{F}_q^*$. This section should be seen as an application of the characteristic map to use the Pieri rule for Macdonald polynomials to compute spherical function values.

\begin{proposition}[{\cite[VI, \S 6]{M95}}]
Let 
\begin{equation*}
\psi'_{\lambda/\mu}:=\prod _{s\in C_{\lambda/\mu}\setminus R_{\lambda/\mu}}\frac{b_\lambda(s;q,t)}{b_\mu(s;q,t)},
\end{equation*}
where
\begin{equation*}
b_\lambda(s;q,t):=\frac{1-q^{a(s)}t^{l(s)+1}}{1-q^{a(s)+1}t^{l(s)}},
\end{equation*}
and where $C_{\lambda/\mu}$ denotes the columns of $\lambda$ intersecting $\lambda/\mu$ and similarly $R_{\lambda/\mu}$ but for rows. Then
\begin{equation*}
P_\mu(x;q,t)e_r(x)=\sum _{\lambda}\psi'_{\lambda/\mu}P_\lambda(x;q,t),
\end{equation*}
where the sum is over partitions $\lambda$ such that $\lambda\setminus \mu$ is a vertical strip with $r$ boxes.
\end{proposition}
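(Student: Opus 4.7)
The Pieri rule stated here is Macdonald's formula (6.24) in \cite[VI, \S 6]{M95}. My plan is to deduce the $e_r$ case (vertical strips) from the companion $g_r$ case (horizontal strips) via the involution $\omega_{q,t}$, and to prove the horizontal-strip identity itself by a one-variable computation with skew Macdonald polynomials.

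First I would introduce the Macdonald analogue $g_r$ of $h_r$, defined by the generating function $\sum_{r\geq 0} g_r u^r = \prod_i (t x_i u;q)_\infty/(x_i u;q)_\infty$, which satisfies $g_r = Q_{(r)}$. The companion identity to establish is
\begin{equation*}
P_\mu(x;q,t)\, g_r(x;q,t) = \sum_\lambda \psi_{\lambda/\mu}(q,t)\, P_\lambda(x;q,t),
\end{equation*}
summed over $\lambda$ with $\lambda/\mu$ a horizontal strip of size $r$, where $\psi_{\lambda/\mu}$ is a similar product of $b_\lambda/b_\mu$ ratios indexed by $R_{\lambda/\mu}\setminus C_{\lambda/\mu}$. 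Applying $\omega_{q,t}$ to both sides, using $\omega_{q,t}P_\lambda(q,t) = Q_{\lambda'}(t,q)$ together with $\omega_{q,t}g_r(q,t) = e_r$ (which in turn uses $e_r = P_{(1^r)}$, forced by upper-triangularity with respect to dominance order), and then replacing every partition by its conjugate, reduces the vertical-strip formula for $e_r$ to the horizontal-strip formula for $g_r$, modulo the combinatorial identity $\psi_{\lambda'/\mu'}(t,q) = \psi'_{\lambda/\mu}(q,t)$.

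For the horizontal-strip identity itself, I would read off the coefficient of $P_\lambda$ in $P_\mu g_r$ as the pairing $\langle P_\mu g_r, Q_\lambda\rangle_{q,t}$ and use adjointness of multiplication by $P_\mu$ to rewrite it as $\langle g_r, Q_{\lambda/\mu}\rangle_{q,t}$, where $Q_{\lambda/\mu}$ is the dual skew function. Since $g_r = Q_{(r)}$, this reduces to extracting the coefficient of $P_{(r)}$ in the one-variable specialization of $P_{\lambda/\mu}(y;q,t)$ via the branching identity $P_\lambda(x,y;q,t) = \sum_\nu P_{\lambda/\nu}(y;q,t) P_\nu(x;q,t)$. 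This one-variable specialization vanishes unless $\lambda/\mu$ is a horizontal strip and is otherwise a scalar multiple of $y^r$.

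The main obstacle is pinning that scalar down to be precisely $\psi_{\lambda/\mu}$ --- that is, the product of $b_\lambda(s;q,t)/b_\mu(s;q,t)$ over the correct set of boxes. The standard route is an induction on $|\lambda|$ using the norm formula $\langle P_\lambda, P_\lambda\rangle_{q,t} = c'_\lambda/c_\lambda$ together with consistency at small cases, rather than a direct closed-form expansion of the branching coefficient. The remaining transposition identity $\psi_{\lambda'/\mu'}(t,q) = \psi'_{\lambda/\mu}(q,t)$ is a purely combinatorial check, noting that conjugation swaps arm lengths with leg lengths and rows with columns, so that the product over $R_{\lambda/\mu}\setminus C_{\lambda/\mu}$ becomes the product over $C_{\lambda'/\mu'}\setminus R_{\lambda'/\mu'}$ with $(q,t)$ swapped.
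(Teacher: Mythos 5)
The paper does not prove this proposition; it is imported verbatim from Macdonald \cite[VI, \S 6]{M95} (formula (6.24)(iii) there) and used as a black box in the spherical-function computations, so there is no in-paper argument to compare against. Your sketch is essentially Macdonald's own derivation --- reduce the vertical-strip/$e_r$ rule to the horizontal-strip/$g_r$ rule via $\omega_{q,t}$ and the transposition identity $\psi'_{\lambda/\mu}(q,t)=\psi_{\lambda'/\mu'}(t,q)$, then obtain the horizontal-strip rule from the one-variable specialization of skew Macdonald polynomials --- and the outline is sound. Two bookkeeping points to fix: the coefficient of $P_\lambda$ in $P_\mu g_r$ is $\varphi_{\lambda/\mu}$ (a product over $C_{\lambda/\mu}$), not $\psi_{\lambda/\mu}$; the $\psi$-coefficients belong to $Q_\mu g_r=\sum\psi_{\lambda/\mu}Q_\lambda$, and correspondingly your duality step $\langle g_r,\,\cdot\,\rangle=\langle Q_{(r)},\,\cdot\,\rangle$ extracts the one-variable coefficient of $Q_{\lambda/\mu}$ (which is $\varphi_{\lambda/\mu}$), whereas the branching identity $P_\lambda(x,y)=\sum_\nu P_{\lambda/\nu}(y)P_\nu(x)$ you invoke produces the coefficient of $P_{\lambda/\mu}$ (which is $\psi_{\lambda/\mu}$); you need to run the argument consistently on one side, e.g.\ apply $\omega_{t,q}$ to $Q_{\mu'}(t,q)g_r(t,q)=\sum\psi_{\lambda'/\mu'}(t,q)Q_{\lambda'}(t,q)$ to land exactly on $P_\mu(q,t)e_r=\sum\psi'_{\lambda/\mu}(q,t)P_\lambda(q,t)$. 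Finally, you correctly identify that the real content is evaluating the one-variable branching coefficient as the explicit product of $b$-ratios, but you only gesture at it; that evaluation is Macdonald's (6.20)--(6.24) via the principal specialization and the norm formula $\langle P_\lambda,P_\lambda\rangle=c'_\lambda/c_\lambda$, and any complete write-up would need to carry it out.
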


First the value at the identity will be computed as a similar computation shows up in the transvection computation, even though the value is already known to be $1$.

Define $\delta$ as the specialization homomorphism on $\otimes \Lambda$ given by
\begin{equation*}
\delta(p_n(\varphi))=\frac{1}{q_\varphi^n-1}.
\end{equation*}
The following lemma is essentially proven in \cite[IV, \S 6]{M95}.
\begin{lemma}
\label{lemma: identity value}
For any $F\in \otimes \Lambda$, $\langle F,e_n(f_1)\rangle=\delta(\omega\omega_{q,q^2}F)$.
\end{lemma}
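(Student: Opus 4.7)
The plan is to use a reproducing-kernel argument to reduce the claim to an equality of two algebra homomorphisms on $\otimes\Lambda$, then check this equality on the generators $p_k(f)$ via a character-theoretic computation. From the Cauchy identity established in the proof of the preceding proposition, one has the reproducing kernel $K(x,y)=\sum_{\mu}p_\mu(x)p_\mu(y)/\langle p_\mu,p_\mu\rangle$ (summed over $\mu:O(\mathbf{M})\to\mathcal{P}$) satisfying $\langle F(x),K(x,y)\rangle_x=F(y)$. Define the specialization homomorphism $\sigma_t$ on the $y$-variables by $\sigma_t(p_k(f_1))=(-1)^{k-1}t^k/(q^{2k}-1)$ and $\sigma_t(p_k(f))=0$ for $f\neq f_1$. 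Then
\[
K(x,\sigma_t)=\exp\!\Bigl(\sum_{k\geq 1}\tfrac{(-1)^{k-1}}{k}p_k(f_1;x)\,t^k\Bigr)=\sum_{n\geq 0}e_n(f_1;x)\,t^n,
\]
so $\sum_n\langle F,e_n(f_1)\rangle t^n=F(\sigma_t)$, and for $F$ homogeneous of degree $n$ in the $\mathbf{M}$-grading this specializes to $\langle F,e_n(f_1)\rangle=F(\sigma_1)$.

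It then suffices to prove $\sigma_1=\delta\circ\omega\omega_{q,q^2}$ as algebra homomorphisms $\otimes\Lambda\to\mathbf{C}(q)$, which can be checked on each generator $p_k(f)$. Inverting the defining identity $\widetilde p_n(\xi)=(-1)^{n-1}\sum_{x\in\mathbf{M}_n}\xi(x)\widetilde p_n(x)$ using character orthogonality $\sum_{\xi\in\mathbf{L}_n}\xi(y)^{-1}=(q^n-1)$ when $y=1$ and $0$ otherwise yields, for any root $a\in f$,
\[
p_k(f)=\frac{(-1)^{kd(f)-1}}{q^{kd(f)}-1}\sum_{\substack{\varphi\in O(\mathbf{L})\\d(\varphi)\mid kd(f)}}\Bigl(\sum_{\xi\in\varphi}\xi(a)^{-1}\Bigr)p_{kd(f)/d(\varphi)}(\varphi).
\]
Since $\omega\omega_{q,q^2}$ multiplies $p_m(\varphi)$ by $1/(q_\varphi^m+1)$, $\delta$ sends $p_m(\varphi)$ to $1/(q_\varphi^m-1)$, and $q_\varphi^{kd(f)/d(\varphi)}=q^{kd(f)}$ is independent of $\varphi$, both scalars can be pulled out of the sum. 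The residual $\sum_{\xi\in\mathbf{L}_{kd(f)}}\xi(a)^{-1}$ equals $q^{kd(f)}-1$ if $a=1$ and $0$ otherwise, so $\delta(\omega\omega_{q,q^2}p_k(f))=(-1)^{kd(f)-1}/(q^{2kd(f)}-1)$ when $f=f_1$ (so that $d(f_1)=1$, $a=1$) and vanishes otherwise, matching $\sigma_1(p_k(f))$ in every case.

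The main obstacle is the character-theoretic step: the involution $\omega\omega_{q,q^2}$ is diagonal in the $\varphi$-basis but not in the $f$-basis, so one must pass to $\varphi$-variables through the Frobenius duality between $\mathbf{M}_n$ and $\mathbf{L}_n$ before applying it. The computation succeeds precisely because $q_\varphi^{kd(f)/d(\varphi)}=q^{kd(f)}$ forces the scalars introduced by $\omega\omega_{q,q^2}$ and $\delta$ to be constant across the $\varphi$-sum, so the whole expression collapses to the single orthogonality indicator detecting whether $a=1$.
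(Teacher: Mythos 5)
Your proof is correct, but it takes a different route from the paper. The paper's proof is a two-line reduction: it observes that $\langle F,e_n(f_1)\rangle=\langle \omega\omega_{q,q^2}F,e_n(f_1)\rangle_{\GL_n}$ (since $\omega\omega_{q,q^2}$ rescales $p_\mu$ by exactly the ratio of the two inner products), and then cites Macdonald [IV, \S 6] for the $\GL_n$ statement $\langle F,e_n(f_1)\rangle_{\GL_n}=\delta(F)$, which is checked there on a basis. You instead give a self-contained argument entirely within the $(q,q^2)$-inner product: the reproducing kernel identifies the linear functional $\langle\cdot,e_n(f_1)\rangle$ with the specialization $\sigma_1$, and you then verify $\sigma_1=\delta\circ\omega\omega_{q,q^2}$ on the generators $p_k(f)$ by inverting the Frobenius duality between the $\mathbf{M}$- and $\mathbf{L}$-variables and applying character orthogonality — essentially re-deriving the content of Macdonald's cited computation rather than importing it. What your approach buys is independence from the external reference and an explicit display of why the identity holds (the scalars introduced by $\delta$ and $\omega\omega_{q,q^2}$ depend only on the total degree $kd(f)$, so the orthogonality sum collapses); what the paper's buys is brevity. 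Two small remarks: (i) your closing claim that $\omega\omega_{q,q^2}$ is ``not diagonal in the $f$-basis'' is not quite right — your own computation shows it acts on $p_k(f)$ by the scalar $1/(q^{kd(f)}+1)$; the real reason to pass to $\varphi$-variables is that $\delta$ is defined there. (ii) Your restriction to $F$ homogeneous of degree $n$ is necessary and is the intended reading of the lemma (for inhomogeneous $F$ the two sides see different graded pieces), so this is not a gap.
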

\begin{proof}
Since $\langle F,e_n(f_1)\rangle=\langle \omega\omega_{q,q^2} F,e_n(f_1)\rangle _{\GL_n}$ and $\omega\omega_{q,q^2}$ is invertible it suffices to check that $\langle F,e_n(f_1)\rangle_{\GL_n}=\delta(F)$. Since both sides are linear in $F$, it is enough to check on a basis, which is done in \cite[IV, \S 6]{M95}.
\end{proof}

The value of $\phi_\lambda(1)$ can be computed as follows. First note that $P_\lambda(x;q^{-2})=e_n$ when $\lambda=(1^n)$ and so
\begin{equation*}
\phi_\lambda(1)=|H^F|^{-1}\langle \phi_\lambda, I_{H^F}\rangle
\end{equation*}
can be computed by using the characteristic map and Lemma \ref{lemma: isometry}, giving
\begin{equation*}
\begin{split}
&(-1)^{|\lambda|}q^{-n}|\Spq{2n}|\psi_n(q^2)^{-1}q^{-(n^2-n)}\prod _{\varphi\in O(\mathbf{L})}q_\varphi^{-n(\lambda(\varphi)')}\langle J_{\lambda} (q,q^2),e_n(f_1)\rangle
\\=&(-1)^{|\lambda|}\prod _{\varphi\in O(\mathbf{L})}q_\varphi^{-n(\lambda(\varphi)')}\langle J_{\lambda}(q,q^2),e_n(f_1)\rangle.
\end{split}
\end{equation*}
Using Lemma \ref{lemma: identity value} and the fact that $\omega_{q,q^2}J_\lambda(q,q^2)=J_{\lambda'}(q^2,q)$ along with $\delta(\omega J_{\lambda'}(q^2,q))=(-1)^{|\lambda|}\prod _{s\in\lambda(\varphi)}q_\varphi^{a'(s)}$ \cite[VI, \S 8]{M95}, the inner product can be computed giving
\begin{equation*}
\begin{split}
&\prod _{\varphi\in O(\mathbf{L})}q_\varphi^{-n(\lambda(\varphi)')}\delta(J_{\lambda'}(q^2,q))
\\=&\prod _{\varphi\in O(\mathbf{L})}q_\varphi^{-n(\lambda(\varphi)')} \prod _{s\in\lambda(\varphi)}q_\varphi^{a'(s)}
\\=&1
\end{split}
\end{equation*}

The analogous computation of $\phi_\lambda(I_{H^Fg_\mu H^F})$ where $\mu(f_1)=(21^{n-2})$ and $0$ otherwise requires an additional lemma.

\begin{lemma}
\label{lemma: Pieri Expansion}
We have 
\begin{equation*}
    e_{n-1}(f_1)e_1(f_1)=\sum _{\|\lambda\|=n} \sum _{\lambda_0}\frac{q^{-n(\lambda_0')}c'_\lambda(q,q^2)}{c'_{\lambda_0}(q,q^2)(1-q)}\psi'_{\lambda/\lambda_0}\widehat{J}_{\lambda}(q,q^2),
\end{equation*}
where the second sum is over all partition-valued functions with $\|\lambda_0\|=n-1$ obtained by removing one box from some $\lambda(\varphi)$ with $d(\varphi)=1$, and $\widehat{J}_\lambda(q,q^2)$ denotes the dual basis to $J_\lambda(q,q^2)$ under the inner product.
\end{lemma}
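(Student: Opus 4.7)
The plan is to simplify the right-hand side using the Macdonald Pieri rule and Fourier inversion between the $f$- and $\varphi$-variables, thereby reducing the claim to a Cauchy-type expansion of $e_{n-1}(f_1)$ in the basis $\{\widehat{J}_\lambda(q,q^2)\}$ that can be verified using Lemma~\ref{lemma: identity value}. First I would use the identity $\widehat{J}_\lambda(q,q^2) = P_\lambda(q,q^2)/c'_\lambda(q,q^2)$, a consequence of $J_\lambda = c_\lambda P_\lambda$, $\langle P_\lambda,Q_\mu\rangle = \delta_{\lambda\mu}$, and the factorization of the inner product on $\bigotimes\Lambda$ as a product over $\varphi\in O(\mathbf{L})$ of Macdonald inner products with parameters $(q_\varphi,q_\varphi^2)$. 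The factor $c'_\lambda$ in the numerator cancels; after interchanging the order of summation the right-hand side becomes
\[
\frac{1}{1-q}\sum_{\|\lambda_0\| = n-1}\frac{q^{-n(\lambda_0')}}{c'_{\lambda_0}(q,q^2)}\Bigl(\sum_{\lambda}\psi'_{\lambda/\lambda_0}\,P_\lambda(q,q^2)\Bigr),
\]
where the inner sum runs over partition-valued functions $\lambda$ obtained from $\lambda_0$ by adding a single box to $\lambda_0(\varphi)$ for some $\varphi$ with $d(\varphi)=1$.

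Next I would apply the Macdonald Pieri rule componentwise: for each $\varphi$ with $d(\varphi)=1$, summing over new boxes added to $\lambda_0(\varphi)$ produces $P_{\lambda_0}(q,q^2)\cdot e_1(x_\varphi)$, and summing over all such $\varphi$ gives $P_{\lambda_0}(q,q^2)\sum_{\varphi:\,d(\varphi)=1}p_1(\varphi)$. Fourier inversion on $\mathbf{F}_q^*$, from the definition $p_1(\varphi)=\sum_{x\in\mathbf{F}_q^*}\xi(x)\,p_1(f_x)$ for $\xi\in\varphi$ together with orthogonality of characters of $\mathbf{F}_q^*$, yields $\sum_{\varphi:\,d(\varphi)=1}p_1(\varphi) = (q-1)\,p_1(f_1) = (q-1)\,e_1(f_1)$. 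The right-hand side has thus collapsed to a scalar multiple of
\[
e_1(f_1)\sum_{\|\lambda_0\|=n-1}\frac{q^{-n(\lambda_0')}}{c'_{\lambda_0}(q,q^2)}\,P_{\lambda_0}(q,q^2).
\]

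It therefore suffices to verify the identity expressing $e_{n-1}(f_1)$, up to sign, as this last sum. I would prove it by pairing with $J_\mu(q,q^2)$: the coefficient of $\widehat{J}_\mu$ in $e_{n-1}(f_1)$ equals $\langle e_{n-1}(f_1),J_\mu(q,q^2)\rangle$, and Lemma~\ref{lemma: identity value} rewrites this as $\delta(\omega\omega_{q,q^2}J_\mu(q,q^2)) = \delta(\omega J_{\mu'}(q^2,q))$, using $\omega_{q,q^2}J_\mu(q,q^2) = J_{\mu'}(q^2,q)$. The specialization $\delta(\omega J_{\mu'}(q^2,q)) = (-1)^{|\mu|}\prod_\varphi q_\varphi^{n(\mu(\varphi)')}$, the same formula used in the computation of $\phi_\lambda(1)=1$, then produces the required coefficient after dividing by $\langle J_\mu, J_\mu\rangle = c_\mu c'_\mu$.

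The main obstacle is the final step: careful tracking of signs and $q$-exponents through the compounded involutions $\omega$ and $\omega_{q,q^2}$, the partition-conjugation $\mu\mapsto\mu'$, the parameter swap $(q,q^2)\mapsto(q^2,q)$, and the factor $(q-1)/(1-q) = -1$ from Fourier inversion, is delicate; assembling these ingredients to recover the exact coefficient $q^{-n(\lambda_0')}\,c'_\lambda(q,q^2)/(c'_{\lambda_0}(q,q^2)(1-q))$ stated in the lemma requires some bookkeeping, but no ingredient beyond Pieri, Fourier inversion on $\mathbf{F}_q^*$, and Lemma~\ref{lemma: identity value} is needed.
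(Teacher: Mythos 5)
Your proposal is correct and uses exactly the same ingredients as the paper's proof --- the expansion of $e_k(f_1)$ in the dual basis $\widehat{J}_\lambda$ via Lemma~\ref{lemma: identity value}, the identification of the $q-1$ degree-one orbits in $O(\mathbf{L})$ with $e_1(f_1)$ via orthogonality of characters of $\mathbf{F}_q^*$, and the $r=1$ Pieri rule --- merely run from the right-hand side to the left rather than the other way around. The sign and $q$-exponent bookkeeping you flag as the delicate point is indeed the only remaining work, and it is treated just as briskly in the paper's own argument.
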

\begin{proof}
First, note that $e_k(f_1)=\sum _{\|\lambda\|=k}(-1)^{|\lambda|}\delta(\omega J_{\lambda'}(q^2,q))\widehat{J_{\lambda}}(q,q^2)$ which is an easy consequence of Lemma \ref{lemma: identity value}. Thus
\begin{equation*}
\begin{split}
&e_{n-1}(f_1)e_1(f_1)
\\=&\left(\sum_{\|\lambda_1\|=n-1} (-1)^{|\lambda_1|}\delta(\omega J_{\lambda_1'}(q^2,q))\widehat{J}_{\lambda_1}(q,q^2)\right)\left(\sum_{\|\lambda_2\|=1} (-1)^{|\lambda_2|}\delta(\omega J_{\lambda_2'}(q^2,q))\widehat{J}_{\lambda_2}(q,q^2)\right).
\end{split}
\end{equation*}

There are exactly $q-1$ partition valued functions $\lambda$ with $\|\lambda\|=1$, which give $e_1(\varphi)$ for $\varphi\in L_1$ as the polynomials $J_{\lambda_2}(q,q^2)$. Thus, apply Pieri's rule for $r=1$, and so $C_{\lambda/\mu}$ consists of the column that is added, and similarly for the row. The arm/leg lengths in $\lambda$ are exactly one more than in $\mu$ because of the added box, and so after relabeling $\lambda_1$ with $\lambda_0$
\begin{equation*}
e_{n-1}(f_1)e_1(f_1)=\sum _{\|\lambda\|=n}\sum _{\lambda_0}\frac{q^{-n(\lambda_0')}c'_\lambda(q,q^2)}{c'_{\lambda_1}(q,q^2)(1-q)}\psi'_{\lambda/\lambda_0}\widehat{J}_{\lambda}(q,q^2),
\end{equation*}
where $\delta(e_1(\varphi))=1$, and the remaining factors $c'_\lambda(q,q^2)$ and $(1-q)$ come from the scaling $\widehat{J}_\lambda(q,q^2)=c_\lambda'(q,q^2)^{-1}P_\lambda(q,q^2)$.
\end{proof}

Now the spherical functions of interest may be computed.

\begin{proposition}
Let $\mu:O(\mathbf{M})\rightarrow \mathcal{P}$ given by $\mu(f_1)=(21^{n-2})$ and $0$ otherwise. Then
\begin{equation*}
\phi_\lambda(g_\mu)=\frac{q^{2n-2}(q^2-1)}{(q^{2n}-1)(q^{2n-2}-1)}\left(\sum _{\lambda_0}\frac{c'_{\lambda}(q,q^2)\psi_{\lambda/\lambda_0}'}{c'_{\lambda_0}(q,q^2)(1-q)}q^{n(\lambda_0')-n(\lambda')}-\frac{q^{2n}-1}{q^{2n-2}(q^2-1)}\right)
\end{equation*}
for all spherical functions $\phi_\lambda$ where $\lambda_0$ is obtained from $\lambda$ by removing a single box from some $\lambda(\varphi)$ with $d(\varphi)=1$.
\end{proposition}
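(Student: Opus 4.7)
The plan is to adapt the technique just used for computing $\phi_\lambda(1)$. Since $\phi_\lambda$ is constant on double cosets,
\begin{equation*}
    \phi_\lambda(g_\mu)=|H^Fg_\mu H^F|^{-1}\langle\phi_\lambda,I_{H^Fg_\mu H^F}\rangle,
\end{equation*}
and applying Lemma \ref{lemma: isometry} together with the double-coset size formula from Proposition \ref{prop: double coset size} converts this into the inner product on $\bigotimes\Lambda$ of $\ch(\phi_\lambda)$ with $\ch(I_{H^Fg_\mu H^F})$, modulated by a computable prefactor. An explicit evaluation of $a_\mu(q)$ for $\mu(f_1)=(2,1^{n-2})$ yields $|C_\mu|_{q\mapsto q^2}=(q^{2n}-1)(q^{2n-2}-1)/(q^2-1)$; combined with $|H^F|=q^{n^2}\psi_n(q^2)$ this gives the prefactor
\begin{equation*}
    \frac{q^{-n}|H^F|^2}{|H^Fg_\mu H^F|}=\frac{q^{n^2-n}(q^2-1)\psi_n(q^2)}{(q^{2n}-1)(q^{2n-2}-1)}.
\end{equation*}

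Next, by definition $\ch(I_{H^Fg_\mu H^F})=q^{-(n-2)(n-1)}P_{(2,1^{n-2})}(f_1;q^{-2})$. Applying the Hall--Littlewood Pieri rule to $P_{(1^{n-1})}(f_1;q^{-2})\cdot e_1(f_1)=e_{n-1}(f_1)e_1(f_1)$ (the only shapes that can appear are $(1^n)$ and $(2,1^{n-2})$, with $\psi'_{(2,1^{n-2})/(1^{n-1})}=1$ and a quick telescoping for the other coefficient) yields
\begin{equation*}
    P_{(2,1^{n-2})}(f_1;q^{-2})=e_{n-1}(f_1)e_1(f_1)-\frac{1-q^{-2n}}{1-q^{-2}}e_n(f_1).
\end{equation*}

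The problem then reduces to computing the two inner products $\langle\ch(\phi_\lambda),e_n(f_1)\rangle$ and $\langle\ch(\phi_\lambda),e_{n-1}(f_1)e_1(f_1)\rangle$. The first is handled verbatim as in the identity-value computation above, producing $1/\psi_n(q^2)$. The second is evaluated by pairing $\ch(\phi_\lambda)$ against the dual-basis expansion provided by Lemma \ref{lemma: Pieri Expansion}: because $\widehat{J}_\mu(q,q^2)$ is dual to $J_\mu(q,q^2)$, only the term indexed by $\lambda$ survives, and the factor $\prod_\varphi q_\varphi^{-n(\lambda(\varphi)')}$ from $\ch(\phi_\lambda)$ combines with the $q$-exponent on $\lambda_0$ supplied by the Pieri expansion to produce $q^{n(\lambda_0')-n(\lambda')}$ in each summand, while the $c'$-ratios and $\psi'_{\lambda/\lambda_0}$ factors carry through unchanged.

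Assembling everything, the shared $1/\psi_n(q^2)$ factor cancels the $\psi_n(q^2)$ in the prefactor, $q^{n^2-n-(n-2)(n-1)}=q^{2n-2}$ collects the remaining $q$-exponents, and the identity $\tfrac{1-q^{-2n}}{1-q^{-2}}=\tfrac{q^{2n}-1}{q^{2n-2}(q^2-1)}$ converts the subtracted term into the one appearing in the statement. The main obstacle is the careful bookkeeping of the sign $(-1)^{|\lambda|}$ from $\ch(\phi_\lambda)$ together with the various $q^{\pm n(\lambda')}$ and $q^{\pm n(\lambda_0')}$ factors as one combines the Pieri decomposition with the two dual-basis computations; since the target formula has neither an overall sign nor a surplus $q$-exponent, these have to cancel cleanly against the $(-1)^{|\lambda|}$ and $q^{n(\lambda_0')}$-factors implicitly present when one invokes Lemma \ref{lemma: Pieri Expansion}.
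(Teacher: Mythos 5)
Your proposal follows essentially the same route as the paper: reduce to $\langle\ch(\phi_\lambda),\ch(I_{H^Fg_\mu H^F})\rangle$ via Lemma \ref{lemma: isometry}, expand $P_{(2,1^{n-2})}(f_1;q^{-2})$ as $e_{n-1}(f_1)e_1(f_1)$ minus a multiple of $e_n(f_1)$ (the paper cites this identity from the literature while you rederive it from the Pieri rule, an immaterial difference), and evaluate the two resulting inner products using Lemma \ref{lemma: identity value} and Lemma \ref{lemma: Pieri Expansion}. Your closing caveat about the sign and $q^{\pm n(\lambda_0')}$ bookkeeping is well taken: the exponent in Lemma \ref{lemma: Pieri Expansion} must indeed be read as $q^{+n(\lambda_0')}$ (as the paper's own final remark $\prod_\varphi q_\varphi^{n(\lambda_0(\varphi)')}=\delta(J_{\lambda_0'}(q^2,q))$ confirms) for the terms to combine into $q^{n(\lambda_0')-n(\lambda')}$ as claimed.
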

\begin{proof}
Note that
\begin{equation*}
\begin{split}
\phi_\lambda(g_\mu)&=\frac{1}{|H^Fg_\mu H^F|}\langle \phi_\lambda,I_{H^Fg_\mu H^F}\rangle
\\&=\frac{q^{2n-2}(q^2-1)}{(q^{2n}-1)(q^{2n-2}-1)}\left\langle (-1)^{|\lambda|}\prod _{\varphi\in O(\mathbf{L})}q_\varphi^{-n(\lambda(\varphi)')} J_{\lambda}(q,q^2),P_{\mu(f_1)}(f_1;q^{-2})\right\rangle
\end{split}
\end{equation*}
by Lemma \ref{lemma: isometry}.

From \cite[Eq. 2.4]{H92}, 
\begin{equation*}
    P_{\mu(f_1)}(f_1;q)=e_{n-1}(f_1)e_1(f_1)-\left(\sum _{i=0}^{n-1}q^{i}\right)e_n(f_1).
\end{equation*}
Lemma \ref{lemma: identity value} can handle the $e_n(f_1)$ term, and Lemma \ref{lemma: Pieri Expansion} will give the other term. Note that although the notation $c_\lambda'$, and other functions indexed by partitions, is used for partition-valued functions by taking a product over the domain, in almost all cases due to cancellation only one partition will be relevant.

Now evaluate the desired inner product, obtaining
\begin{equation*}
\begin{split}
&\langle (-1)^{|\lambda|} J_{\lambda}(q,q^2),P_{\mu(f_1)}(f_1,q^{-2})\rangle
\\=&\sum _{\lambda_0}\frac{q^{-n(\lambda_0')}c'_{\lambda}(q,q^2)\psi_{\lambda/\lambda_0}'}{c'_{\lambda_0}(q,q^2)(1-q)}-\frac{q^{2n}-1}{q^{2n-2}(q^2-1)}q^{-n(\lambda')}
\end{split}
\end{equation*}
which gives the desired result with the remaining factors since 
\begin{equation*}
\prod _{\varphi\in O(\mathbf{L})}q_\varphi^{n(\lambda_0(\varphi)')}=\delta(J_{\lambda_0'}(q^2,q)).
\end{equation*}
\end{proof}

\section{Character sheaves on symmetric spaces}
\label{sec: technical results}
Here, the bi-invariant parabolic induction defined in Section \ref{sec: induction for functions} is connected to the theory developed by Grojnowski \cite{G92} and Henderson \cite{H01t} regarding character sheaves on symmetric spaces. References are also given to the work of Shoji and Sorlin \cite{SS13,SS14a,SS14b} where possible for any cited results as both \cite{G92} and \cite{H01t} are unpublished. Some familiarity with the theory of $l$-adic sheaves and algebraic groups is assumed, although beyond the results of \cite{H01t}, only some basic formal properties will be needed to translate the results into the language of functions.

\subsection{Motivation}
For $\GLq{n}$, most properties of the characteristic map are more easily seen when viewing the domain not as the ring of class functions but rather the representation ring. In the symmetric space setting, the relationship between the spherical functions and representations is less well-behaved, so representations should be replaced by sheaves instead. 

All operations that can be done on representations have analogues for sheaves (for instance, tensor products, induction) but there are additional operations available. Moreover, the relationship between these sheaves and functions is well-understood and so anything that can be proven for sheaves has a function analogue. Working with sheaves, the usual notions of parabolic induction and restriction for groups also have a natural analogue.

\subsection{Preliminaries on $l$-adic sheaves}
This section mostly follows \cite{H01t}, see there for further details and references.

Fix some prime $l$ not equal to the characteristic of $\mathbf{F}_q$ (nothing will depend on the choice of $l$). Since only algebraic numbers appear, all statements in this section about $\overline{\mathbf{Q}_l}$ functions can be readily transferred to $\mathbf{C}$.

All varieties will be defined over $\overline{\mathbf{F}_q}$. An $\mathbf{F}_q$-structure on a variety $X$ is given by a \emph{Frobenius map} $F:X\to X$. A morphism of varieties with $\mathbf{F}_q$ structures is defined over $\mathbf{F}_q$ if it is $F$-equivariant. Let $X^F$ denote the corresponding variety over $\mathbf{F}_q$. 

If $X$ is a variety, let $D(X)$ denote the bounded derived category of constructible $\overline{\mathbf{Q}_l}$-sheaves of finite rank on $X$. The objects of this category are called complexes.

Say that a complex $K$ is $F$-stable if $F^*K\cong K$. Now to any $F$-stable complex $K$, and a choice of isomorphism $\phi:F^*K\to K$, there is an associated function $\chi_{K}: X^F\to \overline{\mathbf{Q}_l}$ called the \emph{characteristic function}. It is defined by
\begin{equation*}
    \chi_{K}(x):=\sum _{i}(-1)^i \Tr(\phi,\mathcal{H}^i_xK),
\end{equation*}
where $\mathcal{H}_x^iK$ denotes the stalk at $x$ of the cohomology sheaf of $K$. The dependence on $\phi$ will be dropped in the notation but a choice of such an isomorphism must always be made when taking the characteristic function.

If $f:X\to Y$ is a morphism of varieties, there are two functors, $f^*:D(Y)\to D(X)$, called the \emph{pullback} and $f_!:D(X)\to D(Y)$, called the \emph{compactly supported pushforward}.

There is a shift functor $[n]:D(X)\to D(X)$ taking a complex $K$ to the complex $K[n]$ with $K[n]_m=K_{n+m}$ and this functor commutes with pushforward and pullback and the corresponding effect on the characteristic function is multiplication by $(-1)^n$.

Finally, direct sums of complexes in $D(X)$ correspond to addition and the tensor product of complexes in $D(X)$ corresponds to multiplication of the corresponding characteristic functions.

If $G$ is an algebraic group acting on $X$, let $D^G(X)$ denote the $G$-equivariant derived category. All the constructions above have analogues in the equivariant setting. In particular, if the action is defined over $\mathbf{F}_q$ then the characteristic function of an equivariant sheaf defined over $\mathbf{F}_q$ is invariant under the action of $G^F$.

If $f:X\to Y$ is a principal $G$-bundle, then there is an equivalence of categories $f^*:D(Y)\to D^G(X)$ (whose composition with the forgetful functor $D^G(X)\to D(X)$ gives the usual pullback) and the inverse is denoted by $f_\flat: D^G(X)\to D(Y)$. If $f$ is defined over $\mathbf{F}_q$ then the characteristic functions satisfy
\begin{equation*}
    \chi_{f_\flat K}(y)=|G^F|^{-1}\sum _{f(x)=y, x\in X^F}\chi_K(x).
\end{equation*}
If $H\subseteq G$, then this equivalence also restricts to an equivalence $D^{H\times H}(G)\cong D^H(G/H)$ between the $H$ bi-equivariant sheaves on $G$ and $H$ equivariant sheaves on $G/H$ and so when convenient bi-equivariant sheaves can be thought to live on the symmetric space $G/H$ (this is like how there is no difference between working with bi-invariant functions on $G$ and invariant functions on $G/H$).

\subsection{Induction functors}
Here, the induction functors analogous to Deligne-Lusztig induction for $\GL_n$ are introduced. They were first introduced by Grojnowski \cite{G92} but the definition used follows that of Henderson \cite{H01t} which was extended in the work of Shoji and Sorlin \cite{SS14b}.

It is important to note that these functors do not preserve any sort of $\mathbf{F}_q$ structure unless the parabolic subgroup used is rational. Even if $P$ is rational, to define induction for functions some choice of $\mathbf{F}_q$ structure must be placed on the sheaves themselves. For now, the definition does not involve any $\mathbf{F}_q$ structure whatsoever.

For any $\iota$-stable Levi subgroup $L$ with $P=LU$ an $\iota$-stable parabolic, define a parabolic induction functor $\Ind_{L\subseteq P}^G:D^{H_L\times H_L}(L)\to D^{H\times H}(G)$ taking bi-equivariant sheaves on $L$ to $G$ as follows. Consider the diagram
\begin{equation*}
\begin{tikzcd}
L&H\times P\times H\arrow[l,"pr",swap]\arrow[r,"q"]&H\times _{H_P}P\times _{H_P}H\arrow[r,"i"]&G
\end{tikzcd}
\end{equation*}
where $pr(h,p,h')=\overline{p}$, $q$ is the quotient morphism and $i(h,p,h')=hph'$ and the action of $H_P\times H_P$ on $L$ is given by $(h,h')\cdot l=\overline{h}l\overline{h'}^{-1}$ with $H\times H$ acting trivially, the action of $H_P\times H_P$ on $H\times P\times H$ is by $(h_1,h_2)\cdot(h,p,h')=(hh_1^{-1},hph_2^{-1},h_2h')$ and the action of $H\times H$ is by $(h_1,h_2)\cdot (h,p,h')=(h_1h,p,h'h_2^{-1})$, and $H\times H$ acts on the last two spaces on the left and right similarly.

Then all the maps are equivariant, with $pr$ being $H_P\times H_P$ equivariant as $P$ is $\iota$-stable, so define the induction functor $\Ind_{L\subseteq P}^G:D^{H_L\times H_L}(L)\to D^{H\times H}(G)$ by
\begin{equation*}
    \Ind_{L\subseteq P}^G:=i_! q_\flat pr^*[\dim U+2\dim H/H_P].
\end{equation*}

A key result is that the composition of two induction functors is again an induction functor. The following proposition is given in \cite[Prop. 2.19]{H01t} and also in \cite[Prop. 4.3]{SS14b} and the proof is similar to the usual proof of transitivity of induction for groups.

\begin{proposition}[{\cite[Prop. 2.19]{H01t}}]
\label{prop: transitivity of induction}
Let $M\subseteq L$ be $\iota$-stable Levi subgroups, and let $Q\subseteq P$ be $\iota$-stable parabolic subgroups with Levi factors $M$ and $L$ respectively. Then
\begin{equation*}
    \Ind_{L\subseteq P}^G \circ \Ind_{M\subseteq Q\cap L}^L\cong \Ind_{M\subseteq Q}^G.
\end{equation*}
\end{proposition}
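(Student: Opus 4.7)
The plan is to follow the same strategy as the classical proof of transitivity of parabolic induction (and its Deligne-Lusztig analogue), now carried out in the equivariant derived category; this is parallel to how Proposition \ref{prop: transitivity for functions} was established at the level of functions. The core observation is that both the left-hand and right-hand sides of the isomorphism arise as $i_! q_\flat pr^*$ applied to a single suitably chosen diagram, so that the content of the proof is essentially a diagram chase combined with proper base change and the standard compatibility of the inverse quotient functor $(\cdot)_\flat$ with $!$-pushforward along equivariant maps.

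Concretely, I would first write out the two-step diagram computing $\Ind_{L\subseteq P}^G \circ \Ind_{M\subseteq Q\cap L}^L$: apply $pr_1^*, q_{1\flat}, i_{1!}$ associated to $M\subseteq Q\cap L$ inside $L$, then $pr_2^*, q_{2\flat}, i_{2!}$ associated to $L\subseteq P$ inside $G$. The aim is to identify the combined functor with $i_! q_\flat pr^*$ for the diagram defining $\Ind_{M\subseteq Q}^G$. To produce the common refinement, I would use the semidirect product structure $Q = (Q\cap L)\ltimes U_P$ (so that the projection $Q\to M$ factors as $Q\to Q\cap L\to M$) and the corresponding factorization $H_Q = H_{Q\cap L}\ltimes (H\cap U_P)$, realising $H\times_{H_Q} Q\times_{H_Q} H$ as a fibration with total space built from $H\times_{H_P}\bigl(H_L\times_{H_{Q\cap L}}(Q\cap L)\times_{H_{Q\cap L}} H_L\bigr)\times_{H_P} H$.

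The key steps are then: (i) commute $pr_2^*$ past $i_{1!}$ and $q_{1\flat}$ via base change applied to a cartesian square built from the projection $P\to L$; (ii) identify the resulting iterated balanced product with the single balanced product $H\times_{H_Q} Q \times_{H_Q} H$ using the semidirect decomposition of $Q$; (iii) recognise the composition of the two pullbacks as $pr^*$ and the composition of the two $i_!$'s as a single $i_!$ for the inclusion $Q\hookrightarrow G$; (iv) check the dimension shifts, which add up correctly because $\dim U_Q = \dim U_P + \dim(U_Q\cap L)$ and $\dim H/H_Q = \dim H/H_P + \dim H_L/H_{Q\cap L}$, so that $\dim U_P + 2\dim H/H_P + \dim(U_Q\cap L) + 2\dim H_L/H_{Q\cap L} = \dim U_Q + 2\dim H/H_Q$.

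The main obstacle is bookkeeping: tracking all of the equivariance groups ($H\times H$, $H_P\times H_P$, $H_L\times H_L$, $H_{Q\cap L}\times H_{Q\cap L}$) simultaneously through the various balanced products, and verifying that the base change squares used are genuinely cartesian with compatible equivariant structures. Once that is set up correctly, the isomorphism is forced by proper base change and the equivalence $q_\flat \circ q^* \simeq \mathrm{id}$ for principal bundles. Since this result already appears in the literature (\cite[Prop.\ 2.19]{H01t}, \cite[Prop.\ 4.3]{SS14b}), I would ultimately just cite it and indicate that the proof is formally the same as the proof of Proposition \ref{prop: transitivity for functions}, lifted from functions to sheaves.
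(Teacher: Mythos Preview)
Your proposal is correct and matches the paper's treatment: the paper does not prove this proposition either, but simply cites \cite[Prop.~2.19]{H01t} and \cite[Prop.~4.3]{SS14b} and remarks that the argument is the same as the usual transitivity proof for parabolic induction on groups. Your sketch of the diagram chase (base change, compatibility of $q_\flat$, additivity of the shifts) is exactly that standard argument, and your final suggestion to just cite the references is precisely what the paper does.
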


The following proposition follows easily from the definition of induction.
\begin{proposition}
\label{prop: Ind is same}
Let $L$ be a rational $\iota$-stable Levi subgroup, with a rational parabolic $P=LU$. Then if $K$ is a complex on $L$ with an $\mathbf{F}_q$ structure, and $\Ind_{L\subseteq P}^G(K)$ is given the induced $\mathbf{F}_q$ structure,
\begin{equation*}
    \chi_{\Ind_{L\subseteq P}^G(K)}=|H^F\cap P^F|^{-2}(-1)^{\dim U}\sum _{\substack{h,h'\in H^F\\ hxh'\in P^F}}\chi_K(\overline{hxh'}).
\end{equation*}
\end{proposition}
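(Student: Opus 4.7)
The approach is to unwind the definition $\Ind_{L\subseteq P}^G = i_! q_\flat pr^*[\dim U + 2\dim H/H_P]$ and apply, in turn, the standard characteristic function formulas for each constituent functor. Since $L$, $P$, and $G$ are all rational and the three morphisms $pr$, $q$, $i$ are defined over $\mathbf{F}_q$, the induced $\mathbf{F}_q$-structure on each intermediate complex is canonical and compatible with these formulas; the calculation is then entirely formal.

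Specifically, the pullback satisfies $\chi_{pr^* K}(h,p,h') = \chi_K(\overline{p})$. Since $q$ is a principal $H_P\times H_P$-bundle defined over $\mathbf{F}_q$, the formula recalled in the excerpt gives
\begin{equation*}
\chi_{q_\flat pr^* K}(y) = |H_P^F|^{-2} \sum_{\substack{(h,p,h')\in (H\times P\times H)^F\\ q(h,p,h') = y}} \chi_K(\overline{p}),
\end{equation*}
where one applies the principal bundle formula once for each factor of $H_P$ (or equivalently for the product group). Combining with the Grothendieck trace formula $\chi_{i_! M}(g) = \sum_{i(y)=g,\,y\in Y^F} \chi_M(y)$ for the pushforward yields
\begin{equation*}
\chi_{i_! q_\flat pr^* K}(g) = |H_P^F|^{-2} \sum_{\substack{(h,p,h')\in (H\times P\times H)^F\\ hph' = g}} \chi_K(\overline{p}).
\end{equation*}

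To match the stated formula, I reparametrize the fiber. For fixed $g \in G^F$, the equation $hph' = g$ uniquely determines $p = h^{-1}g(h')^{-1}$, subject to the constraint that $h^{-1}g(h')^{-1}\in P^F$. Substituting $h\mapsto h^{-1}$, $h' \mapsto (h')^{-1}$, which is a bijection of $H^F\times H^F$, rewrites the sum as
\begin{equation*}
|H^F_P|^{-2} \sum_{\substack{h,h'\in H^F\\ hgh'\in P^F}} \chi_K(\overline{hgh'}).
\end{equation*}
Since $H_P = H\cap P$ by definition, the prefactor is exactly $|H^F\cap P^F|^{-2}$. Finally, the shift $[\dim U + 2\dim H/H_P]$ contributes a sign of $(-1)^{\dim U + 2\dim H/H_P} = (-1)^{\dim U}$, giving the claimed identity.

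The proof is essentially formal, so there is no substantial obstacle. The only points requiring care are (i) checking that the induced Frobenius isomorphism on $\Ind_{L\subseteq P}^G(K)$ is precisely the one consistent with these characteristic function formulas (which follows from functoriality of $pr^*$, $q_\flat$, $i_!$ over $\mathbf{F}_q$), and (ii) applying the principal bundle formula to the product group $H_P\times H_P$ acting on $H\times P\times H$, which either follows by iterating the single-group formula or by the obvious extension to products.
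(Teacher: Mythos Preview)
Your proposal is correct and is exactly what the paper intends: it states only that the proposition ``follows easily from the definition of induction,'' and your argument is precisely the routine unwinding of $\Ind_{L\subseteq P}^G = i_! q_\flat pr^*[\dim U + 2\dim H/H_P]$ using the characteristic-function identities for $pr^*$, $q_\flat$, $i_!$, and the shift that the paper records in the preliminaries.
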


\subsection{Characteristic functions of induced complexes}
The induction functors define parabolic induction of complexes but give no rational structure on the resulting complex and so more is needed to define induction of functions. The idea is to restrict to a smaller set of sheaves which are closed under induction and have a canonical $\mathbf{F}_q$ structure. For more details on this section, see \cite[Ch. 5, 6]{H01t} or \cite{S16}, noting that any sheaf on $G/H\times V$ can be restricted to $G/H\times \{0\}$.

First, consider the group case. For any maximal torus $T$ and a tame rank one local system $\mathcal{L}$ on $T$, there is an associated complex $K_{(T,\mathcal{L})}$ defined using intersection cohomology. This complex depends only on the $G$ orbit of $(T,\mathcal{L})$.

Moreover, if $T$ is rational and $\mathcal{L}$ is $F$-stable, there is a unique $\mathbf{F}_q$ structure making the characteristic function an irreducible character of $T^F$. Then $K_{(T,\mathcal{L})}$ has an induced $\mathbf{F}_q$ structure coming from the structures on $T$ and $\mathcal{L}$ (through the construction in terms of intersection cohomology). The corresponding characteristic functions $\chi_{K_{(T,\mathcal{L})}}$ depend only on the $G^F$ orbit of $(T,\mathcal{L})$. 

These complexes $K_{(T,\mathcal{L})}$ are exactly the ones obtained by inducing $\mathcal{L}$ from $T$ with any choice of Borel subgroup. The benefit of the intersection cohomology definition is that it gives an $\mathbf{F}_q$ structure.

In the symmetric space setting where $G=\GL_{2n}$ and $H=\Sp_{2n}$, the above still holds with modifications, namely replacing a maximal torus with a maximal $\iota$-stable torus and characters replaced with spherical functions. There is only one $H$-conjugacy class of maximal $\iota$-stable tori, and so in particular every maximal $\iota$-stable torus $T$ contains a maximal $\iota$-split torus $T^{-\iota}$ with $T^{-\iota}\cong T/T_H$ and is contained in an $\iota$-stable Borel subgroup. The $H^F$ conjugacy classes of maximal $\iota$-split tori are in bijection with the $\GL_n^F$-conjugacy classes of $F$-stable maximal tori of $\GL_n$, as every maximal $\iota$-split torus can be conjugated to lie in $\GL_n\times \GL_n$ (acting on the first and last $n$ coordinates) where it acts by $(t,t)$.

There are complexes $K_{(T^{-\iota},\mathcal{L})}$ associated to tame rank one local systems $\mathcal{L}$ on maximal $\iota$-split tori $T^{-\iota}$. When both have an $\mathbf{F}_q$ structure, then $K_{(T^{-\iota},\mathcal{L})}$ does as well and their characteristic functions are invariant under $H^F$. See \cite{H01t} or \cite{S16} for details on their construction and properties.

The transitivity of induction implies that the collection of complexes $\Ind_{T\subseteq B}^G(\mathcal{L})$ is closed under bi-invariant parabolic induction. The method of defining parabolic induction for functions through the functor will be to give these complexes an $\mathbf{F}_q$ structure, and then show that their characteristic functions form a basis for the bi-invariant functions, uniquely defining induction for any bi-invariant function. The next theorem relates the induced complexes to the complexes $K_{(T^{-\iota},\mathcal{L})}$ and is due to Grojnowski \cite{G92}, although see also \cite[Theorem 1.16]{SS14a} or \cite[Theorem 5.5]{H01t} where a more complete proof is given.

\begin{theorem}[{\cite[Lemma 7.4.4]{G92}}]
\label{thm: DL sheaf iso}
Let $T$ be a rational maximal $\iota$-stable torus and $\mathcal{L}$ an $F$-stable tame rank one local system on $T^{-\iota}$. There are isomorphisms
\begin{equation*}
    \Ind_{T\subseteq B}^G(\mathcal{L})\cong\Ind_{L_0\subseteq P_0}^G(K^{L_0}_{(T^{-\iota},\mathcal{L})})\cong  K_{(T^{-\iota},\mathcal{L})}\otimes H_c^\bullet (\mathcal{B}^{Z_H(T^{-\iota})})[\dim T^{-\iota}].
\end{equation*}
where $\mathcal{B}^{Z_H(T^{-\iota})}$ denotes the flag variety of $Z_H(T^{-\iota})$ and the second is defined over $\mathbf{F}_q$.
\end{theorem}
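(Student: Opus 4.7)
The plan is to establish the two isomorphisms separately. The first is essentially formal, following from the transitivity of bi-invariant parabolic induction established earlier. The second is the substantive content, which I would prove via proper base change after identifying the generic fiber of the defining geometric correspondence with the flag variety of $Z_H(T^{-\iota})$.

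First I would choose the Borel $B$ so that $B \subseteq P_0$ and $B_0 := B \cap L_0$ is an $\iota$-stable Borel of $L_0$ containing $T$; this is possible by standard compatibility of Borels and parabolics. Transitivity (Proposition \ref{prop: transitivity of induction}) then yields
\begin{equation*}
\Ind_{T \subseteq B}^G(\mathcal{L}) \;\cong\; \Ind_{L_0 \subseteq P_0}^G\bigl(\Ind_{T \subseteq B_0}^{L_0}(\mathcal{L})\bigr).
\end{equation*}
Under the identification of the symmetric space $L_0/H_{L_0}$ with $\GL_n$ via $\ell \mapsto \ell\iota(\ell)^{-1}$ (as in the discussion preceding Proposition \ref{prop: ind for L_0}), the inner induction is the usual Deligne--Lusztig induction of $\mathcal{L}$ from $T^{-\iota}$ through $B_0^{-\iota}$, which by definition is the group-case character sheaf $K^{L_0}_{(T^{-\iota},\mathcal{L})}$. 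This gives the first isomorphism.

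For the second isomorphism, I would analyze $\Ind_{L_0 \subseteq P_0}^G(K^{L_0}_{(T^{-\iota},\mathcal{L})})$ through the diagram defining induction, applying proper base change to the map $i\colon H \times_{H_{P_0}} P_0 \times_{H_{P_0}} H \to G$. On the regular semisimple $\iota$-split locus, the sheaf $K_{(T^{-\iota},\mathcal{L})}$ is simply (a shift of) the pushforward of $\mathcal{L}$ along a finite étale covering, so the stalk analysis reduces to computing the fiber of $i$ over a regular point $x$. The crux is to show that this fiber is $H$-equivariantly a torsor for $\mathcal{B}^{Z_H(T^{-\iota})}$: indeed it parameterizes the $H$-conjugates of $P_0$ containing $x$, which by the conjugacy of $\iota$-stable Borels of $Z_H(T^{-\iota})$ is the flag variety of $Z_H(T^{-\iota})$. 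With this identification, the stalk of the induced sheaf at $x$ splits as a tensor product of the stalk of $K_{(T^{-\iota},\mathcal{L})}$ at $x$ with $H_c^\bullet(\mathcal{B}^{Z_H(T^{-\iota})})$; matching the internal shift $[\dim U + 2\dim H/H_{P_0}]$ in the definition of $\Ind_{L_0 \subseteq P_0}^G$ against $[\dim T^{-\iota}]$ produces the claimed shift. The comparison morphism then extends from the open stratum to all of $G$ by the universal property of intermediate extensions, since both sides are (up to shift) IC-sheaves of the same local system on the regular locus.

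The main obstacle is the fiber analysis for $i$, i.e.\ identifying the incidence variety of $H$-translates of $P_0$ containing a regular $\iota$-split element with the flag variety of $Z_H(T^{-\iota})$. Once this geometric input is in hand, the base change computation and the extension across the closure are routine, and the $\mathbf{F}_q$-rationality of the second isomorphism follows automatically: $K_{(T^{-\iota},\mathcal{L})}$ inherits a canonical $F$-structure from that of $\mathcal{L}$ and the intersection cohomology construction, $H_c^\bullet(\mathcal{B}^{Z_H(T^{-\iota})})$ carries its natural Frobenius because $Z_H(T^{-\iota})$ is defined over $\mathbf{F}_q$, and the base change isomorphism is canonical and hence $F$-equivariant.
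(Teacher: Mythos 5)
First, a point of order: the paper does not prove this theorem. It is imported verbatim from Grojnowski \cite[Lemma 7.4.4]{G92}, with the paper explicitly deferring to \cite[Theorem 5.5]{H01t} and \cite[Theorem 1.16]{SS14a} for "a more complete proof." So there is no in-paper argument to compare against; what can be assessed is whether your sketch would stand on its own. Its overall shape does match the strategy of those references: the first isomorphism is indeed formal from transitivity (Proposition \ref{prop: transitivity of induction}) together with the $\GL_n$ group-case fact that $\Ind_{T\subseteq B_0}^{L_0}(\mathcal{L})\cong K^{L_0}_{(T^{-\iota},\mathcal{L})}$ under the identification $L_0/H_{L_0}\cong \GL_n$, and the second isomorphism really does hinge on identifying the fibers of the proper map $i$ over the regular $\iota$-split locus with (a disjoint union, indexed by the little Weyl group, of copies of) $\mathcal{B}^{Z_H(T^{-\iota})}$ --- note it is a union of copies, not a single torsor; the Weyl-group components are what reassemble into the finite covering computing $K_{(T^{-\iota},\mathcal{L})}$ on the open stratum.

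The genuine gap is the final step. You conclude the global isomorphism "by the universal property of intermediate extensions, since both sides are (up to shift) IC-sheaves of the same local system on the regular locus." But the left-hand side being an intermediate extension (equivalently, a semisimple perverse sheaf with no summands supported on smaller strata) is precisely the hard content of the theorem, not something you may assume. An open-dense stalk computation alone does not determine a complex in $D^{H\times H}(G)$; the induced complex could a priori contain extra direct summands supported on the boundary of the regular $\iota$-split locus, and these would be invisible to your base-change calculation. The actual proofs close this gap by combining the properness of $i$ (which holds because $H/H_{P_0}$ is a flag variety of $H$) and the decomposition theorem applied to the pure IC source, and then ruling out boundary-supported summands via explicit dimension estimates on the fibers of $i$ over non-regular points. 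Until that support analysis is supplied, your argument establishes only that the two sides agree after restriction to the regular $\iota$-split locus. The $\mathbf{F}_q$-rationality claim for the second isomorphism is fine once the isomorphism itself is established, since every map in the construction is defined over $\mathbf{F}_q$.
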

This theorem means that $\Ind_{T\subseteq B}^G(\mathcal{L})$ has a canonical $\mathbf{F}_q$ structure and so the characteristic function of induced complexes, at least from a maximal torus, may be taken. Also, since induction from a maximal torus does not depend on the Borel chosen, the notation $\Ind_{T\subseteq B}^G$ will be used even when a particular Borel is not specified.

The characteristic functions of the complexes $K_{(T,\mathcal{L})}$ are related to the Deligne-Lusztig characters. These functions are essentially the basic functions already defined.

\begin{proposition}
Let $T$ be a rational maximal $\iota$-stable torus and $\theta$ an irreducible character of $(T^{-\iota})^F$. Let $\mathcal{L}_\theta$ denote the corresponding local system on $T^{-\iota}$. Then
\begin{equation*}
\begin{split}
    (-1)^n\chi_{K_{(T^{-\iota},\mathcal{L}_\theta)}}&=\frac{|(T^{-\iota})^F|}{|(T^{-\iota})^F|_{q\mapsto q^2}}\chi_{\Ind_{L_0\subseteq P_0}^G(K_{(T^{-\iota},\mathcal{L}_\theta)})}
    \\&=\zeta_{T^{-\iota}}^{\GL_{n}}(\cdot|\theta)_{q\mapsto q^2}.
\end{split}
\end{equation*}
\end{proposition}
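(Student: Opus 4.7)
The plan is to combine Theorem \ref{thm: DL sheaf iso} with Proposition \ref{prop: ind for L_0} and standard properties of characteristic functions. I treat the two equalities separately.

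For the first equality, I would apply the second isomorphism from Theorem \ref{thm: DL sheaf iso}, namely
\begin{equation*}
\Ind_{L_0\subseteq P_0}^G(K^{L_0}_{(T^{-\iota},\mathcal{L}_\theta)}) \cong K_{(T^{-\iota},\mathcal{L}_\theta)}\otimes H_c^\bullet(\mathcal{B}^{Z_H(T^{-\iota})})[\dim T^{-\iota}],
\end{equation*}
and take characteristic functions of both sides. The shift $[\dim T^{-\iota}]=[n]$ contributes a factor of $(-1)^n$, the tensor product corresponds to pointwise multiplication, and by the Lefschetz trace formula the characteristic function of $H_c^\bullet(\mathcal{B}^{Z_H(T^{-\iota})})$ is the constant $|\mathcal{B}^{Z_H(T^{-\iota})}(\mathbf{F}_q)|$. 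This yields
\begin{equation*}
\chi_{\Ind_{L_0\subseteq P_0}^G(K^{L_0}_{(T^{-\iota},\mathcal{L}_\theta)})} = (-1)^n|\mathcal{B}^{Z_H(T^{-\iota})}(\mathbf{F}_q)|\,\chi_{K_{(T^{-\iota},\mathcal{L}_\theta)}}.
\end{equation*}
It remains to identify the counting factor. The centralizer $Z_H(T^{-\iota})$ is a product of Weil restrictions: if $F$ acts on the $n$ coordinates of $T^{-\iota}$ with cycles of lengths $k_1,k_2,\dotsc$, then each $F$-orbit of coordinates gives a $2k_i$-dimensional symplectic subspace of $\mathbf{F}_q^{2n}$, and $Z_H(T^{-\iota})\cong\prod_i \mathrm{Res}_{\mathbf{F}_{q^{k_i}}/\mathbf{F}_q}\SL_2$. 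Its flag variety is correspondingly a product of copies of $\mathbb{P}^1$ (with appropriate Frobenius twist), giving $|\mathcal{B}^{Z_H(T^{-\iota})}(\mathbf{F}_q)|=\prod_i(q^{k_i}+1)=|(T^{-\iota})^F|_{q\mapsto q^2}/|(T^{-\iota})^F|$. Rearranging then gives the first equality.

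For the second equality, I would first use Proposition \ref{prop: Ind is same} to translate the sheaf induction into function-level induction:
\begin{equation*}
\chi_{\Ind_{L_0\subseteq P_0}^G(K^{L_0}_{(T^{-\iota},\mathcal{L}_\theta)})} = (-1)^{\dim U_0}\Ind_{L_0\subseteq P_0}^{G/H}\bigl(\chi_{K^{L_0}_{(T^{-\iota},\mathcal{L}_\theta)}}\bigr),
\end{equation*}
where $\dim U_0 = n^2$ so that $(-1)^{\dim U_0}=(-1)^n$ (since $n^2$ and $n$ have the same parity). Under the identification $L_0^{-\iota}\cong\GL_n$ described in Section \ref{sec: induction for functions}, the complex $K^{L_0}_{(T^{-\iota},\mathcal{L}_\theta)}$ corresponds (up to a sign, to be tracked) to the classical Deligne-Lusztig sheaf on $\GL_n$, whose characteristic function agrees with $\zeta_T^{\GL_n}(\cdot|\theta)$. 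Applying Proposition \ref{prop: ind for L_0} together with $|T^F|=|(T^{-\iota})^F|$ then gives
\begin{equation*}
\frac{|(T^{-\iota})^F|}{|(T^{-\iota})^F|_{q\mapsto q^2}}\chi_{\Ind_{L_0\subseteq P_0}^G(K^{L_0}_{(T^{-\iota},\mathcal{L}_\theta)})} = \zeta_{T^{-\iota}}^{\GL_n}(\cdot|\theta)_{q\mapsto q^2},
\end{equation*}
provided all the signs align.

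The main obstacle is the sign bookkeeping: one must verify that the $(-1)^n$ from the shift in Theorem \ref{thm: DL sheaf iso}, the $(-1)^{\dim U_0}$ from Proposition \ref{prop: Ind is same}, and the sign relating $\chi_{K^{L_0}_{(T^{-\iota},\mathcal{L}_\theta)}}$ to the classical Deligne-Lusztig character on $\GL_n$ all cancel consistently to leave no extra sign on the right-hand side. A secondary technical point is identifying $Z_H(T^{-\iota})$ with the claimed product of restrictions of scalars of $\SL_2$ for every rational $F$-class of $T^{-\iota}$, but this follows from the concrete decomposition of $\mathbf{F}_q^{2n}$ into symplectic planes indexed by the $F$-orbits on the coordinates of $T^{-\iota}$.
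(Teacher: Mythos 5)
Your treatment of the first equality is exactly the paper's route: the paper simply says it "follows from the previous theorem by taking characteristic functions," and your elaboration -- the shift $[\dim T^{-\iota}]=[n]$ giving $(-1)^n$, the Lefschetz number $|\mathcal{B}^{Z_H(T^{-\iota})}(\mathbf{F}_q)|=\prod_i(q^{k_i}+1)=|(T^{-\iota})^F|_{q\mapsto q^2}/|(T^{-\iota})^F|$ coming from $Z_H(T^{-\iota})\cong\prod_i\mathrm{Res}_{\mathbf{F}_{q^{k_i}}/\mathbf{F}_q}\SL_2$ -- is the correct way to fill in that one-liner, and it is legitimate to take characteristic functions there because Theorem \ref{thm: DL sheaf iso} asserts the second isomorphism is defined over $\mathbf{F}_q$. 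For the second equality the routes genuinely differ: the paper simply cites \cite[Proposition 6.9]{H01t} (equivalently Theorem 5.3.2 of \cite{BKS90}), whereas you reassemble it internally from Proposition \ref{prop: Ind is same}, the group-case identification of $\chi_{K^{L_0}_{(T^{-\iota},\mathcal{L}_\theta)}}$ with a Deligne--Lusztig character, and Proposition \ref{prop: ind for L_0}. Since Proposition \ref{prop: ind for L_0} is itself imported from \cite{BKS90}, your argument uses the same external input, just routed through the function-level induction rather than quoted wholesale; what it buys is an explicit consistency check between the sheaf-level and function-level statements. The one loose end you flag -- the sign -- does close: the group-case analogue (for the symmetric space $\GL_n\times\GL_n/\GL_n$) gives $\chi_{K^{L_0}_{(T^{-\iota},\mathcal{L}_\theta)}}=(-1)^n\zeta_{T}^{\GL_n}(\cdot|\theta)$, and combined with $(-1)^{\dim U_0}=(-1)^{n^2}$ the total sign is $(-1)^{n+n^2}=(-1)^{n(n+1)}=1$, so no extra sign survives. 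You should state that group-case normalization explicitly (it is the content of \cite[Proposition 9.2]{L90} as used later in the paper) rather than leaving it conditional, but there is no gap in the argument.
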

The first equality follows from the previous theorem by taking characteristic functions and the second equality follows from \cite[Proposition 6.9]{H01t} or Theorem 5.3.2 in \cite{BKS90}. As a corollary, this also shows that the $\chi_{K_{(T^{-\iota},\mathcal{L}_\theta)}}$ form a basis for the space of bi-invariant functions on $G^F$, since the basic functions do.

This proposition give a way to define induction for any bi-invariant function. Define
\begin{equation*}
    \Ind_{L\subseteq P}^G\chi_{\Ind_{T\subseteq B}^L(\mathcal{L})}=\chi_{\Ind_{T\subseteq B'}^G(\mathcal{L})}
\end{equation*}
where $B'$ is any Borel subgroup with Levi factor $T$ in $G$ and extend by linearity. Since the $\chi_{\Ind_{T\subseteq B}^L(\mathcal{L})}$ form a basis for the $H_L^F$ bi-invariant functions on $L^F$, this is well-defined. 
In the case that the parabolic subgroup $P$ is rational, the induction functor actually gives an $\mathbf{F}_q$ structure on $\Ind_{L\subseteq P}^G(K)$ if $K$ has one. 

Finally, it is shown that the $\mathbf{F}_q$-structure coming from induction through a rational parabolic subgroup agrees with the canonical one when applied to $K_{(T^{-\iota},\mathcal{L})}$.

\begin{proposition}
Let $L$ be a rational $\iota$-stable Levi subgroup and $P$ a rational $\iota$-stable parabolic with $L$ as its Levi factor. Let $T^{-\iota}$ be a rational maximal $\iota$-split torus and an $F$-stable tame rank one local system, and $\theta$ the character of $T^{-\iota}$ associated to $\mathcal{L}$. Then if $\Ind_{T\subseteq B}^L(\mathcal{L})$ and $\Ind_{T\subseteq B'}^G(\mathcal{L})$ are given their canonical rational structures,
\begin{equation*}
    \Ind_{L\subseteq P}^G(\Ind_{T\subseteq B}^L(\mathcal{L}))= \Ind_{T\subseteq B'}^G(\mathcal{L})
\end{equation*}
where $B$ and $B'$ are any $\iota$-stable Borel subgroups and this is an isomorphism over $\mathbf{F}_q$.
\end{proposition}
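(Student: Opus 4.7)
The plan is to combine the transitivity of bi-invariant parabolic induction for sheaves (Proposition~\ref{prop: transitivity of induction}) with the intersection-cohomology description of the canonical rational structure (Theorem~\ref{thm: DL sheaf iso}) so as to reduce to the classical transitivity of ordinary parabolic induction of character sheaves on $\GL_n$, which is well known to hold over $\mathbf{F}_q$.

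First I would arrange compatible choices of rational $\iota$-stable parabolics. Since all rational $\iota$-stable pairs $(T,B)$ are $H^F$-conjugate and bi-invariant induction is invariant under $H^F$-conjugation, I may choose the rational $\iota$-stable Levi $L_0\cong \GL_n\times \GL_n$ and rational $\iota$-stable parabolic $P_0$ used to pin down the canonical structure on $\Ind_{T\subseteq B'}^G(\mathcal{L})$ via Theorem~\ref{thm: DL sheaf iso} so that $P$ and $P_0$ contain a common rational $\iota$-stable Borel. With this setup, $L\cap L_0$ is a rational $\iota$-stable Levi of both $L$ and $L_0$, $L\cap P_0$ is a rational $\iota$-stable parabolic of $L$ with Levi factor $L\cap L_0$, and the subgroup $Q:=(L\cap L_0)(L\cap U_{P_0})U_P$ is a rational $\iota$-stable parabolic of $G$ with Levi factor $L\cap L_0$ satisfying both $Q\cap L=L\cap P_0$ and $Q\subseteq P_0$. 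The latter inclusion is a direct check using the symmetric block structure of $L$ relative to $L_0$; concretely, $U_P$ is strictly above the $L$-blocks, which are nested inside the two $(n,n)$-blocks of $L_0$, so $U_P$ has vanishing lower-left $n\times n$ block.

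Next I would apply Theorem~\ref{thm: DL sheaf iso} inside $L$ to write the canonical rational structure on $\Ind_{T\subseteq B}^L(\mathcal{L})$ as
\[
\Ind_{T\subseteq B}^L(\mathcal{L}) \;\cong\; \Ind_{L\cap L_0 \subseteq L\cap P_0}^L\bigl(K^{L\cap L_0}_{(T^{-\iota},\mathcal{L})}\bigr)
\]
over $\mathbf{F}_q$, apply $\Ind_{L\subseteq P}^G$, and invoke Proposition~\ref{prop: transitivity of induction} twice: once for $L\cap L_0\subseteq L$ in $G$ through $Q\subseteq P$, and once for $L\cap L_0\subseteq L_0$ in $G$ through $Q\cap L_0\subseteq P_0$. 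Since all parabolics in sight are rational, both applications hold over $\mathbf{F}_q$ and produce
\[
\Ind_{L\subseteq P}^G\bigl(\Ind_{T\subseteq B}^L(\mathcal{L})\bigr) \;\cong\; \Ind_{L_0\subseteq P_0}^G\Bigl(\Ind_{L\cap L_0\subseteq Q\cap L_0}^{L_0}\bigl(K^{L\cap L_0}_{(T^{-\iota},\mathcal{L})}\bigr)\Bigr).
\]

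The last step is to identify $\Ind_{L\cap L_0\subseteq Q\cap L_0}^{L_0}\bigl(K^{L\cap L_0}_{(T^{-\iota},\mathcal{L})}\bigr)$ with $K^{L_0}_{(T^{-\iota},\mathcal{L})}$ canonically over $\mathbf{F}_q$. Under the isomorphism $L_0/H_{L_0}\cong \GL_n$ from Section~\ref{sec: induction for functions}, bi-invariant induction inside $L_0$ corresponds to ordinary parabolic induction on $\GL_n$, and the complexes $K^{M}_{(T^{-\iota},\mathcal{L})}$ become the usual Lusztig character sheaves on the corresponding Levi. The desired identification is then the standard transitivity of parabolic induction of character sheaves on $\GL_n$ from a maximal torus through a chain of Levis, which is canonically defined over $\mathbf{F}_q$. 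Applying Theorem~\ref{thm: DL sheaf iso} once more to $G$ then finishes the proof. The main obstacle is the bookkeeping needed to arrange the rational $\iota$-stable parabolics $P$, $P_0$, and $Q$ compatibly so that both applications of transitivity go through over $\mathbf{F}_q$, and to verify the block-structure inclusion $Q\subseteq P_0$; once this is in place, the remaining identification reduces cleanly to the well-known $\GL_n$ case.
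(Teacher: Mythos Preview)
Your proposal is correct and follows essentially the same strategy as the paper's proof: reduce via transitivity through the intermediate Levi $L\cap L_0$ to the known cases of Theorem~\ref{thm: DL sheaf iso} (for the $\GL_{2n_0}/\Sp_{2n_0}$ factor) and classical parabolic induction of character sheaves on $\GL_n$ (for the $\GL_m\times\GL_m/\GL_m$ factors), using throughout that transitivity of induction is defined over $\mathbf{F}_q$ when all parabolics are rational. The paper organizes the bookkeeping slightly differently---it first explicitly reduces to a maximal $\iota$-stable Levi $L\cong\GL_{2n_0}\times\GL_m\times\GL_m$ and then runs the chain entirely in terms of $\Ind_{T\subseteq B''}^{L_0\cap L}(\mathcal{L})$ rather than the intersection-cohomology complexes $K^{L\cap L_0}_{(T^{-\iota},\mathcal{L})}$---but the content is the same, and your more explicit construction of the parabolic $Q$ plays the role of the paper's unspecified $P'$.
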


\begin{proof}
First, note that it suffices to prove the proposition for Levi subgroups of the form $L\cong \GL_{2n}\times\GL_m\times \GL_m$ where $H_L\cong \Sp_{2n}\times \GL_m$. This is because all $\iota$-stable Levi subgroups are of the form $\GL_{2n}\times \prod \GL_{n_i}\times\GL_{n_i}$ and $H_L$ are of the form $\Sp_{2n}\times \prod \GL_{n_i}$ (see \cite[5.6]{SS13} for example) and all factors except the $\GL_{2n}/\Sp_{2n}$ factor are isomorphic to the symmetric space $\GL_n\times \GL_n/\GL_n\cong \GL_n$, where the proposition is already known (the induction operation is equivalent to Deligne-Lusztig induction up to the twist and $\chi_{K_{(T,\mathcal{L})}}$ is simply a Deligne-Lusztig character \cite[Proposition 9.2]{L90}).

Now let $L_0\subseteq G$ be a subgroup of the form $\GL_{n+m}\times \GL_{n+m}$ with $L_0\cap L\subseteq L$ a subgroup of the form $\GL_n\times \GL_n\times \GL_m\times \GL_m$ with $H_{L_0}\cong \GL_n\times \GL_m$. Then the proposition for $L_0$ is already given by Theorem \ref{thm: DL sheaf iso}.

It's clear from the proof of Proposition \ref{prop: transitivity of induction} that if all Levi and parabolic subgroups involved are rational, the isomorphism is defined over $\mathbf{F}_q$ (the isomorphism is constructed by chasing a diagram which will contain only varieties defined over $\mathbf{F}_q$) and so this proposition can be reduced to the known case as
\begin{equation*}
\begin{split}
    \Ind_{L\subseteq P}^G(\Ind_{T\subseteq B}^L(\mathcal{L}))&\cong \Ind_{L_0\cap L\subseteq P'}^G(\Ind_{T\subseteq B''}^{L_0\cap L}(\mathcal{L}))
    \\&\cong \Ind_{L_0\subseteq P_0}^G(\Ind_{T\subseteq B''}^{L_0}(\mathcal{L}))
    \\&\cong\Ind_{T\subseteq B'}^G(\mathcal{L})
\end{split}
\end{equation*}
where $P'$ is a rational $\iota$-stable parabolic contained in $P$ and $P_0$ is a rational $\iota$-stable parabolic chosen to contain $P'$ (and all isomorphisms are over $\mathbf{F}_q$).
\end{proof}

The key result of Section \ref{sec: induction for functions}, Proposition \ref{prop:ind for functions}, follows by taking characteristic functions of both sides.

\section*{Acknowledgements}
This research was supported in part by NSERC. The author would like to thank Anthony Henderson for some comments on an earlier draft and for pointing out some references, Arun Ram and Dan Bump for some insightful suggestions and Cheng-Chiang Tsai for clarifying some aspects of character sheaves, as well as Persi Diaconis, and Aaron Landesman for helpful discussions.

\bibliography{bibliography}{}
\bibliographystyle{amsplain}

\end{document}